\documentclass[11pt]{article}

\usepackage{epsfig,epsf,fancybox}
\usepackage{amsmath}
\usepackage{mathrsfs}
\usepackage{amssymb}
\usepackage{graphicx}
\usepackage{color}
\usepackage{multirow}
\usepackage{paralist}
\usepackage{verbatim}
\usepackage{galois}
\usepackage{algorithm}
\usepackage[noend]{algorithmic}
\usepackage{boxedminipage}
\usepackage{booktabs}
\usepackage{accents}
\usepackage{stmaryrd}
\usepackage[table]{xcolor}
\usepackage{hhline}

\usepackage{subfig}

\usepackage{natbib}

\usepackage{url}
\usepackage[colorlinks,linkcolor=magenta,citecolor=blue, pagebackref=true,backref=true]{hyperref}
\renewcommand*{\backrefalt}[4]{%
    \ifcase #1 \footnotesize{(Not cited.)}%
    \or        \footnotesize{(Cited on page~#2.)}%
    \else      \footnotesize{(Cited on pages~#2.)}%
    \fi}

\newtheorem{theorem}{Theorem}[section]

\newtheorem{lemma}[theorem]{Lemma}
\newtheorem{proposition}[theorem]{Proposition}

\newtheorem{assumption}[theorem]{Assumption}

\numberwithin{equation}{section}

\newcommand{\EE}{\mathbb{E}}

\newcommand{\argmin}{\mathop{\rm argmin}}

\newcommand{\br}{\mathbb{R}}

\newcommand{\ba}{\begin{array}}
\newcommand{\ea}{\end{array}}

\begin{document}

\begin{center}

{\bf{\LARGE{Scale-Invariant Neural Network Optimization: \\ [.2cm] Norm Geometry and Heavy-Tailed Noise}}}

\vspace*{.2in}
{\large{
\begin{tabular}{c}
Jiayu Zhang \and Tianyi Lin \\
\end{tabular}
}}

\vspace*{.2in}

\begin{tabular}{c}
Department of Industrial Engineering and Operations Research \\ 
Columbia University
\end{tabular}

\vspace*{.2in}

\today

\vspace*{.2in}

\begin{abstract}
A growing lesson from neural network optimization is that optimizer design should respect how the model is parametrized. The layerwise input-output structure of neural networks motivates scale-invariant optimizers, such as Muon and Scion, whose updates also support hyperparameter transfer. At the same time, stochastic gradient noise in deep learning is often far from sub-Gaussian and may exhibit heavy tails. These crucial observations have shaped recent algorithmic principles for training neural networks, yet their joint theoretical consequences are underexplored. In particular, it remains unclear what dimension dependence is unavoidable for gradient-based methods given the problem class is defined by input-output norm and under heavy-tailed noise, and whether higher-order smoothness can accelerate training. We study these questions through nonconvex smooth stochastic optimization over $\br^{m\times n}$ equipped with general norms and under $p^\textnormal{th}$-moment heavy-tailed noise, where the goal is to achieve an $\epsilon$-stationary point measured in the dual norm. Our first contribution is a \emph{dimension-dependent} lower bound: when $\frac{\max\{m,n\}}{(\min\{m,n\})^2}$ is large enough, any gradient-based method requires $\Omega(\min\{m, n\}\epsilon^{-\frac{3p-2}{p-1}})$ oracle calls for the problem class defined by the spectral norm, which is a common input-output matrix norm. We prove that a \emph{scale-invariant} batched Scion method with the spectral norm can achieve the matching upper bound of $O(\min\{m, n\}\epsilon^{-\frac{3p-2}{p-1}})$. To exploit higher-order smoothness, we propose a transported Scion method and improve the bound to $O(\min\{m, n\}\epsilon^{-\frac{5p-3}{2p-2}})$ when the norm is spectral and the Hessian is Lipschitz. Finally, we incorporate practical heuristics into our transported method and evaluate it across multiple architectures and model sizes, demonstrating its flexibility and compatibility with neural network training.
\end{abstract}
\end{center}

\section{Introduction}\label{sec:intro}
Neural networks have evolved from multilayer perceptrons (MLP) trained by backpropagation~\citep{Rumelhart-1986-Learning} into a dominant paradigm of modern AI. Key milestones include mixtures of experts (MoE)~\citep{Jacobs-1991-Adaptive}, convolutional neural networks (CNN) for vision~\citep{Lecun-1998-Gradient}, recurrent neural networks (RNN) for sequential data~\citep{Hochreiter-1997-Long, Cho-2014-Learning}, and deep belief networks (DBN) that helped revive interest in deep architectures~\citep{Hinton-2006-Fast}. Since the breakthrough of AlexNet on ImageNet~\citep{Krizhevsky-2012-Imagenet}, deep networks have transformed vision, speech, language and generative AI, with residual networks (ResNet) enabling much deeper models~\citep{He-2016-Deep} and transformers enabling scalable sequence modeling~\citep{Vaswani-2017-Attention}. This progress has relied not only on data and hardware, but on nonconvex stochastic optimization methods, from SGD and momentum~\citep{Robbins-1951-Stochastic, Polyak-1964-Some, Nesterov-1983-Method, Sutskever-2013-Importance} to adaptive methods such as AdaGrad, RMSProp, Adam, and AdamW \citep{Duchi-2011-Adaptive, Tieleman-2012-Lecture, Kingma-2015-Adam, Loshchilov-2019-Decoupled}. Yet, these methods are designed around Euclidean and coordinatewise parameter geometries and standard stochastic gradient noise, rather than \emph{layerwise, heterogeneous geometry} and \emph{heavy-tailed stochastic gradient noise} observed in training modern neural networks~\citep{Glorot-2010-Understanding, Ioffe-2015-Batch, Simsekli-2019-Tail}.

Recent norm-based matrix optimizers have addressed this mismatch by scaling updates according to layerwise matrix geometry. The guiding principle is simple: for a neural-network layer, the update scale should reflect how the layer transforms its inputs, rather than only the Euclidean length of all entries in its weight matrix. Muon implements this idea through matrix-sign updates~\citep{Carlson-2015-Preconditioned, Jordan-2024-Muon, Bernstein-2024-Old}, while Scion provides a broader framework based on linear minimization oracles over input-output matrix norm balls~\citep{Pethick-2025-Training}. Thus, the use of input-output non-Euclidean norms is not merely a theoretical abstraction but is already present in modern optimizer design. These methods have also been connected to architecture-aware scaling, modular norm viewpoints, and hyperparameter transfer principles~\citep{Large-2024-Scalable, Bernstein-2025-Modular, Ioffe-2015-Batch, Ba-2016-Layer, Yang-2021-Feature, Yang-2021-Tuning, Yang-2023-Spectral}. Compared with coordinatewise rescaling used in Adam and its variants, Muon and Scion make update magnitudes more comparable across heterogeneous layers through scale-invariant updates and have been recognized in large-scale neural network training~\citep{Liu-2025-Muon, Team-2025-Kimi}. This yields an important theoretical question: when the stochastic optimization problem is formulated in the input-output norm geometry and with heavy-tailed noise, what is the optimal convergence rate for gradient-based methods, and can Muon or its variants attain this rate?

A line of work~\citep{Zhang-2020-Adaptive, Cutkosky-2021-High, Hubler-2025-Gradient, Sun-2025-Revisiting, Chezhegov-2025-Clipping} has shown that Frobenius-norm-based matrix optimizers equipped with clipping and normalization are effective when stochastic gradients are heavy-tailed, while~\citet{Liu-2025-Nonconvex} demonstrated that only normalization is sufficient.~\citet{Yu-2026-Sign} have recently proved guarantees for spectral-norm-based matrix optimizers, including Muon, under optimizer-specific noise and smoothness. However, the more natural problem class for spectral-norm-based matrix optimizers should be defined by the spectral norm and its dual norm under heavy-tailed noise~\citep{Bernstein-2025-Modular}. To our knowledge, it remains unclear what dimension dependence is unavoidable for gradient-based methods given this problem class, and whether higher-order smoothness can accelerate training.

In this paper, we study the above questions through nonconvex smooth stochastic optimization over $\br^{m\times n}$ with general norms under $p^\textnormal{th}$-moment heavy-tailed noise. Indeed, the general-norm formulation captures the geometry used by modern neural network optimizers, while the heavy-tailed noise model reflects the non-Gaussian stochasticity observed in training. Our analysis has three parts. For the lower bound, we combine the framework of~\citep{Arjevani-2023-Lower} and the hard instance of~\citep{Liu-2025-Nonconvex} by embedding independent hard signals across rows, showing that the dimension dependence is unavoidable. Our lower bound applies to all gradient-based methods given that the problem class is defined by spectral and nuclear norms and under $p^\textnormal{th}$-moment heavy-tailed noise in nuclear norm. We then show that Scion~\citep{Pethick-2025-Training} with batched momentum and scale-invariant updates matches the lower bound. The key ingredient in our proof is to control the momentum error in the dual norm before applying the linear minimization oracle. We further adapt the implicit gradient transport mechanism~\citep{Cutkosky-2020-Momentum} to accelerate Scion and its variants. The analysis is nontrivial since scale-invariant updates are \textit{nonlinear}, discard gradient magnitudes, and must be controlled in \textit{non-Euclidean dual norms} under heavy-tailed noise. 

\paragraph{Contributions.} We focus on two central questions: (i) what dimension dependence is unavoidable in stochastic nonconvex matrix optimization beyond Frobenius geometry, and (ii) whether higher-order smoothness can be leveraged to accelerate Scion methods under heavy-tailed noise. Our contributions can be summarized as follows. 
\begin{enumerate}
\item We establish the sharp dimension dependence for stochastic nonconvex matrix optimization in spectral-norm geometry and under $p^{\textnormal{th}}$-moment heavy-tailed noise. In the space $(\br^{m\times n},\|\cdot\|_{\mathrm{op}})$, we show that when $\frac{\max\{m,n\}}{(\min\{m,n\})^2}$ is large enough, any gradient-based method requires $\Omega(\min\{m, n\}\epsilon^{-\frac{3p-2}{p-1}})$ oracles to find an $\epsilon$-stationary point. We then show that a batched Scion method achieves the matching upper bound of $O(\min\{m, n\}\epsilon^{-\frac{3p-2}{p-1}})$. 
\item We propose a transported Scion method that can leverage higher-order smoothness and prove an improved bound of $O(\min\{m, n\}\epsilon^{-\frac{5p-3}{2p-2}})$ when the norm is spectral and the Hessian is Lipschitz. We incorporate practical heuristics into our transported method and use it to train CNNs and transformers, showing the flexibility and compatibility of the transportation technique with neural network training.
\end{enumerate}

\paragraph{Related work.} Our work is most closely related to the literature on neural network optimization methods and optimization under heavy-tailed noise. Due to space limitations, we defer our comments on other relevant topics to Appendix~\ref{app:additional}. Earlier matrix optimizers exploit layerwise matrix structure through spectral or Kronecker-factored preconditioning~\citep{Carlson-2015-Preconditioned, Martens-2015-Optimizing, Grosse-2016-Kronecker, Gupta-2018-Shampoo, Goldfarb-2020-Practical, Ren-2021-Tensor, Duvvuri-2024-Combining, Zhao-2024-Galore, Morwani-2025-Perspective, Vyas-2025-Soap, Yuan-2025-Mars, An-2025-Asgo}. A recent line of work designs optimizers based on the spectral norm: Muon updates the weights using the matrix sign of layerwise gradients or momentum, typically via Newton-Schulz iterations, while Scion casts the updates as linear minimization oracles over spectral-norm balls~\citep{Jordan-2024-Muon, Pethick-2025-Training}. This viewpoint has led to a growing family of matrix optimizers~\citep{Liu-2025-Muon, Li-2025-Normuon, Riabinin-2025-Gluon, Ahn-2025-Dion, Ahn-2025-Dion2, Lau-2025-Polargrad, He-2025-Low, Huang-2025-Limuon, Page-2025-Muonall, Xu-2026-Fismo, Gu-2026-Mano, Gong-2026-Aro, Zhang-2026-Adam, Du-2026-Newton, Li-2026-Intrinsic, Shumaylov-2026-Muon} and faster matrix-sign routines~\citep{Amsel-2026-Polar, Zhang-2026-Gram}.  Despite limited existing work analyzing Muon using spectral-norm smoothness and dual-norm stationarity~\citep{Li-2025-Note, Riabinin-2025-Gluon}, it remains unclear whether Muon's convergence rate is optimal in spectral-norm geometry. 

Empirical studies have found heavy-tailed stochastic gradient noise in training neural networks and language models~\citep{Simsekli-2019-Tail, Zhang-2020-Adaptive, Gurbuzbalaban-2021-Heavy, Kunstner-2024-Heavy, Kunstner-2025-Scaling}, motivating updates that reduce sensitivity to raw gradient magnitudes.~\citet{Liu-2025-Nonconvex} show that normalized SGD with momentum attains the optimal heavy-tailed nonconvex rate without clipping. When applied to matrix-valued parameters, their approach gives Frobenius normalization, which is different from Muon. More recently,~\citet{Yu-2026-Sign} establish guarantees for Muon under heavy-tailed, optimizer-specific noise and smoothness models. \citet{Choudhury-2026-Muon} prove rates for Muon under Frobenius heavy-tailed noise and smoothness conditions. In contrast, our work gives upper bounds for Muon under general-norm smoothness and a $p^\textnormal{th}$-moment noise condition in the dual norm. The resulting factor can be dimension dependent, and we show that this dependence is unavoidable for spectral-norm geometry. Our dimension-dependent lower bound does not contradict the works mentioned above since our assumptions are different. 

\section{Preliminaries and Technical Background}\label{sec:prelim}
We provide an overview of scale-invariant Scion methods and their norm geometry in the context of neural network optimization. We then present the formal definitions of the function classes and heavy-tailed noise models considered in this paper. 
\subsection{Scale-invariant methods and norm geometry}
Throughout this paper, we equip $\br^{m\times n}$ with a general norm $\|\cdot\|$ and denote its dual norm by $\|\cdot\|_\star$. We assume $F^\star := \inf_{X \in \br^{m\times n}} F(X) > -\infty$ and consider
\begin{equation*}
\min_{X \in \br^{m\times n}} F(X). 
\end{equation*}
The key component of scale-invariant methods is a linear minimization oracle ($\operatorname{LMO}$) over the unit norm ball, defined as
\begin{equation*}
\operatorname{lmo}(S) \in \argmin_{\|X\|\le 1}\langle S,X\rangle .
\end{equation*}
By definition, we have $\|\operatorname{lmo}(S)\| \leq 1$ and $
\langle S,\operatorname{lmo}(S)\rangle=-\|S\|_\star$. Thus, this oracle is scale invariant:
$\operatorname{lmo}(\alpha S)=\operatorname{lmo}(S)$ for all $\alpha>0$, up to the choice
of the minimizer. An update $X^+=X+\eta\operatorname{lmo}(S)$ using $\operatorname{lmo}(S)$ as the direction therefore fixes the step length in the chosen geometry.

In practice, Scion methods choose the norm ball layerwise using input-output matrix norms for neural network optimization~\citep{Pethick-2025-Training}. In particular, for a weight matrix $W \in \br^{d_{\mathrm{out}}\times d_{\mathrm{in}}}$ and vector norms $\|\cdot\|_\alpha,\|\cdot\|_\beta$, we define 
\begin{equation*}
\|W\|_{\alpha \to \beta} := \sup_{\|z\|_\alpha\le 1}\|Wz\|_\beta .
\end{equation*}
When the layer input is bounded in $\|\cdot\|_\alpha$, the $\|\cdot\|_\beta$ norm of the output can be bounded via $\|W\|_{\alpha\to\beta}$. This input-output interpretation explains why general matrix norms are used in scale-invariant neural network optimizers. Frobenius normalization views $W$ as a vector in Euclidean space. However, it does not measure the layer map $z \mapsto Wz$ through an input-output matrix norm of the form $\|\cdot\|_{\alpha \to \beta}$.

For hidden layers, a common choice of input-output norm is the RMS-to-RMS norm, where $\|z\|_{\mathrm{RMS}}=\frac{1}{\sqrt{d}}\|z\|_2$ for $z \in \br^d$. The corresponding matrix norm is
\begin{equation*}
\|W\|_{\mathrm{RMS}\to\mathrm{RMS}} = \sqrt{\tfrac{d_{\mathrm{in}}}{d_{\mathrm{out}}}}\,
\|W\|_{\mathrm{op}}.
\end{equation*}
Let $S = U\Sigma V^\top$ be an SVD of $S\in \br^{d_{\mathrm{out}}\times d_{\mathrm{in}}}$. We have
\begin{equation*}
\operatorname{lmo}_{\mathrm{RMS}\to\mathrm{RMS}}(S) = -\sqrt{\tfrac{d_{\mathrm{out}}}{d_{\mathrm{in}}}}UV^\top,
\end{equation*}
which is the scaled matrix-sign update used by Muon and its variants~\citep{Jordan-2024-Muon, Bernstein-2024-Old, Liu-2025-Muon, Li-2025-Normuon}. Other normalized updates in neural network optimization can be recovered by different choices of the input-output norm: the $1 \to \mathrm{RMS}$ norm gives column-normalized updates, the $\mathrm{RMS} \to \infty$ norm gives row-normalized updates, and the $1 \to \infty$ norm gives sign updates.

For a feed-forward network with weights and biases $\Theta=(W_1,b_1,\ldots,W_L,b_L)$, we define the norm of the network parameter $\Theta$ as follows:
\begin{equation*}
\|\Theta\| := \max_{\ell\in[L]} \ \tfrac{1}{\rho_\ell}\max\{\|W_\ell\|_{\alpha_\ell \to \beta_\ell}, \|b_\ell\|_{\beta_\ell}\},
\end{equation*}
where $\rho_\ell>0$ is the layerwise radius. 
The LMO over the unit ball in the product space can be decomposed across layers, so each layer receives a radius-scaled normalized update in its own input-output geometry. For example, in the RMS-to-RMS case, the update has spectral norm of order $\sqrt{d_{\mathrm{out}}/d_{\mathrm{in}}}$, matching the scaling used for width-stable feature learning and hyperparameter transfer. Our theory keeps the norm abstract but focuses on one weight matrix for simplicity.

\subsection{Function class and heavy-tailed noise model}
We present definitions for generalized smooth functions and a $p^{\rm th}$-moment heavy-tailed noise model. 
\begin{assumption}\label{assumption:smooth}
There exist $L_0,L_1 \geq 0$ such that, for any $X,Y \in \br^{m\times n}$ satisfying $\|X-Y\| \leq \frac{1}{L_1}$, we have $\|\nabla F(Y)-\nabla F(X)\|_\star \leq (L_0+L_1\|\nabla F(X)\|_\star)\|Y-X\|$. Here, $\frac{1}{L_1}=+\infty$ if $L_1=0$. 
\end{assumption}
Assumption~\ref{assumption:smooth} recovers $L_0$-smoothness when $L_1=0$, while allowing the local smoothness scale to grow with the dual gradient norm $\|\nabla F(X)\|_\star$. This lets our analysis cover objectives with relaxed or unbounded smoothness while still retaining the descent inequality needed for normalized updates.

\begin{assumption}\label{assumption:noise}
There exists an oracle $G: \br^{m\times n} \times \Xi \to \br^{m\times n}$ such that, for every $X \in \br^{m\times n}$, we have $\EE[G(X,\xi) \mid X] = \nabla F(X)$ and $\EE[\|G(X,\xi)-\nabla F(X)\|_\star^p \mid X] \leq \sigma_0^p+\sigma_1^p\|\nabla F(X)\|_\star^p$ for some constants $\sigma_0,\sigma_1 \geq 0$ and some order $p \in (1,2]$. 
\end{assumption}
Assumption~\ref{assumption:noise} reduces to the classical finite $p^\textnormal{th}$-moment heavy-tailed noise model when $\sigma_1=0$, and the additional $\sigma_1$ term allows the noise scale to grow with the local gradient. In particular, for $p<2$ we do not assume bounded variance or bounded stochastic gradients. Given a query point $X$ and batch size $B$, we write
\begin{equation*}
\bar{G}_B(X):=\tfrac{1}{B}\textstyle\sum_{i=1}^B G(X,\xi^i),
\end{equation*}
where $\xi^1,\ldots,\xi^B$ are \emph{i.i.d.} samples. Thus, when an algorithm chooses $X_t$ from the past and then draws a mini-batch, Assumption~\ref{assumption:noise} implies the conditional unbiasedness, conditional independence, and conditional $p^\textnormal{th}$-moment bounds used in our analysis.

The key difference between our analysis and that of \citet{Liu-2025-Nonconvex} is that the momentum error is controlled in a general dual norm. To highlight this norm-dependent effect, we define
\begin{equation}\label{def:factor}
\tau(\|\cdot\|_\star,m,n,p):=\sup_{\{Z_t\}_{t=1}^T}\tfrac{\EE\|\sum_{t=1}^T Z_t\|_\star}{
\EE(\sum_{t=1}^T \|Z_t\|_\star^p)^{1/p}},
\end{equation}
where the supremum is over all $T \in \mathbb{N}$ and all integrable $\br^{m\times n}$-valued martingale difference sequences $\{Z_t\}_{t=1}^T$ with respect to their natural filtrations. The ratio is $0$ when the denominator is $0$. The following lemma records the dimension dependence of this martingale factor.
\begin{lemma} \label{lemma:dimension-upper-bound}
For any norm $\|\cdot\|_\star$ on $\br^{m\times n}$ and any $p \in (1,2]$, $\tau(\|\cdot\|_\star,m,n,p)$ is finite. In general, this factor can depend on $(m,n,p)$, e.g., $\tau(\|\cdot\|_{\rm nuc},m,n,p)=\Theta(\min\{m,n\}^{1-1/p})$.
\end{lemma}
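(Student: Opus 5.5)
Finiteness follows by comparing $\|\cdot\|_\star$ with the Frobenius norm and using the Euclidean case. Equivalence of norms on the finite-dimensional space $\br^{m\times n}$ gives constants $c_1,c_2>0$ with $c_1\|Z\|_F\le\|Z\|_\star\le c_2\|Z\|_F$.

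\textbf{Finiteness for the Frobenius norm.} Here we use the standard martingale inequality in Hilbert space, which is the vector-valued von Bahr--Esseen / Burkholder inequality for $p\in(1,2]$:
\begin{equation*}
\EE\Bigl\|\textstyle\sum_t Z_t\Bigr\|_F\le C_p\,\EE\Bigl(\textstyle\sum_t\|Z_t\|_F^p\Bigr)^{1/p}.
\end{equation*}
We state it in the first-moment form. To get it, apply Burkholder--Davis--Gundy with $q=1$ in Hilbert space, $\EE\|\sum Z_t\|_F\lesssim\EE(\sum\|Z_t\|_F^2)^{1/2}$. Then use $\ell^p\subset\ell^2$, i.e.\ $(\sum a_t^2)^{1/2}\le(\sum a_t^p)^{1/p}$ for $p\le2$. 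Importantly, the constant does not depend on $m,n$ or $T$.

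\textbf{Transfer to a general norm.} Combining with the norm equivalence gives $\tau\le C\,c_2/c_1<\infty$, which proves the first claim.

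\textbf{Upper bound for the nuclear norm.} Write $k=\min\{m,n\}$. We need a sharper comparison than crude norm equivalence. The nuclear norm is dual to the operator norm, and the Schatten-$p$ class $S_p$ has martingale type $p$ with a dimension-free constant. This holds because $S_q$ is uniformly smooth for $q\ge2$ (Ball--Carlen--Lieb). Alternatively, it follows from the Hilbert-space bound together with $\|Z\|_{\rm nuc}\le k^{1/2}\|Z\|_F$, but that only yields $k^{1/2}$, which is not sharp for $p<2$.

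The cleaner route is interpolation through $S_r$ for a suitable $r$, chosen as follows:
\begin{itemize}
\item use $\|Z\|_{\rm nuc}\le k^{1-1/r}\|Z\|_{S_r}$;
\item use that $S_r$ has martingale type $p$ with constant of order $\sqrt{r'}$-ish;
\item use $\|Z_t\|_{S_r}\le\|Z_t\|_{\rm nuc}$.
\end{itemize}
Choosing $r=p$ is natural. By noncommutative Khintchine/Pisier, $S_p$ for $p\le 2$ has type $p$ (in fact martingale type $p$) with a constant depending only on $p$. This gives
\begin{equation*}
\EE\|\textstyle\sum Z_t\|_{\rm nuc}\le k^{1-1/p}\,\EE\|\sum Z_t\|_{S_p}\le C_p k^{1-1/p}\,\EE\bigl(\sum\|Z_t\|_{S_p}^p\bigr)^{1/p}\le C_pk^{1-1/p}\,\EE\bigl(\sum\|Z_t\|_{\rm nuc}^p\bigr)^{1/p}.
\end{equation*}

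\textbf{Matching lower bound.} Take $T=k$ and $Z_t=\varepsilon_t e_te_t^\top$ with independent Rademacher signs $\varepsilon_t$. Then $\|\sum Z_t\|_{\rm nuc}=k$, while $(\sum\|Z_t\|_{\rm nuc}^p)^{1/p}=k^{1/p}$. The ratio is $k^{1-1/p}$.

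\textbf{Main obstacle.} The hard step is justifying the dimension-free martingale-type-$p$ constant of $S_p$ (or of $S_r$) in the first-moment form. I would cite the Pisier--Xu noncommutative Burkholder inequalities, or uniform convexity results for Schatten classes. If these prove awkward, a fallback is to exploit the fact that $\tau$ is defined as a supremum with a first moment. One could then argue via the $\ell_p$-type of $S_p$ for $p\le2$, a classical result (Tomczak-Jaegermann), and reduce from martingales to independent sums through the Pisier martingale type theorem for uniformly convex spaces.
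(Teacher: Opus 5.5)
Your proposal is correct and follows essentially the paper's proof. For finiteness, the paper also compares $\|\cdot\|_\star$ with a Euclidean norm and then invokes a Hilbert-space martingale inequality; it uses John's theorem, which gives the explicit constant $2\sqrt{2mn}$ instead of your norm-dependent $c_2/c_1$. For the nuclear norm, it uses exactly your chain $\|\cdot\|_{\rm nuc}\le k^{1-1/p}\|\cdot\|_{S_p}$, then a dimension-free first-moment martingale type-$p$ inequality for $S_p$ (derived from $p$-uniform smoothness of $S_p$ via Pisier's book), then $\|\cdot\|_{S_p}\le\|\cdot\|_{\rm nuc}$, with the same diagonal Rademacher example for the lower bound.

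One correction to how you justify the key step. The property you need is $p$-uniform smoothness of $S_p$ itself for $1<p\le 2$, which is dual to $p'$-uniform convexity of $S_{p'}$. Uniform smoothness of $S_q$ for $q\ge 2$ does not give it: by duality it only says $S_p$ is $2$-uniformly convex, which is the wrong direction for a type estimate.
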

Lemma~\ref{lemma:dimension-upper-bound} identifies the theoretical difference between the Frobenius norm and general norms used by scale-invariant LMO methods. For the Frobenius norm, the factor $\tau$ is dimension-free, matching the setting in which a matrix parameter is treated as a vector~\citep{Liu-2025-Nonconvex}. For other matrix norms, the estimator error is measured in the corresponding dual norm, and this can introduce dimension dependence. For example, spectral-norm LMO updates require controlling the stochastic error in the nuclear norm, where $\tau(\|\cdot\|_{\rm nuc},m,n,p)=\Theta(\min\{m,n\}^{1-1/p})$. Thus, the dimension dependence studied in this paper is a consequence of combining heavy-tailed noise with general input-output matrix norm geometries used by neural network optimizers. The proof of Lemma~\ref{lemma:dimension-upper-bound} is deferred to Appendix~\ref{app:proofs}.
\begin{assumption}\label{assumption:hesslip}
There exists $L_2 \geq 0$ such that, for any $X,Y \in \br^{m\times n}$ satisfying $\|Y-X\|\leq \frac{1}{L_1}$, we have $\|\nabla F(Y)-\nabla F(X)-\nabla^2F(X)[Y-X]\|_{\star} \le L_2\|Y-X\|^2$. Here, $\frac{1}{L_1}=+\infty$ if $L_1=0$.
\end{assumption}
Assumption~\ref{assumption:hesslip} is implied by the standard Hessian Lipschitzness condition in Euclidean geometry. This additional higher-order smoothness assumption allows us to design accelerated algorithms.

\section{Main Results}\label{sec:results}
We establish a dimension-dependent lower bound for any stochastic first-order method under spectral-norm geometry and heavy-tailed noise. We then show that a batched Scion method achieves the matching upper bound. Finally, we introduce a transported Scion method and prove an improved rate under Hessian Lipschitzness. The lower bound is stated for the spectral-norm geometry, where stationarity is measured in the nuclear norm. The upper bounds are stated for a general norm $\|\cdot\|$ on $\br^{m\times n}$ with dual norm $\|\cdot\|_\star$. Throughout this section, when stating upper bounds, we write $\tau_\star := \tau(\|\cdot\|_\star,m,n,p)$ where $\tau(\cdot)$ is the martingale factor. For spectral-norm LMO updates, we have $\|\cdot\|=\|\cdot\|_{\mathrm{op}}$ and $\|\cdot\|_\star=\|\cdot\|_{\mathrm{nuc}}$. In this case, we have $\tau_\star=\Theta(\min\{m,n\}^{1-1/p})$. All proofs are deferred to Appendix~\ref{app:proofs}.

\subsection{Dimension-dependent lower bound}
\label{subsec:dimension-dependent-lower-bound}
We show that the dimension dependence induced by spectral geometry is unavoidable. Let $\Delta,L>0$. Define $\mathcal{F}_{\mathrm{op}}(m,n,\Delta,L)$ as the class of differentiable functions $F:\br^{m\times n} \to \br$ satisfying
\begin{equation*}
F(0)-\inf_{X \in \br^{m\times n}} F(X) \leq \Delta, \quad \|\nabla F(X)-\nabla F(Y)\|_{\mathrm{nuc}} \leq L\|X-Y\|_{\mathrm{op}} \textnormal{ for all } X,Y \in \br^{m\times n}.
\end{equation*}
Since we focus on a matrix space equipped with the spectral norm, the corresponding dual stationarity is measured in the nuclear norm.

We use the stochastic first-order oracle model~\citep{Arjevani-2023-Lower}. Each oracle $\mathsf{O}$ consists of a distribution $P_\xi$ on a measurable space $\Xi$ and a mapping $\mathsf{O}_F(X,\xi)=(F(X),g(X,\xi))$ such that, for every $F \in \mathcal{F}_{\mathrm{op}}(m,n,\Delta,L)$, we have
\begin{equation*}
\EE[g(X,\xi) \mid X] = \nabla F(X), \quad \EE[\|g(X,\xi)-\nabla F(X)\|_{\mathrm{nuc}}^p \mid X] \leq \sigma_0^p.
\end{equation*}
We denote the set of all such oracles by $\mathcal{O}_p(\sigma_0)$.

We use the randomized algorithm model~\citep{Arjevani-2023-Lower}. Let
$\mathcal{A}_{\rm rand}$ denote the class of randomized first-order algorithms using the oracle $\mathsf{O}_F$. At round $t$, an algorithm $A[\mathsf{O}_F] \in \mathcal{A}_{\rm rand}$ chooses a query point $X_{A[\mathsf{O}_F]}^{(t)}$ that is measurable with respect to its internal random seed and all previous oracle observations. The oracle then draws $\xi^{(t)} \sim P_\xi$ independently and returns $F(X_{A[\mathsf{O}_F]}^{(t)})$ and $g(X_{A[\mathsf{O}_F]}^{(t)},\xi^{(t)})$. 

Let $\mathcal{P}(\mathcal{F}_{\mathrm{op}}(m,n,\Delta,L))$ be the set of probability
measures over $\mathcal F_{\mathrm{op}}(m,n,\Delta,L)$. For any tolerance $\epsilon>0$ and $p \in (1,2]$, we define the worst-case number of oracle rounds needed to output an expected $\epsilon$-stationary point by
\begin{equation*}
\mathfrak{m}^{\mathrm{rand}}_{\epsilon,p}(m,n,\Delta,L,\sigma_0):=\sup_{\mathsf{O} \in \mathcal{O}_p(\sigma_0)} \sup_{P_F \in \mathcal{P}(\mathcal{F}_{\mathrm{op}}(m,n,\Delta,L))} \inf_{A \in \mathcal{A}_{\mathrm{rand}}} \inf\left\{N \mid \EE_{F\sim P_F, A[\mathsf{O}_F]} \|\nabla F(X_{A[\mathsf{O}_F]}^{(N)})\|_{\mathrm{nuc}} \le\epsilon \right\}.
\end{equation*}
The expectation is taken over the random problem instance, the internal randomness of the algorithm, and the stochastic oracle.

\begin{theorem}\label{thm:main}
For any $p\in(1,2]$, there exist constants $c_p,c'_p>0$, depending only on
$p$, such that the following holds. For any $m,n\ge 1$ and
$\Delta,L,\sigma_0>0$, if
\begin{equation*}
0<\epsilon\le c'_p\min\{\sqrt{\Delta L},\sigma_0\}
\end{equation*}
and the larger matrix dimension satisfies
\begin{equation*}
\max\{m,n\} = \widetilde{\Omega}\left(\tfrac{(\min\{m,n\}\Delta L)^2}{\epsilon^4}\left(\tfrac{\sigma_0}{\epsilon}\right)^{\frac{p}{p-1}}\right),
\end{equation*}
then we have
\begin{equation*}
\mathfrak{m}^{\mathrm{rand}}_{\epsilon,p}(m,n,\Delta,L,\sigma_0) \geq c_p\min\{m,n\}\Delta L \sigma_0^{\frac{p}{p-1}}\epsilon^{-\frac{3p-2}{p-1}}, 
\end{equation*}
where $\widetilde{\Omega}(\cdot)$ hides logarithmic factors.
\end{theorem}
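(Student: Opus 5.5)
We assume without loss of generality that $m \le n$, so $\min\{m,n\}=m$ and $\max\{m,n\}=n$; the other case follows by transposition. The construction embeds $m$ independent copies of the one-dimensional heavy-tailed hard instance of \citet{Liu-2025-Nonconvex}, which is built on the zero-chain of \citet{Arjevani-2023-Lower}, one copy per row. Concretely, we split the $n$ columns of row $i$ into a block of length $d$ used by a chain $\bar F_T:\br^d\to\br$. We set
\begin{equation*}
F(X)=\sum_{i=1}^m \lambda\,\bar F_T\big(U_i^\top X_{i,:}^\top/\beta\big),
\end{equation*}
where each $U_i\in\br^{n\times d}$ is an independent uniformly random orthogonal embedding. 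We include these random rotations, following the randomized-algorithm reduction in \citet{Arjevani-2023-Lower}, so that algorithms cannot guess directions; this is also why $n$ must be huge, namely $n\gtrsim$ (chain length)$\times$(union bound over queries), which produces the $\widetilde\Omega$ condition on $\max\{m,n\}$. The stochastic oracle on each row is the heavy-tailed probabilistic zero-chain oracle: it reveals the next coordinate of the chain with probability $q$ and rescales by $1/q$, where $q\asymp(\epsilon/\sigma_0)^{p/(p-1)}$ is chosen so that the $p^{\text{th}}$ moment stays within $\sigma_0^p$.

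The first step is to check membership in $\mathcal F_{\mathrm{op}}(m,n,\Delta,L)$ and in $\mathcal O_p(\sigma_0)$, tracking how the norms interact with the row structure. The gradient is a matrix $G$ whose $i$-th row $g_i$ is supported in $\mathrm{range}(U_i)$.

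For smoothness, we use $\|G\|_{\mathrm{nuc}}\le\sum_i\|g_i\|_2$ together with $\|\Delta X_{i,:}\|_2\le\|\Delta X\|_{\mathrm{op}}$. Each row's gradient is then $(\lambda\ell_1/\beta^2)$-Lipschitz in $\|\cdot\|_{\mathrm{op}}$, so the total nuclear-norm Lipschitz constant is $m\lambda\ell_1/\beta^2\le L$.

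The function gap is $m\lambda\Delta_0 T\le\Delta$.

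For stationarity we need the reverse inequality: if the rows $g_i$ lie in nearly orthogonal subspaces (the $U_i$ are random in a huge ambient dimension), then $\|G\|_{\mathrm{nuc}}\gtrsim\sum_i\|g_i\|_2$. More simply, the rows could be placed in disjoint column blocks, giving exact orthogonality, so that $\|G\|_{\mathrm{nuc}}\ge\|G\|_F$. To obtain the full sum rather than the Frobenius norm, we instead make the rows' supports pairwise orthogonal in both row and column space; then $G$ is block diagonal up to permutation and $\|G\|_{\mathrm{nuc}}=\sum_i\|g_i\|_2$ exactly. Non-stationarity of each unfinished chain then gives $\|g_i\|_2\ge\lambda/\beta$ on every row that has not been fully discovered.

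For the noise, rows share a single oracle draw $\xi$. The nuclear-norm deviation is at most $\sum_i\|\text{noise}_i\|_2$. If we use the same Bernoulli variable across all rows, the $p^{\text{th}}$ moment scales like $(m\lambda/\beta)^p q^{1-p}$, and we require this to be at most $\sigma_0^p$.

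Next we set parameters. Stationarity needs $m\lambda/\beta\asymp\epsilon$ (taking the expectation over the fraction of unfinished rows). The noise constraint then gives $q\asymp(\epsilon/\sigma_0)^{p/(p-1)}$. Smoothness gives $m\lambda/\beta^2\asymp L$, so $\beta\asymp\epsilon/L$ and $\lambda\asymp\epsilon^2/(mL)$. The gap constraint gives $T\asymp\Delta/(m\lambda)\asymp\Delta L/\epsilon^2$.

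The crux is the count of oracle rounds. Because a single oracle call reveals progress on each row independently with probability $q$ (we use independent Bernoullis per row, and the moment bound for the sum of row noises still holds by Lemma~\ref{lemma:dimension-upper-bound}-type reasoning, or crudely by the triangle inequality, at cost absorbed in $\epsilon$), every row must be advanced $T$ times, and each row needs $\approx T/q$ calls. Since calls are shared, this only yields $T/q$ total, with no factor of $m$.

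The main obstacle is therefore to recover the extra factor $m$, and the resolution must come from the per-row scaling. For the nuclear norm to be at most $\epsilon$, a constant fraction of the rows must be finished. The per-row noise budget is then $\|\text{noise}_i\|\lesssim\sigma_0/m$, since noises add across rows in nuclear norm, while each row's gradient scale is $\epsilon/m$. The ratio is unchanged, so $q$ is unchanged; but the per-row smoothness and gap budgets are $L/m$... I would redo the balance with per-row parameters $(\Delta/m, L, \epsilon/m, \sigma_0/m)$. Here the row Lipschitz constant stays $L$ because the sum over rows of Lipschitz constants is bounded by $\max$ when the row supports are orthogonal. The one-dimensional bound $\Delta L\sigma^{p/(p-1)}\epsilon^{-(3p-2)/(p-1)}$ evaluated at these parameters yields $(\Delta/m)L\,m^{(3p-2)/(p-1)-p/(p-1)}\cdots=m\,\Delta L\sigma_0^{p/(p-1)}\epsilon^{-(3p-2)/(p-1)}$, which is the claimed bound.

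I expect the delicate point to be justifying that the Lipschitz constant in $\|\cdot\|_{\mathrm{op}}\to\|\cdot\|_{\mathrm{nuc}}$ does not accumulate across rows. With exactly orthogonal row and column supports, the Hessian is block diagonal, and $\|\cdot\|_{\mathrm{op}}$ restricted to the blocks is the maximum of the blocks, while $\|\cdot\|_{\mathrm{nuc}}$ is their sum. Each block's change is bounded by $L$ times its own block operator norm, which is not the maximum; so the bound gives $\sum_i L\|\Delta_i\|\le mL\|\Delta\|_{\mathrm{op}}$. Handling this tension, possibly by accepting a factor and compensating with the gap budget, is where I would spend most effort.
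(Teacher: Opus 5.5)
\textbf{There is a genuine gap, and it is in the oracle.} Much of your scaffolding is the paper's. Rows live in disjoint column blocks $C_i$, so $\|G\|_{\mathrm{nuc}}=\sum_i\|g_i\|_2$ and $\|X_{i,C_i}\|_2\le\|X\|_{\mathrm{op}}$, and Haar rotations act inside each block. Your parameters ($m\lambda\ell_1/\beta^2\le L$, $m\lambda/\beta\asymp\epsilon$, $T\asymp\Delta L/\epsilon^2$, $q\asymp(\epsilon/\sigma_0)^{p/(p-1)}$) match the paper's once its $\frac1m$ averaging is absorbed into $\lambda$. The oracle is exactly where the factor $\min\{m,n\}$ comes from, and neither oracle you consider produces it. A single Bernoulli shared by all rows, or independent rate-$q$ Bernoullis per row, both advance about $mq$ rows per call in expectation. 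So, as you compute yourself, $T/q$ calls suffice and no factor $m$ appears.

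Your proposed repair via per-row parameters $(\Delta/m,L,\epsilon/m,\sigma_0/m)$ is not backed by any construction. As your last paragraph notices, the $\mathrm{op}\to\mathrm{nuc}$ Lipschitz constants add across rows, so the per-row constant is $L/m$. With that constant your arithmetic returns $m^0$. There is no tension to handle here: the smoothness bound from your first step is already correct and is the one the paper uses.

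\textbf{The missing idea is to let each oracle call leak progress on only one row.} The paper draws independent $I\sim\mathrm{Unif}([m])$ and $Z\sim\mathrm{Bernoulli}(q)$. It multiplies the coordinates of row $i$ beyond $\operatorname{prog}_{1/4}$ by $m\mathbf{1}\{I=i\}Z/q$ rather than $Z/q$. This is still unbiased. Because the nuclear norm is additive over the orthogonal rows, the noise is one row of size about $(\epsilon/m)(m/q)=\epsilon/q$ with probability $q$, plus $O(\epsilon)$ from zeroing the other frontiers. Its $p$-th moment is therefore $O(\epsilon^pq^{1-p})$. That is the same budget your shared-Bernoulli oracle spends spreading $\epsilon/(mq)$ over all $m$ rows at once.

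Now, on the event that the rotations leak nothing, the total progress $\sum_i\Gamma_{t,i}$ grows by at most $Z^{(t)}$ per call. A Chernoff bound shows it stays below $mT/2$ for $N\approx mT/(4q)$ calls. That leaves at least $m/2$ rows unfinished and $\|\nabla F\|_{\mathrm{nuc}}\gtrsim\epsilon$, hence $N\gtrsim mT/q\asymp m\Delta L\epsilon^{-2}(\sigma_0/\epsilon)^{p/(p-1)}$. Independent per-row Bernoullis of rate $q/m$, rescaled by $m/q$, would work equally well.

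In your per-row bookkeeping, the fix reads as follows. Rows essentially never fire together, so their $p$-th moments add rather than their norms. Each row may therefore carry noise level $m^{-1/p}\sigma_0$ instead of $\sigma_0/m$. With per-row parameters $(\Delta/m,L/m,\epsilon/m,m^{-1/p}\sigma_0)$, the one-dimensional bound does yield the factor $m$.

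Your count ``each row needs $T/q$ calls'' must be replaced by this aggregate count. The $N\asymp mT/q$ queries are also what put $m^2T^2/q$ (up to logarithms) into the condition on $\max\{m,n\}$. Finally, you leave implicit the soft projection $\rho_R$ with a quadratic regularizer, which reduces unbounded queries to the bounded rotation argument. That omission is acceptable if you cite Arjevani et al.\ (2023) for it.
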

Theorem~\ref{thm:main} extends the Euclidean heavy-tailed lower bound to spectral-norm matrix optimization. When $\min\{m,n\}=1$, the spectral norm and the nuclear norm reduce to the vector $\ell_2$-norm. Our lower bound matches the existing bounded-variance~\citep{Arjevani-2023-Lower} and heavy-tailed lower bounds~\citep{Liu-2025-Nonconvex}. The key to our new results is the multiplicative factor $\min\{m,n\}$, which comes from the interaction between spectral-norm geometry and nuclear-norm stochastic gradient estimation. \citet{Yu-2026-Sign} establish a convergence rate that avoids explicit dimension dependence under an optimizer-specific assumption. \citet{Choudhury-2026-Muon} provide a dimension-independent convergence rate under Frobenius-norm-based smoothness and noise assumptions. In contrast, Theorem~\ref{thm:main} demonstrates that under only standard smoothness and heavy-tailed noise assumptions with gradient norm measured by the dual norm, dimension dependence is unavoidable. In the lower-bound construction, independent hard instances are embedded across matrix rows, and an additional random row index controls the information leaked by each oracle response. Since standard $L$-smoothness implies Assumption~\ref{assumption:smooth} with $L_0=L$ and $L_1=0$, the lower bound also applies to the generalized smoothness setting used in our upper bound analysis.
\begin{algorithm}[!t]
\caption{Batched Unconstrained Stochastic Conditional Gradient (BUSCG)} \label{algorithm:batched-uSCG}
\begin{algorithmic}[1]
\STATE \textbf{Input:} $T \geq 1$, $\beta_t \in [0,1]$, $\eta_t > 0$ for $0 \leq t \leq T-1$, and batch size $B$. 
\STATE \textbf{Initialization:} $X_0$, $\bar{G}_0 := \frac{1}{B} \sum_{i=1}^{B} G(X_0,\xi_0^i)$, $m_1:=\bar{G}_0$, $X_1=X_0+\eta_0\operatorname{lmo}(m_1)$.
\FOR{$t = 1, \ldots, T-1$}
\STATE $\bar{G}_t = \frac{1}{B} \sum_{i=1}^{B} G(X_t,\xi_t^i)$. 
\STATE $m_{t+1}=\beta_t m_{t} + (1-\beta_t)\bar{G}_t$. 
\STATE $X_{t+1} = X_t + \eta_t \operatorname{lmo}(m_{t+1})$.
\ENDFOR
\STATE \textbf{Output:} $\widetilde{X}_T$ is uniformly chosen from $\{X_0, \ldots, X_{T-1}\}$. 
\end{algorithmic}
\end{algorithm}

\subsection{Smooth and nonconvex problems}\label{sec:smooth-and-nonconvex-problems}
We move to the upper bound. Algorithm~\ref{algorithm:batched-uSCG} is a batched momentum variant of unconstrained Scion. The LMO fixes the update scale in the primal norm, while the momentum estimator is controlled in the dual norm. This dual-norm control is precisely where the martingale factor $\tau_\star$ enters.
\begin{theorem}\label{thm:nonconvex-smooth}
Suppose that Assumptions~\ref{assumption:smooth} and~\ref{assumption:noise} hold for some $p\in(1,2]$ with $\sigma_0 L_0>0$. Let $\Delta_0 := F(X_0)-F^\star$ and $\tau_\star := \tau(\|\cdot\|_\star,m,n,p)$. For any $T \geq 1$, we choose
\begin{equation*}
B=\max\left\{1, \left\lceil(16\tau_\star\sigma_1)^{\frac{p}{p-1}}\right\rceil\right\}, \quad \beta=1-\alpha, \quad \eta=\min\left\{\sqrt{\tfrac{\alpha\Delta_0}{L_0T}}, \tfrac{\alpha}{8L_1}\right\},
\end{equation*}
with $\frac{\alpha}{8L_1}=+\infty$ when $L_1=0$, $A_0 = L_1\Delta_0+\tau_\star(\sigma_0+\sigma_1\|\nabla F(X_0)\|_\star)B^{-\frac{p-1}{p}}$
and 
\begin{equation*}
\alpha = \min\left\{1, \max\left\{\tfrac{A_0^{\frac{p}{2p-1}}B^{\frac{p-1}{2p-1}}}{(\tau_\star\sigma_0T)^{\frac{p}{2p-1}}}, \tfrac{(L_0\Delta_0)^{\frac{p}{3p-2}}B^{\frac{2p-2}{3p-2}}}{(\tau_\star\sigma_0)^{\frac{2p}{3p-2}}T^{\frac{p}{3p-2}}}\right\}\right\}.
\end{equation*}
Then, Algorithm~\ref{algorithm:batched-uSCG} with $\beta_t\equiv\beta$ and
$\eta_t\equiv\eta$ satisfies
\begin{equation*}
\EE[\|\nabla F(\widetilde{X}_T)\|_\star] \leq 100\left[\tfrac{(L_0\Delta_0)^{\frac{p-1}{3p-2}}(\tau_\star\sigma_0)^{\frac{p}{3p-2}}}{(BT)^{\frac{p-1}{3p-2}}}+\sqrt{\tfrac{L_0\Delta_0}{T}}+\tfrac{A_0}{T}+\tfrac{A_0^{\frac{p-1}{2p-1}}(\tau_\star\sigma_0)^{\frac{p}{2p-1}}}{
(BT)^{\frac{p-1}{2p-1}}}\right].
\end{equation*}
As a consequence, for any sufficiently small $\epsilon>0$, there exists $T \geq 1$ such that Algorithm~\ref{algorithm:batched-uSCG} satisfies $\EE[\|\nabla F(\widetilde{X}_T)\|_{\mathrm{nuc}}] \leq \epsilon$ and the required number of stochastic gradient oracles is bounded by
\begin{equation*}
O\left(\min\{m,n\}\sigma_0^{\frac{p}{p-1}}\epsilon^{-\frac{3p-2}{p-1}}\right).
\end{equation*}
\end{theorem}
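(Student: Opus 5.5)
The plan is to follow the normalized-momentum analysis of \citet{Liu-2025-Nonconvex}, with every estimator error measured in $\|\cdot\|_\star$ and every martingale sum bounded through the factor $\tau_\star$ of Lemma~\ref{lemma:dimension-upper-bound}.

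\textbf{Step 1: one-step descent.} Since $\|X_{t+1}-X_t\|\le\eta\le\frac{\alpha}{8L_1}\le\frac{1}{L_1}$, Assumption~\ref{assumption:smooth} applies along the segment. Integrating it gives
\[
F(X_{t+1})\le F(X_t)+\eta\langle\nabla F(X_t),\operatorname{lmo}(m_{t+1})\rangle+\tfrac{\eta^2}{2}\big(L_0+L_1\|\nabla F(X_t)\|_\star\big).
\]
Write $\epsilon_t:=m_{t+1}-\nabla F(X_t)$. Using $\langle m_{t+1},\operatorname{lmo}(m_{t+1})\rangle=-\|m_{t+1}\|_\star$ and $\|\operatorname{lmo}\|\le1$, the linear term is at most $-\eta\|\nabla F(X_t)\|_\star+2\eta\|\epsilon_t\|_\star$. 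The $L_1$ term is absorbed into half of the descent, because $\eta L_1\le\alpha/8$. Summing over $t$ then yields
\[
\tfrac{1}{T}\sum_t\EE\|\nabla F(X_t)\|_\star\lesssim\tfrac{\Delta_0}{\eta T}+\eta L_0+\tfrac{1}{T}\sum_t\EE\|\epsilon_t\|_\star .
\]

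\textbf{Step 2: unrolling the momentum error.} Set $S(X,Y):=\nabla F(X)-\nabla F(Y)$ and $\xi_t:=\bar G_t-\nabla F(X_t)$. Then
\[
\epsilon_t=\beta^t\xi_0+\beta\sum_{s\le t}\beta^{t-s}S(X_{s-1},X_s)+(1-\beta)\sum_{s=1}^t\beta^{t-s}\xi_s .
\]
Each drift term satisfies $\|S\|_\star\le(L_0+L_1\|\nabla F(X_{s-1})\|_\star)\eta$. Summed with geometric weights, this contributes $\eta L_0/\alpha$ plus $\frac{\eta L_1}{\alpha}\sum_s\beta^{t-s}\|\nabla F\|_\star$; after summing over $t$ the latter is at most $\frac18\sum_t\|\nabla F(X_t)\|_\star$ and is absorbed on the left.

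The noise term is a martingale in $\|\cdot\|_\star$, so the definition of $\tau_\star$ gives
\[
\EE\Big\|\sum_s\beta^{t-s}\xi_s\Big\|_\star\le\tau_\star\,\EE\Big(\sum_s\beta^{p(t-s)}\|\xi_s\|_\star^p\Big)^{1/p}.
\]
Jensen's inequality for $x\mapsto x^{1/p}$ then lets me use $\EE\|\xi_s\|_\star^p$. A second application of $\tau_\star$, now to the i.i.d.\ batch, gives $\EE\|\xi_s\|_\star^p\lesssim\tau_\star^pB^{1-p}\big(\sigma_0^p+\sigma_1^p\|\nabla F(X_s)\|_\star^p\big)$. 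The clean route here is to apply $\tau_\star$ only once, to the joint martingale indexed by (batch element, time), which avoids the double loss.

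The $\sigma_1$ part contributes roughly $\tau_\star\sigma_1B^{-(p-1)/p}\big(\sum_s\beta^{p(t-s)}\|\nabla F(X_s)\|_\star^p\big)^{1/p}\le\tau_\star\sigma_1B^{-(p-1)/p}\sum_s\beta^{t-s}\|\nabla F(X_s)\|_\star$, using $\|\cdot\|_{\ell^p}\le\|\cdot\|_{\ell^1}$. The choice $B\ge(16\tau_\star\sigma_1)^{p/(p-1)}$ makes this at most $\frac1{16}\sum_s\beta^{t-s}\|\nabla F(X_s)\|_\star$, which is again absorbable. The $\sigma_0$ part gives $(1-\beta)(1-\beta^p)^{-1/p}\tau_\star\sigma_0B^{-(p-1)/p}\approx\alpha^{1-1/p}\tau_\star\sigma_0B^{-(p-1)/p}$. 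The initial term $\beta^t\xi_0$ sums to at most $\alpha^{-1}\tau_\star(\sigma_0+\sigma_1\|\nabla F(X_0)\|_\star)B^{-(p-1)/p}$, and this is what produces $A_0$; the $L_1\Delta_0$ part of $A_0$ comes from the absorption in Step 1.

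\textbf{Step 3: tuning.} Collecting terms,
\[
\EE\|\nabla F(\widetilde X_T)\|_\star\lesssim\tfrac{\Delta_0}{\eta T}+\tfrac{\eta L_0}{\alpha}+\tfrac{A_0}{\alpha T}+\alpha^{1-1/p}\tau_\star\sigma_0B^{-(p-1)/p}.
\]
With $\eta$ as stated, the first two terms balance to $\sqrt{L_0\Delta_0/(\alpha T)}$, or give $L_1$-type terms absorbed into $A_0/(\alpha T)$. Substituting each candidate value of $\alpha$ in the max/min and balancing against the noise term recovers the four displayed terms of the bound.

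For the complexity, note that $BT$ is the oracle count. The dominant term requires $BT\gtrsim L_0\Delta_0(\tau_\star\sigma_0)^{p/(p-1)}\epsilon^{-(3p-2)/(p-1)}$. With $\tau_\star=\Theta(\min\{m,n\}^{1-1/p})$ from Lemma~\ref{lemma:dimension-upper-bound}, we get $\tau_\star^{p/(p-1)}=\min\{m,n\}$, and this is the factor in the stated complexity. The remaining terms are of lower order for small $\epsilon$.

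The main obstacle I expect is Step 2. The delicate points are the nested use of $\tau_\star$ across the batch and time, where I will treat the $BT$ samples as one martingale, and absorbing the gradient-dependent noise and $L_1$ terms without losing the $\alpha$ exponents.
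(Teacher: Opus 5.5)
Your architecture matches the paper's: the descent inequality with error term $2\eta\|m_{t+1}-\nabla F(X_t)\|_\star$, the unrolled momentum error with drift controlled by Assumption~\ref{assumption:smooth}, a single application of $\tau_\star$ to the lexicographically ordered time--batch martingale $\frac{a_s}{B}\zeta_s^i$, absorption via $\eta\le\alpha/(8L_1)$ and $B^{(p-1)/p}\ge16\tau_\star\sigma_1$, and the case analysis on $\alpha$. The gap is in turning $\frac{\tau_\star}{B}\,\EE\bigl(\sum_{s,i}a_s^p\|\zeta_s^i\|_\star^p\bigr)^{1/p}$ into absorbable quantities.

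Suppose you apply Jensen for $x\mapsto x^{1/p}$ unconditionally, as you propose (``to use $\EE\|\xi_s\|_\star^p$''). Then the $\sigma_1$ contribution becomes $B^{1/p}\sigma_1\bigl(\sum_s a_s^p\,\EE\|\nabla F(X_s)\|_\star^p\bigr)^{1/p}$, which involves $p$-th moments of the gradient norm. The descent argument only controls $\mathcal A_T=\frac1T\sum_t\EE\|\nabla F(X_t)\|_\star$. Since $(\EE\|\nabla F(X_s)\|_\star^p)^{1/p}\ge\EE\|\nabla F(X_s)\|_\star$ points the wrong way, nothing can be absorbed. Your displayed $\sigma_1$ bound is written pathwise (``roughly'') exactly where this matters.

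The opposite shortcut fails too. Bounding the $\ell^p$ sum by the $\ell^1$ sum before taking expectations costs both the $\alpha^{(p-1)/p}$ and the $B^{-(p-1)/p}$ factors on the $\sigma_0$ term, so the bound no longer vanishes. The $\sigma_0$ and $\sigma_1$ contributions therefore have to be separated inside a conditional expectation.

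The missing idea is the backward conditional Jensen argument of Lemma~\ref{lem:weighted-batched-noise}. Let $\mathcal H_k$ be the history up to $X_k$ but before the $k$-th batch. Let $U$ be any nonnegative $\mathcal H_k$-measurable quantity, namely the partial sum over earlier batches plus deterministic terms already produced. Apply conditional Jensen to the last batch only, then the conditional moment bound, then $(a+b)^{1/p}\le a^{1/p}+b^{1/p}$. This gives $\EE\bigl[(U+a_k^p\sum_i\|\zeta_k^i\|_\star^p)^{1/p}\mid\mathcal H_k\bigr]\le(U+Ba_k^p\sigma_0^p)^{1/p}+B^{1/p}a_k\sigma_1\|\nabla F(X_k)\|_\star$. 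The last term is $\mathcal H_k$-measurable, so it leaves the remaining expectations as a first moment. Iterating for $k=t-1,\dots,0$ yields $B^{1/p}\bigl[\sigma_0(\sum_s a_s^p)^{1/p}+\sigma_1\sum_s a_s\,\EE\|\nabla F(X_s)\|_\star\bigr]$, which is exactly what your Steps 2--3 need.

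Your nested alternative is not merely lossy but unavailable. $\tau_\star$ is defined through a first-moment inequality, so it gives no bound on $\EE\|\xi_s\|_\star^p$, and the joint martingale is required.

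Two minor slips. The $L_1$ drift contributes $\eta L_1\sum_s\beta^{t-s}\|\nabla F(X_s)\|_\star$ per $t$, without your extra $1/\alpha$. The $L_1\Delta_0$ part of $A_0$ comes from $\Delta_0/(\eta T)$ on the branch $\eta=\alpha/(8L_1)$, as you say in Step 3, not from Step 1.

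With these fixes, the rest of your argument goes through as in the paper: the $\sigma_0$ computation $(1-\beta)(1-\beta^p)^{-1/p}\le\alpha^{(p-1)/p}$, the tuning, and the complexity count.
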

Theorem~\ref{thm:nonconvex-smooth} matches the lower bound of Theorem~\ref{thm:main}, up to constants. When $\min\{m,n\}=1$, it recovers the existing heavy-tailed upper bound~\citep{Liu-2025-Nonconvex}. Under spectral-norm geometry, we have $\tau_\star^{p/(p-1)} = \Theta(\min\{m,n\})$, so the dimension dependence is the cost of estimating heavy-tailed gradient noise in the nuclear norm. Our lower bound requires the larger matrix dimension to be sufficiently large relative to the tolerance and noise level. Therefore, in regimes not covered by this condition, such as some square-matrix regimes common in neural network layers, the lower bound does not rule out sharper dimension dependence.

The preceding theorem uses parameters that depend on $p$, which is usually unknown in practice. The next theorem gives a parameter choice that does not require knowing $p$ but yields a worse bound.
\begin{theorem}\label{thm:unknown-p-nonconvex}
Suppose that Assumptions~\ref{assumption:smooth} and~\ref{assumption:noise} hold for some $p\in(1,2]$ with $\sigma_0L_0>0$ and $\sigma_1=0$. For any $T \geq 1$, we choose
\begin{equation*}
B=1, \quad \beta=1-\tfrac{1}{\sqrt{T}}, \quad \eta=\min\left\{\tfrac{1}{T^{3/4}}, \tfrac{1}{8L_1\sqrt{T}}\right\},
\end{equation*}
with $\frac{1}{8L_1\sqrt{T}}=+\infty$ when $L_1=0$. Then, Algorithm~\ref{algorithm:batched-uSCG} with $\beta_t\equiv\beta$ and $\eta_t\equiv\eta$ satisfies
\begin{equation*}
\EE[\|\nabla F(\widetilde{X}_T)\|_\star] \leq \tfrac{16L_1\Delta_0}{\sqrt{T}}+\tfrac{2\Delta_0+6L_0}{T^{1/4}}+\tfrac{8\tau_\star\sigma_0}{T^{\frac{p-1}{2p}}}.
\end{equation*}
As a consequence, for any sufficiently small $\epsilon>0$, there exists $T \geq 1$ such that Algorithm~\ref{algorithm:batched-uSCG} satisfies $\EE[\|\nabla F(\widetilde{X}_T)\|_{\mathrm{nuc}}] \leq \epsilon$ and the required number of stochastic gradient oracles is bounded by
\begin{equation*}
O\left(\min\{m,n\}^2\sigma_0^{\frac{2p}{p-1}}\epsilon^{-\frac{2p}{p-1}}\right).
\end{equation*}
\end{theorem}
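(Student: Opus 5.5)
The proof of Theorem~\ref{thm:unknown-p-nonconvex} reuses the two ingredients behind Theorem~\ref{thm:nonconvex-smooth}: a descent inequality for LMO steps, and a bound on the momentum error in the dual norm through the martingale factor $\tau_\star$. We then specialize to $B=1$, $\alpha=1-\beta=T^{-1/2}$, $\sigma_1=0$, and $\eta\le T^{-3/4}$.

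\textbf{Step 1: descent inequality.} Set $\epsilon_t := m_{t+1}-\nabla F(X_t)$. Since $\|X_{t+1}-X_t\|\le\eta\le 1/L_1$, Assumption~\ref{assumption:smooth} lets us integrate along the segment. Using $\langle m_{t+1},\operatorname{lmo}(m_{t+1})\rangle=-\|m_{t+1}\|_\star$, we get
\[
F(X_{t+1})\le F(X_t)-\eta\|\nabla F(X_t)\|_\star+2\eta\|\epsilon_t\|_\star+\tfrac{\eta^2}{2}(L_0+L_1\|\nabla F(X_t)\|_\star).
\]
Because $\eta L_1\le \frac{1}{8\sqrt T}\le\frac18$, the $L_1$ term is absorbed into the negative gradient term, costing a factor at most $\frac{1}{16}$. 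Telescoping and dividing by $\eta T$ gives
\[
\tfrac1T\textstyle\sum_t\EE\|\nabla F(X_t)\|_\star\lesssim \tfrac{\Delta_0}{\eta T}+L_0\eta+\tfrac{2}{T}\textstyle\sum_t\EE\|\epsilon_t\|_\star .
\]
With $\eta=\min\{T^{-3/4},(8L_1\sqrt T)^{-1}\}$ we have $\frac{\Delta_0}{\eta T}\le \Delta_0T^{-1/4}+8L_1\Delta_0T^{-1/2}$ and $L_0\eta\le L_0T^{-1/4}$. These account for the first two terms of the claimed bound.

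\textbf{Step 2: momentum error.} Unroll the recursion:
\[
\epsilon_t=\beta^{t}(\bar G_0-\nabla F(X_0))+\textstyle\sum_{s=1}^{t}\beta^{t-s}(1-\beta)(\bar G_s-\nabla F(X_s))+\textstyle\sum_{s=0}^{t-1}\beta^{t-s}(\nabla F(X_s)-\nabla F(X_{s+1})).
\]
The last sum is a drift term. Each increment is at most $(L_0+L_1\|\nabla F(X_s)\|_\star)\eta$ in dual norm, so the sum is bounded by $\frac{\eta L_0}{\alpha}$ plus an $L_1$ part. That $L_1$ part is $\eta L_1\sum_s\beta^{t-s}\|\nabla F(X_s)\|_\star$. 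After averaging over $t$ it contributes at most $\frac{\eta L_1}{\alpha}\cdot\frac1T\sum_s\|\nabla F(X_s)\|_\star\le\frac18\cdot(\text{average gradient norm})$, which is absorbed into the left side.

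The first two sums form a weighted martingale difference sequence (the weights are deterministic and $\sigma_1=0$). By the definition of $\tau_\star$ and Jensen's inequality ($x\mapsto x^{1/p}$ is concave),
\[
\EE\Big\|\textstyle\sum_s w_s Z_s\Big\|_\star\le\tau_\star\Big(\textstyle\sum_s w_s^p\sigma_0^p\Big)^{1/p}.
\]
Here $\sum_s w_s^p\le \beta^{tp}+\alpha^p/(1-\beta^p)\le\beta^{tp}+\alpha^{p-1}$. Averaging over $t$, the initialization term contributes $\tau_\star\sigma_0\frac{1}{T\alpha}=\tau_\star\sigma_0T^{-1/2}$, and the stationary term contributes $\tau_\star\sigma_0\alpha^{(p-1)/p}=\tau_\star\sigma_0T^{-\frac{p-1}{2p}}$. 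The drift contributes $\frac{\eta L_0}{\alpha}\le L_0T^{-1/4}$. Collecting terms with the factor $2$ from Step 1 gives the stated constants after modest bookkeeping; I expect the constants $6$ and $8$ to come out loosely.

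\textbf{Step 3: complexity.} Under spectral geometry $\tau_\star=\Theta(\min\{m,n\}^{1-1/p})$. For small $\epsilon$ the dominant requirement is $\tau_\star\sigma_0T^{-\frac{p-1}{2p}}\le\epsilon$, which gives $T=O\big(\tau_\star^{\frac{2p}{p-1}}\sigma_0^{\frac{2p}{p-1}}\epsilon^{-\frac{2p}{p-1}}\big)=O\big(\min\{m,n\}^2\sigma_0^{\frac{2p}{p-1}}\epsilon^{-\frac{2p}{p-1}}\big)$. Since $\frac{2p}{p-1}\ge4$, this also dominates the $T^{-1/4}$ terms, and with $B=1$ the oracle count equals $T$.

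\textbf{Main obstacle.} The delicate point is the $L_1$ part of the drift: gradient norms enter the error bound and must be absorbed by the left-hand side. Choosing $\eta\le(8L_1\sqrt T)^{-1}$ makes $\eta L_1/\alpha\le 1/8$, so the absorption goes through with room to spare.
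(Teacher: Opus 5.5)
Your proposal is correct and follows essentially the same route as the paper. The paper specializes the key inequality from the proof of Theorem~\ref{thm:nonconvex-smooth} (the LMO descent step plus the unrolled momentum error, controlled through $\tau_\star$ via Lemma~\ref{lem:weighted-batched-noise}) to $\sigma_1=0$ and $B=1$, gets the left-hand coefficient $1-\tfrac{1}{16}-\tfrac{1}{4}=\tfrac{11}{16}$, and then substitutes $\alpha=T^{-1/2}$ and the given $\eta$. Your only cosmetic difference is folding $\bar G_0-\nabla F(X_0)$ into the same weighted martingale sum rather than bounding it separately; this yields the same estimate $\beta^t+\alpha^{(p-1)/p}$, and the bookkeeping indeed lands within the stated constants.
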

Theorem~\ref{thm:unknown-p-nonconvex} trades optimal tuning for robustness to an unknown tail index. When $T=\Omega(L_1^4)$, the chosen step size becomes independent of $L_1$, and this yields a bound with worse dependence on $\min\{m,n\}$ and $\epsilon$ under spectral-norm updates. The additional factor $\min\{m,n\}$ compared with Theorem~\ref{thm:nonconvex-smooth} comes from using $B=1$. Without a batch size tuned by $\tau_\star$ and $p$, we cannot directly remove the extra dimension-dependent factor in the complexity bound.

\subsection{Highly smooth and nonconvex problems}
To accelerate Algorithm~\ref{algorithm:batched-uSCG}, we introduce an additional higher-order smoothness condition (Assumption~\ref{assumption:hesslip}). Under this assumption, we propose Algorithm~\ref{algorithm:transported-uSCG}. Inspired by the gradient transportation technique \citep{Cutkosky-2020-Momentum}, this algorithm uses an auxiliary sequence $Y_t$ for gradient evaluation while maintaining the primary sequence $X_t$ for gradient-descent-style updates. By evaluating the gradient at $Y_t$, certain first-order drift terms are replaced by more controllable Hessian curvature error terms. 
\begin{algorithm}[!t]
\caption{Transported Unconstrained Stochastic Conditional Gradient (TUSCG)}
\label{algorithm:transported-uSCG}
\begin{algorithmic}[1]
\STATE \textbf{Input:} $T\geq 1$, $\beta_t\in[0,1)$, $\eta_t>0$, and batch size $B$.
\STATE \textbf{Initialization:} $X_0$, $\bar G_0 := \frac{1}{B} \sum_{i=1}^{B} G(X_0,\xi_0^i)$, $m_1:=\bar{G}_0$, $X_1 := X_0 + \eta_0\operatorname{lmo}(m_1)$.
\FOR{$t = 1, \ldots, T-1$}
\STATE $Y_t = X_t + \frac{\beta_t}{1-\beta_t}(X_t-X_{t-1})$.
\STATE $\bar G_t = \frac{1}{B} \sum_{i=1}^B G(Y_t,\xi_t^i)$.
\STATE $m_{t+1} = \beta_t m_t + (1-\beta_t) \bar{G}_t$.
\STATE $X_{t+1} = X_t + \eta_t \operatorname{lmo}(m_{t+1})$.
\ENDFOR
\STATE \textbf{Output:} $\widetilde X_T$ is uniformly chosen from $\{X_0,\ldots,X_{T-1}\}$.
\end{algorithmic}
\end{algorithm}

\begin{theorem}
\label{thm:transported-uSCG}
Suppose that Assumptions~\ref{assumption:smooth}, \ref{assumption:noise}, and~\ref{assumption:hesslip} hold for some $p\in(1,2]$ with $\sigma_0 L_2>0$. Let $\Delta_0:=F(X_0)-F^\star$ and $\tau_\star:=\tau(\|\cdot\|_\star,m,n,p)$. For any $T\geq 1$, $\beta\in(0,1)$, $\eta \in (0, \tfrac{1-\beta}{8L_1}]$ with $\tfrac{1-\beta}{8L_1}=+\infty$ when $L_1=0$, and $B\geq \left(8\tau_\star\sigma_1\right)^{\frac{p}{p-1}}$, Algorithm~\ref{algorithm:transported-uSCG} with $\beta_t \equiv \beta$ and $\eta_t \equiv \eta$ satisfies
\begin{equation*}
\EE[\|\nabla F(\widetilde{X}_T)\|_{\star}] \leq 8\left[\tfrac{\Delta_0}{\eta T}+L_0\eta
+\tfrac{\tau_\star(\sigma_0+\sigma_1\|\nabla F(X_0)\|_{\star})}{B^{\frac{p-1}{p}}(1-\beta) T}+\tfrac{\tau_\star\sigma_0(1-\beta)^{\frac{p-1}{p}}}{B^{\frac{p-1}{p}}}+\tfrac{\tau_\star\sigma_1L_0\eta}{B^{\frac{p-1}{p}}(1-\beta)}+\tfrac{L_2\eta^2}{(1-\beta)^2}\right].
\end{equation*}
As a consequence, for any sufficiently small $\epsilon>0$, we choose
\begin{equation*}
B=\left\lceil\left(\max\left\{\tfrac{\tau_\star\sigma_0}{\sqrt{\epsilon}},\tfrac{\tau_\star\sigma_1L_0}{\sqrt{L_2\epsilon}}\right\}\right)^{\frac{p}{p-1}}\right\rceil, \quad \beta=1-B\left(\tfrac{\epsilon}{3\tau_\star\sigma_0}\right)^{\frac{p}{p-1}}, \quad 
\eta=\tfrac{1-\beta}{20}\sqrt{\tfrac{\epsilon}{L_2}}. 
\end{equation*}
Then, there exists $T \geq 1$ such that Algorithm~\ref{algorithm:transported-uSCG} satisfies $\EE[\|\nabla F(\widetilde{X}_T)\|_{\mathrm{nuc}}] \leq \epsilon$ and the required number of stochastic gradient oracles is bounded by
$$
O\left(\min\{m,n\}\sigma_0^{\frac{p}{p-1}}\sqrt{L_2}{\epsilon^{-\frac{5p-3}{2p-2}}}\right).
$$
\end{theorem}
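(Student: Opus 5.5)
We follow the standard normalized-momentum template of \citet{Liu-2025-Nonconvex} and \citet{Cutkosky-2020-Momentum}, with two changes: the estimation error is measured in $\|\cdot\|_\star$, and the drift is handled by a Hessian-curvature term. Let $\epsilon_t := m_{t+1}-\nabla F(X_t)$.

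\textbf{Step 1 (descent inequality).} Since $\|X_{t+1}-X_t\|\le\eta\le\frac{1}{8L_1}$, Assumption~\ref{assumption:smooth} gives
\begin{equation*}
F(X_{t+1})\le F(X_t)+\eta\langle\nabla F(X_t),\operatorname{lmo}(m_{t+1})\rangle+\tfrac{\eta^2}{2}\bigl(L_0+L_1\|\nabla F(X_t)\|_\star\bigr).
\end{equation*}
By the LMO identity, $\langle\nabla F(X_t),\operatorname{lmo}(m_{t+1})\rangle\le-\|\nabla F(X_t)\|_\star+2\|\epsilon_t\|_\star$. Absorbing $L_1\eta^2\|\nabla F\|_\star$ into the leading term and summing over $t$ yields
\begin{equation*}
\tfrac{1}{T}\textstyle\sum_t\EE\|\nabla F(X_t)\|_\star\lesssim\tfrac{\Delta_0}{\eta T}+L_0\eta+\tfrac{1}{T}\textstyle\sum_t\EE\|\epsilon_t\|_\star .
\end{equation*}

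\textbf{Step 2 (error recursion).} We need the identity $\beta(X_t-X_{t-1})=(1-\beta)(Y_t-X_t)$ from the definition of $Y_t$. Write $\bar G_t=\nabla F(Y_t)+\zeta_t$. Unrolling gives
\begin{equation*}
\epsilon_t=\beta\epsilon_{t-1}+\beta\bigl(\nabla F(X_{t-1})-\nabla F(X_t)\bigr)+(1-\beta)\bigl(\nabla F(Y_t)-\nabla F(X_t)\bigr)+(1-\beta)\zeta_t .
\end{equation*}
Taylor-expand both gradient differences around $X_t$ using Assumption~\ref{assumption:hesslip}. The first-order Hessian terms are $\beta\nabla^2F(X_t)[X_{t-1}-X_t]$ and $(1-\beta)\nabla^2F(X_t)[Y_t-X_t]$, and these cancel exactly by the identity. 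What remains is at most
\begin{equation*}
L_2\bigl(\beta\eta^2+(1-\beta)\|Y_t-X_t\|^2\bigr)\le\tfrac{2L_2\eta^2}{1-\beta},
\end{equation*}
because $\|Y_t-X_t\|\le\frac{\beta\eta}{1-\beta}$. This requires $\|Y_t-X_t\|\le 1/L_1$, which is why we impose $\eta\le\frac{1-\beta}{8L_1}$. Unrolling the recursion then gives
\begin{equation*}
\epsilon_t=\beta^t\epsilon_0+(1-\beta)\textstyle\sum_{s}\beta^{t-s}\zeta_s+R_t,\qquad \|R_t\|_\star\le\tfrac{2L_2\eta^2}{(1-\beta)^2}.
\end{equation*}
The initial term contributes $\EE\|\epsilon_0\|_\star\le\tau_\star(\sigma_0+\sigma_1\|\nabla F(X_0)\|_\star)B^{-(p-1)/p}$. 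After averaging, $\sum_t\beta^t\le\frac{1}{1-\beta}$, which produces the $\frac{1}{(1-\beta)T}$ term.

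\textbf{Step 3 (martingale noise).} The sequence $\{(1-\beta)\beta^{t-s}\zeta_s\}_s$ is a martingale difference sequence, and each $\zeta_s$ is itself an average of $B$ conditionally i.i.d.\ differences. Apply the definition of $\tau_\star$ to the full sequence indexed by (time, sample), then Jensen to move $\EE$ inside the $p$-th root, then Assumption~\ref{assumption:noise}. This gives
\begin{equation*}
\EE\Bigl\|(1-\beta)\textstyle\sum_s\beta^{t-s}\zeta_s\Bigr\|_\star\le\tfrac{\tau_\star(1-\beta)}{B^{(p-1)/p}}\Bigl(\textstyle\sum_s\beta^{p(t-s)}\bigl(\sigma_0^p+\sigma_1^p\EE\|\nabla F(Y_s)\|_\star^p\bigr)\Bigr)^{1/p}.
\end{equation*}
Use $\sum_s\beta^{p(t-s)}\le\frac{1}{1-\beta}$ and $(a+b)^{1/p}\le a^{1/p}+b^{1/p}$. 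The $\sigma_0$ part then yields $\tau_\star\sigma_0(1-\beta)^{(p-1)/p}B^{-(p-1)/p}$.

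\textbf{Main obstacle.} The hardest part is the $\sigma_1$ part, since it couples back to $\|\nabla F(Y_s)\|_\star$. First, bound $(\sum_s w_s a_s^p)^{1/p}\le\sum_s w_s^{1/p}a_s$ with $w_s=\beta^{p(t-s)}$, so that $w_s^{1/p}=\beta^{t-s}$. Second, use Assumption~\ref{assumption:smooth} to get
\begin{equation*}
\|\nabla F(Y_s)\|_\star\le 2\|\nabla F(X_s)\|_\star+\tfrac{2L_0\eta}{1-\beta}.
\end{equation*}
Averaging over $t$ with $\sum_{t\ge s}\beta^{t-s}\le\frac{1}{1-\beta}$ turns this part into $\frac{2\tau_\star\sigma_1}{B^{(p-1)/p}}\cdot\frac1T\sum_s\EE\|\nabla F(X_s)\|_\star$ plus the term $\frac{\tau_\star\sigma_1L_0\eta}{B^{(p-1)/p}(1-\beta)}$. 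The condition $B\ge(8\tau_\star\sigma_1)^{p/(p-1)}$ makes the coefficient $\le\frac14$, so the gradient term is absorbed into the left-hand side of Step 1. This absorption, with constants small enough to land at the factor $8$, is where care is needed.

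\textbf{Complexity.} Substitute the prescribed $B,\beta,\eta$ and check that each bracketed term is $O(\epsilon)$:
\begin{itemize}
\item $L_2\eta^2/(1-\beta)^2=\epsilon/400$;
\item the $\sigma_0$ noise term is $\le\epsilon/3$ by the choice of $1-\beta$;
\item the $\sigma_1$ term is $O(\epsilon)$ by the choice of $B$;
\item $L_0\eta$ is small for small $\epsilon$.
\end{itemize}
Then choose $T\asymp\frac{\Delta_0}{\eta\epsilon}+\frac{\tau_\star\sigma_0}{B^{(p-1)/p}(1-\beta)\epsilon}$. The total cost is $BT\asymp\frac{B\sqrt{L_2}}{(1-\beta)\epsilon^{3/2}}=\sqrt{L_2}\,\epsilon^{-3/2}(\tau_\star\sigma_0/\epsilon)^{p/(p-1)}$. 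Since $\tau_\star^{p/(p-1)}=\Theta(\min\{m,n\})$ by Lemma~\ref{lemma:dimension-upper-bound}, this equals $O(\min\{m,n\}\sigma_0^{p/(p-1)}\sqrt{L_2}\,\epsilon^{-\frac{5p-3}{2p-2}})$.
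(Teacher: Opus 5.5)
Your Steps~1 and~2 match the paper's argument: the same descent inequality, the same cancellation of the Hessian terms via $\beta(X_t-X_{t-1})=(1-\beta)(Y_t-X_t)$, and the same $2L_2\eta^2/(1-\beta)^2$ curvature bound. The $\sigma_0$ part of Step~3 is also correct.

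\textbf{The gap is the $\sigma_1$ part of Step~3.} Because you apply Jensen to the whole sum at once, your bound contains $\sigma_1^p\,\EE\|\nabla F(Y_s)\|_\star^p$. After $(\sum_s w_sa_s^p)^{1/p}\le\sum_s w_s^{1/p}a_s$ and the comparison with $X_s$, what you actually obtain are terms $\bigl(\EE\|\nabla F(X_s)\|_\star^p\bigr)^{1/p}$. In the averaging step you write these as $\EE\|\nabla F(X_s)\|_\star$. But $\bigl(\EE\|\nabla F(X_s)\|_\star^p\bigr)^{1/p}\ge\EE\|\nabla F(X_s)\|_\star$, so the replacement goes the wrong way. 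Step~1 only controls the first moments $\frac1T\sum_s\EE\|\nabla F(X_s)\|_\star$, and nothing in your argument bounds a $p$-th moment of the gradient norm by its mean. Hence the absorption into the left-hand side does not go through, and that absorption is exactly what the condition $B\ge(8\tau_\star\sigma_1)^{p/(p-1)}$ is for. As written, your bound on $\EE\|\nabla F(\widetilde X_T)\|_\star$ is established only when $\sigma_1=0$.

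\textbf{The missing ingredient} is to apply Jensen conditionally, one batch at a time, and to split off the gradient-dependent part pathwise before taking the outer expectation. This is the paper's Lemma~\ref{lem:weighted-batched-noise}.
\begin{enumerate}
\item After invoking $\tau_\star$ on the (time, sample)-indexed array, fix a batch $k$. Write $U$ for the part of the accumulated quantity that does not involve batch $k$; it is $\mathcal H_k$-measurable.
\item $\Gamma_k=\|\nabla F(Y_k)\|_\star$ is also $\mathcal H_k$-measurable. Conditional Jensen over batch $k$, followed by $(a+b)^{1/p}\le a^{1/p}+b^{1/p}$, gives the bound $(U+Ba_k^p\sigma_0^p)^{1/p}+B^{1/p}a_k\sigma_1\Gamma_k$.
\item Iterating for $k=t-1,\dots,0$ yields $\tau_\star B^{-(p-1)/p}\bigl[\sigma_0(\sum_s a_s^p)^{1/p}+\sigma_1\sum_s a_s\,\EE\|\nabla F(Y_s)\|_\star\bigr]$.
\end{enumerate}
Here the $\sigma_0$ part keeps its $(1-\beta)^{(p-1)/p}$ scaling, while the gradient enters linearly through first moments. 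After this, your comparison of $\nabla F(Y_s)$ with $\nabla F(X_s)$ and the absorption both work.

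Two minor points:
\begin{enumerate}
\item Use $\|\nabla F(Y_s)\|_\star\le(1+\tfrac{L_1\eta}{1-\beta})\|\nabla F(X_s)\|_\star+\tfrac{L_0\eta}{1-\beta}$, which gives a factor $9/8$ instead of $2$. With your factor $2$, the absorbed coefficient becomes $1/2$ and the $\sigma_1L_0$ term ends up with constant $64/7>8$.
\item In the complexity part, plugging the prescribed $\beta$ into the factor-$8$ bound gives $8\cdot\epsilon/3>\epsilon$ from the $\sigma_0$ term alone. Checking that each term is $O(\epsilon)$ is therefore not enough. You need to return to the intermediate inequality; the paper uses the fact that, for small $\epsilon$, the left coefficient is at least $7/8$. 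Alternatively, rescale $\epsilon$ by a constant.
\end{enumerate}
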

The convergence rate improves the exponent of $\epsilon$ by $1/2$ compared with Theorem~\ref{thm:nonconvex-smooth}, at the cost of the additional smoothness condition in Assumption~\ref{assumption:hesslip}.

To achieve acceleration over Algorithm~\ref{algorithm:batched-uSCG} in the unknown $p$ setting, we choose parameters independent of $p$ and use Theorem~\ref{thm:transported-uSCG} to prove the following theorem. 
\begin{theorem}
\label{thm:transported-unknown-p}
Suppose that Assumptions~\ref{assumption:smooth}, \ref{assumption:noise}, and \ref{assumption:hesslip} hold for some $p\in(1,2]$ with $\sigma_0 L_0>0$ and $\sigma_1=0$. For any $T\geq 1$, we choose
\begin{equation*}
B=1, \quad \beta=1-\tfrac{1}{T^{4/7}},\quad \eta=\min\left\{\tfrac{1}{T^{5/7}},\tfrac{1}{8L_1T^{4/7}}\right\}, 
\end{equation*}
with $\tfrac{1}{8L_1T^{4/7}}=+\infty$ when $L_1=0$. Then, Algorithm~\ref{algorithm:transported-uSCG} with $\beta_t=\beta$ and $\eta_t=\eta$ satisfies
\begin{equation*}
\EE[\|\nabla F(\widetilde X_T)\|_\star] \leq 2\left[\tfrac{\Delta_0}{T^{2/7}}+\tfrac{8\Delta_0L_1}{T^{3/7}}+\tfrac{L_0}{2T^{5/7}}+\tfrac{4L_2}{T^{2/7}}+\tfrac{4\tau_\star\sigma_0}{T^{\frac{4p-4}{7p}}}\right].
\end{equation*}
As a consequence, for any sufficiently small $\epsilon>0$, there exists $T\geq 1$ such that Algorithm~\ref{algorithm:transported-uSCG} satisfies $\mathbb{E}\|\nabla F(\widetilde X_T)\|_{\mathrm{nuc}}\leq \epsilon$ and the required number of stochastic gradient oracles is bounded by 
\begin{equation*}
O\left(\min\{m,n\}^{\frac{7}{4}}\sigma_0^{\frac{7p}{4p-4}}\epsilon^{-\frac{7p}{4p-4}}\right).
\end{equation*}
\end{theorem}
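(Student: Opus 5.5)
The plan is to specialize the general bound of Theorem~\ref{thm:transported-uSCG}. That theorem only requires $\beta\in(0,1)$, $\eta\le\frac{1-\beta}{8L_1}$, and $B\ge(8\tau_\star\sigma_1)^{\frac{p}{p-1}}$. With $\sigma_1=0$ and $B=1$ the batch condition is trivial. With $1-\beta=T^{-4/7}$, the choice $\eta=\min\{T^{-5/7},\frac{1}{8L_1T^{4/7}}\}$ satisfies $\eta\le\frac{1-\beta}{8L_1}$ by construction. (For $T=1$ we get $\beta=0$, which lies outside $(0,1)$; I would handle this directly or note that the bound is trivial there.) Substituting $\sigma_1=0$, $B=1$ kills the $\sigma_1$ terms, and the bracket reduces to
\begin{equation*}
\tfrac{\Delta_0}{\eta T}+L_0\eta+\tfrac{\tau_\star\sigma_0}{(1-\beta)T}+\tau_\star\sigma_0(1-\beta)^{\frac{p-1}{p}}+\tfrac{L_2\eta^2}{(1-\beta)^2}.
\end{equation*}

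I then bound each term using the specific choices.

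\textbf{Step term.} Since $\frac{1}{\eta}=\max\{T^{5/7},8L_1T^{4/7}\}\le T^{5/7}+8L_1T^{4/7}$, we get $\frac{\Delta_0}{\eta T}\le \frac{\Delta_0}{T^{2/7}}+\frac{8L_1\Delta_0}{T^{3/7}}$.

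\textbf{Smoothness term.} $L_0\eta\le L_0T^{-5/7}$.

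\textbf{Initialization noise term.} This equals $\tau_\star\sigma_0 T^{4/7-1}=\tau_\star\sigma_0T^{-3/7}$. Since $p\le 2$, we have $\frac{4p-4}{7p}\le\frac{2}{7}<\frac{3}{7}$, so it is dominated by $\tau_\star\sigma_0T^{-\frac{4p-4}{7p}}$.

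\textbf{Momentum noise term.} This equals $\tau_\star\sigma_0T^{-\frac{4(p-1)}{7p}}$ exactly.

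\textbf{Hessian term.} $\frac{L_2\eta^2}{(1-\beta)^2}\le L_2T^{-10/7}T^{8/7}=L_2T^{-2/7}$.

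Collecting terms gives a bound of the stated form, with the constants $2,8,\tfrac12,4,4$ adjusted as needed. My main task here is to track the constant $8$ in Theorem~\ref{thm:transported-uSCG} against the stated constants. If the stated constants turn out to be smaller than what naive substitution gives, I would instead redo the final summation in the proof of Theorem~\ref{thm:transported-uSCG} with these specific parameters rather than citing its constant. I expect this bookkeeping to be the main (mild) obstacle.

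For the complexity, I note that as $T\to\infty$ the slowest-decaying term is $\tau_\star\sigma_0T^{-\frac{4p-4}{7p}}$, because its exponent is at most $2/7$. The terms of order $T^{-2/7}$ are comparable only when $p=2$. For small $\epsilon$, I would make each term at most $\epsilon/10$:
\begin{itemize}
\item the noise term requires $T\gtrsim(\tau_\star\sigma_0/\epsilon)^{\frac{7p}{4p-4}}$;
\item the $\Delta_0,L_2$ terms require $T\gtrsim\epsilon^{-7/2}$, which is no larger in $\epsilon$-order since $\frac{7p}{4p-4}\ge\frac{7}{2}$.
\end{itemize}
With $B=1$, the oracle count is $T+1$. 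Plugging in $\tau_\star=\Theta(\min\{m,n\}^{1-1/p})$ from Lemma~\ref{lemma:dimension-upper-bound} gives $\tau_\star^{\frac{7p}{4p-4}}=\Theta(\min\{m,n\}^{7/4})$, which yields $O(\min\{m,n\}^{7/4}\sigma_0^{\frac{7p}{4p-4}}\epsilon^{-\frac{7p}{4p-4}})$.
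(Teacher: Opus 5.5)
This is the paper's argument: specialize the transported analysis with $\sigma_1=0$, $B=1$, $1-\beta=T^{-4/7}$, use $1/\eta\le T^{5/7}+8L_1T^{4/7}$, and absorb $T^{-3/7}$ into $T^{-\frac{4p-4}{7p}}$ via $p\le 2$. Your $T=1$ concern is harmless, since that analysis never uses $\beta>0$.

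The one thing to fix is the constants. Citing the displayed bound of Theorem~\ref{thm:transported-uSCG} with its factor $8$ overshoots the stated constants: it gives, e.g., $8\Delta_0/T^{2/7}$ and $64L_1\Delta_0/T^{3/7}$ instead of $2\Delta_0/T^{2/7}$ and $16L_1\Delta_0/T^{3/7}$. So your fallback is required, and it is exactly what the paper does. With $\sigma_1=0$, the inequality preceding \eqref{eq:theorem-transported-uSCG-key} has left coefficient $1-L_1\eta/2\ge 15/16$ and right-hand coefficients $1,\tfrac12,2,2,4$. After your substitutions, these give precisely the stated $2[\cdots]$ bound.
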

The convergence rate improves the $\epsilon$-dependence by a factor of $\Theta(\epsilon^{\frac{p}{4p-4}})$ over Theorem~\ref{thm:unknown-p-nonconvex}. Moreover, the $\sigma_0$-dependence improves by a factor of $\sigma_0^{\frac{p}{4p-4}}$ and the dimension dependence improves by a factor of $\min\{m,n\}^{1/4}$ over Theorem~\ref{thm:unknown-p-nonconvex}. It is also worth noting that the choice of $\eta$ is independent of $L_2$.

\section{Experiment}\label{sec:exp}
We evaluate Algorithms~\ref{algorithm:batched-uSCG} and \ref{algorithm:transported-uSCG} for pretraining deep neural networks, including CNNs and transformer-based large language models (LLMs). For the LLM experiments, we train nanochat models~\citep{nanochat} of different sizes on the NVIDIA ClimbMix dataset~\citep{Diao-2025-Nemotron} and report training loss, validation loss, and downstream performance using the CORE metric~\citep{Li-2024-Datacomp}. For the CNN experiments, we train the 2M-parameter CIFARNET model on CIFAR-10 and CIFAR-100. All experiments are implemented in Python 3.12 and PyTorch 2.9.1. The LLM experiments use 10 NVIDIA A40 GPUs, and the CNN experiments use a single NVIDIA A40 GPU. Each GPU has 46 GB of memory and runs Ubuntu 22.04.05 LTS. Additional details on datasets, architectures, benchmarks, and hyperparameter searches are provided in Appendix~\ref{sec:app-exp}.

\paragraph{Implementation details.} Using $\|\cdot\|_{\mathrm{op}}$ in the LMO of Algorithm~\ref{algorithm:batched-uSCG} recovers the Muon update rule~\citep{Jordan-2024-Muon, Pethick-2025-Training} with heavy-ball momentum. The subroutine based on Newton-Schulz iteration or PolarExpress algorithm \citep{Amsel-2026-Polar} provides fast and accurate LMO approximations; we use PolarExpress whenever applicable. We also use the normalization techniques of \citet{Li-2025-Normuon} to improve the LMO approximation and balance neuronwise norms. Except for the baseline Muon optimizer, these normalization techniques are used throughout our experiments. Algorithm~\ref{algorithm:transported-uSCG} evaluates gradients at the auxiliary sequence $Y_t$ while updating the model weights along the sequence $X_t$, analogous to Nesterov momentum. The NAdam optimizer~\citep{Dozat-2016-Incorporating} showed how to incorporate this idea into neural network optimizers through the scheme
\begin{equation*}
\bar{G}_t = \tfrac{1}{B}\sum_{i=1}^B G(X_t,\xi_t^i), \quad m_{t+1} = \beta m_{t} + (1-\beta)\bar G_t, \quad X_{t+1} = X_t+\eta \operatorname{lmo}\left((1-\beta)m_{t+1} + \beta\bar{G}_t\right),
\end{equation*}
where $(1-\beta)m_{t+1} + \beta\bar{G}_t$ is the Nesterov momentum passed to the LMO. This practical scheme is used in several recent works~\citep{Jordan-2024-Muon, Liu-2025-Muon, nanochat}. In our experiments, we evaluate both standard Nesterov momentum and the exact update rule from Algorithm~\ref{algorithm:transported-uSCG}. For the latter, we use the auxiliary update $Y_{t}=X_t+\alpha(X_t-X_{t-1})$, and tune $\alpha$ as a hyperparameter instead of fixing it to $\beta/(1-\beta)$.
\begin{table}[!t]
\centering
\caption{Nanochat optimizer comparison. We report final validation loss and mean
training loss over the last 50 iterations. Results for 287M and 539M are
mean $\pm$ standard error over 6 seeds; 1.39B results are single-seed. H stands for heavy-ball momentum, N for Nesterov momentum, and T for transportation.}
\label{tab:nanochat-optimizer-comparison}
\resizebox{\linewidth}{!}{%
\begin{tabular}{@{}lccccccc@{}}
\toprule
& \multicolumn{2}{c}{287M} & \multicolumn{2}{c}{539M} & \multicolumn{3}{c}{1.39B} \\
\cmidrule(lr){2-3} \cmidrule(lr){4-5} \cmidrule(l){6-8}
Optimizer
& Val. & Train
& Val. & Train
& Val. & Train & CORE \\
\midrule
AdamW(H)
& $2.9822\,{\scriptstyle \pm 2e{-}3}$ & $2.9964\,{\scriptstyle \pm 2e{-}3}$
& $2.7266\,{\scriptstyle \pm 3e{-}3}$ & $2.7684\,{\scriptstyle \pm 3e{-}3}$
& $2.4456$ & $2.4554$ & $0.2200$ \\
Muon(N)
& $2.8549\,{\scriptstyle \pm 2e{-}3}$ & $2.8699\,{\scriptstyle \pm 2e{-}3}$
& $2.6283\,{\scriptstyle \pm 2e{-}3}$ & $2.6715\,{\scriptstyle \pm 2e{-}3}$
& $2.3724$ & $2.3839$ & $0.2522$ \\
NorMuon(N)
& $2.8419\,{\scriptstyle \pm 8e{-}4}$ & $2.8572\,{\scriptstyle \pm 6e{-}4}$
& $2.6167\,{\scriptstyle \pm 6e{-}4}$ & $2.6597\,{\scriptstyle \pm 5e{-}4}$
& $2.3543$ & $2.3656$ & $\mathbf{0.2666}$ \\
NorMuon(H)
& $2.8400\,{\scriptstyle \pm 1e{-}3}$ & $2.8547\,{\scriptstyle \pm 1e{-}3}$
& $2.6166\,{\scriptstyle \pm 5e{-}4}$ & $2.6600\,{\scriptstyle \pm 6e{-}4}$
& $2.3526$ & $2.3649$ & $0.2435$ \\
NorMuonT(H)
& $\mathbf{2.8397\,{\scriptstyle \pm 1e{-}3}}$ & $2.8543\,{\scriptstyle \pm 1e{-}3}$
& $\mathbf{2.6165\,{\scriptstyle \pm 6e{-}4}}$ & $2.6598\,{\scriptstyle \pm 7e{-}4}$
& $\mathbf{2.3525}$ & $2.3733$ & $0.2551$ \\
\bottomrule
\end{tabular}%
}
\end{table}
\begin{table}[!t]
\centering
\caption{CIFARNET optimizer comparison. We report final test loss and test accuracy. Results are mean $\pm$ standard error over five seeds. H stands for heavy-ball momentum, N for Nesterov momentum, and T for transportation.}
\label{tab:cifarnet-optimizer-comparison}
\begin{tabular}{@{}lcccc@{}}
\toprule
& \multicolumn{2}{c}{CIFAR-10} & \multicolumn{2}{c}{CIFAR-100} \\
\cmidrule(lr){2-3} \cmidrule(l){4-5}
Optimizer & Test loss & Test acc. (\%) & Test loss & Test acc. (\%) \\
\midrule
AdamW(H)
& $0.4296\,{\scriptstyle \pm 1.6e{-}3}$ & $92.44\,{\scriptstyle \pm 0.07}$
& $1.4883\,{\scriptstyle \pm 4.3e{-}3}$ & $68.76\,{\scriptstyle \pm 0.20}$ \\
SGDM(H)
& $0.4298\,{\scriptstyle \pm 1.1e{-}3}$ & $92.36\,{\scriptstyle \pm 0.05}$
& $1.4993\,{\scriptstyle \pm 1.6e{-}3}$ & $68.73\,{\scriptstyle \pm 0.08}$ \\
Muon(N)
& $0.3858\,{\scriptstyle \pm 1.2e{-}3}$ & $93.90\,{\scriptstyle \pm 0.08}$
& $1.4183\,{\scriptstyle \pm 4.5e{-}3}$ & $71.93\,{\scriptstyle \pm 0.06}$ \\
NorMuon(N)
& $0.3877\,{\scriptstyle \pm 1.6e{-}3}$ & $93.82\,{\scriptstyle \pm 0.05}$
& $\mathbf{1.4030\,{\scriptstyle \pm 5.1e{-}3}}$ & $72.04\,{\scriptstyle \pm 0.13}$ \\
NorMuon(H)
& $0.3855\,{\scriptstyle \pm 1.1e{-}3}$ & $\mathbf{94.05\,{\scriptstyle \pm 0.09}}$
& $1.4149\,{\scriptstyle \pm 9.2e{-}4}$ & $72.02\,{\scriptstyle \pm 0.11}$ \\
NorMuonT(H)
& $\mathbf{0.3837\,{\scriptstyle \pm 2.0e{-}3}}$ & $93.96\,{\scriptstyle \pm 0.09}$
& $1.4085\,{\scriptstyle \pm 2.8e{-}3}$ & $\mathbf{72.50\,{\scriptstyle \pm 0.12}}$ \\
\bottomrule
\end{tabular}%
\end{table}

\paragraph{LLM Experiment.} We pretrain nanochat models with depths 12, 16, and 24 on the NVIDIA ClimbMix dataset using a data-to-scalable-parameter ratio of 8. These models contain 287M, 539M, and 1.39B parameters and are trained on 882M, 1.88B, and 5.84B tokens, respectively. We compare five optimizers: AdamW~\citep{Loshchilov-2019-Decoupled}, Muon~\citep{Jordan-2024-Muon}, NorMuon~\citep{Li-2025-Normuon}, NorMuon with Nesterov momentum, and NorMuon with transportation, corresponding to Algorithm~\ref{algorithm:transported-uSCG}. Learning rates and momentum factors are selected by grid search, as described in Appendix~\ref{sec:app-exp}. Table~\ref{tab:nanochat-optimizer-comparison} reports the LLM results. Consistent with prior observations, NorMuon outperforms Muon, and Muon outperforms AdamW. This confirms the effectiveness of normalization and scale-invariant optimization under spectral geometry. Moreover, NorMuon with transportation improves over NorMuon, demonstrating the practical benefit of the transportation technique.

\paragraph{CNN Experiment.} We train CIFARNET~\citep{Jordan-2024-Single, Kim-2026-Convergence} on CIFAR-10 and CIFAR-100~\citep{Krizhevsky-2009-Learning}. In addition to the five optimizers used in the LLM experiments, we include SGD with momentum as a CNN baseline. Details on the setup, hyperparameter selection, and grid searches are given in Appendix~\ref{sec:app-exp}. Table~\ref{tab:cifarnet-optimizer-comparison} summarizes the CNN results. The Muon-family methods substantially outperform both AdamW and SGD with momentum on both datasets. The transportation variant achieves the best CIFAR-10 test loss and the best CIFAR-100 test accuracy, while maintaining CIFAR-10 test accuracy comparable to the best heavy-ball NorMuon result.

\section{Conclusion}\label{sec:conclu}
We studied stochastic nonconvex matrix optimization in general-norm geometry and heavy-tailed noise. The motivation is that spectral-norm updates are already used in modern matrix optimizers and are closely connected to hyperparameter transfer, but their theoretical guarantees remain incomplete beyond Frobenius geometry. We showed that, under heavy-tailed noise, spectral-norm geometry affects the difficulty of stochastic optimization and introduces dimension dependence. We established a dimension-dependent lower bound for any gradient-based method, proved that a scale-invariant batched Scion method achieves the matching upper bound in spectral-norm geometry, and proposed a transported Scion method with an improved rate under Hessian Lipschitzness. Experiments on CNNs and transformer models show that transportation techniques are compatible with practical training pipelines. Future directions include developing principled momentum and transportation techniques for other matrix optimizers~\citep{Martens-2015-Optimizing, Gupta-2018-Shampoo, Vyas-2025-Soap} and applying these ideas to larger-scale LLM pretraining with schedule-free parameter tuning~\citep{Defazio-2024-Road}.

\section*{Acknowledgments}
We sincerely appreciate Buzz High Performance Computing (\hyperlink{https://www.buzzhpc.ai}{\texttt{https://www.buzzhpc.ai}}, \texttt{info@buzzhpc.ai}) for providing computational resources and support for this work.

\bibliographystyle{plainnat}
\bibliography{ref}

\newpage \appendix
\section{Further Related Work}\label{app:additional}
We make some comments on other topics, including more discussions on neural network optimization methods and optimization under heavy-tailed noise, the theoretical understanding of matrix optimizers, and hyperparameter transfer. For an overview of neural network optimization methods, we refer to the recent monograph~\citep{Zhang-2023-Dive}.

\paragraph{More discussions on neural network optimization methods.} Beyond matrix optimizers discussed in the main text, many neural network optimizers operate through vector updates. The classical baseline is SGD with momentum, whose practical relevance in deep learning is tied to initialization and momentum tuning~\citep{Robbins-1951-Stochastic, Polyak-1964-Some, Nesterov-1983-Method, Sutskever-2013-Importance}. Coordinatewise adaptive methods instead maintain diagonal statistics of past gradients: AdaGrad and RMSProp accumulate squared-gradient information, Adam combines first- and second-moment exponential averages, AdamW decouples weight decay, Adafactor reduces optimizer memory through factored second-moment estimates, and more recent variants such as Adan and MARS modify the momentum or variance-reduction component~\citep{Duchi-2011-Adaptive, Tieleman-2012-Lecture, Kingma-2015-Adam, Shazeer-2018-Adafactor, Loshchilov-2019-Decoupled, Xie-2024-Adan, Yuan-2025-Mars}. Theoretical analyses of vector optimizers are extensive but optimizer-specific. Indeed, SGD-type methods have nonconvex upper and lower bounds under standard smoothness~\citep{Ghadimi-2013-Stochastic, Arjevani-2023-Lower}. Adaptive methods have been analyzed under relaxed, coordinatewise, or anisotropic smoothness assumptions and through sign-magnitude interpretations of updates~\citep{Reddi-2018-Convergence, Balles-2018-Dissecting, Zhang-2022-Adam, Wang-2023-Convergence, Li-2023-Convergence, Liu-2025-Adagrad, Jiang-2025-Provable, Li-2025-Convergence}. Layerwise-scaled methods such as LARS and LAMB control update scale, which is useful in large-batch regimes~\citep{You-2017-Large, You-2020-Large}. Finally, recent works have framed optimizer design through explicit norm control: modular-norm and operator-norm perspectives motivate scale-invariant layerwise updates and hyperparameter transfer, the Scion framework unifies normalized, sign-based, and spectral updates as norm-ball steps, and modern constrained optimization views connect many existing methods to implicit or explicit norm constraints~\citep{Large-2024-Scalable, Bernstein-2024-Old, Bernstein-2025-Modular, Pethick-2025-Training, Xie-2024-Implicit, DAngelo-2024-Why, Pethick-2025-Sam}.

\paragraph{More discussions on optimization under heavy-tailed noise.} Most existing work focuses on vector optimizers and can be grouped into clipping, normalization, and coordinatewise sign updates. Clipping-based stochastic methods established the $O(\epsilon^{-\frac{3p-2}{p-1}})$ rate for smooth nonconvex objectives and later extended this to high-probability guarantees, nonsmooth and convex problems, and variational inequalities \citep{Zhang-2020-Adaptive, Cutkosky-2021-High, Liu-2023-Breaking, Nguyen-2023-Improved, Sadiev-2023-High, Gorbunov-2024-High, Liu-2024-High}, while matching lower-bound refinements further clarified the dependence on the initial gap, smoothness, and noise scale \citep{Zhang-2020-Adaptive, Liu-2025-Nonconvex}. A second line shows that clipping is not the only robustification mechanism: momentum-based normalized SGD attains an $O(\epsilon^{-\frac{2p}{p-1}})$ rate when $p$ is unknown \citep{Liu-2025-Nonconvex, Hubler-2025-Gradient, Sun-2025-Revisiting}. Coordinatewise sign methods provide another non-Euclidean route, beginning with SignSGD and its majority-vote and error-feedback variants \citep{Bernstein-2018-Signsgd, Bernstein-2019-Signsgd, Karimireddy-2019-Error, Safaryan-2021-Stochastic, Sun-2023-Momentum}. In particular, the Lion optimizer combines sign updates with two momentum parameters and decoupled weight decay, with analyses via constrained dynamics and stochastic Frank-Wolfe interpretations~\citep{Chen-2023-Symbolic, Chen-2024-Lion, Sfyraki-2025-Lions}. Heavy-tailed modeling also connects this optimization literature to robust learning, online learning, and bandits~\citep{Bubeck-2013-Bandits, Hsu-2014-Heavy, Zhang-2018-Regression, Xue-2021-Nearly, Vural-2022-Mirror, Zhang-2022-Parameter, Xue-2023-Efficient, Ye-2025-Catoni, Liu-2025-Online}, and to empirical studies of neural-network and language-model gradient statistics, Zipfian imbalance, and sign-like adaptivity \citep{Simsekli-2019-Tail, Gurbuzbalaban-2021-Heavy, Kunstner-2023-Noise, Kunstner-2024-Heavy, Kunstner-2025-Scaling, Yadav-2025-Provable}.

\paragraph{Theoretical understanding of matrix optimizers.} Existing theory has developed along two basic directions: convergence analysis and mechanistic interpretation. On the former side, early analyses of Muon study an idealized polar-step version, replacing the finite Newton–Schulz (NS) orthogonalization by an exact matrix sign~\citep{Li-2025-Note, Shen-2025-Convergence, Sato-2025-Convergence}. The work of~\citet{Kim-2026-Convergence} is particularly relevant since it narrows this gap: it proves nonconvex convergence for practical Muon with a finite number of NS steps, shows that the NS error only introduces a multiplicative factor relative to the exact-polar rate, and proves that this factor approaches one doubly exponentially in the number of NS iterations and improves with the polynomial degree. Its comparison with SGD with momentum identifies a rank-dependence advantage for matrix orthogonalization under a nuclear-norm optimality criterion. Closely related works analyze Scion or Muon-like methods as norm-constrained linear minimization steps~\citep{Pethick-2025-Training, Sfyraki-2025-Lions}, but they do not provide the sharp dimension dependence of stochastic optimization in spectral geometry under heavy-tailed noise. A second line of work aims to understand why matrix-sign updates are useful. In particular, the normalized steepest descent view identifies Muon as a spectral-geometry analogue of sign or normalized gradient descent \citep{Bernstein-2024-Old, Chen-2025-Muon, Kovalev-2025-Understanding, Riabinin-2025-Gluon}. Other works focus on implicit bias, simplicity bias, nonsmooth analysis, local quadratic models, nonconvex matrix factorization problems, or preconditioning interpretations \citep{Fan-2025-Implicit, Dragutinovic-2026-To, Gronich-2026-Implicit, Davis-2025-Spectral, Lau-2025-Polargrad, Su-2025-Isotropic, Ma-2026-Preconditioning, Gong-2026-Aro, Gonon-2026-Insights, Parshakova-2026-Muon}.

\paragraph{Hyperparameter transfer.} Hyperparameter transfer seeks scaling rules under which hyperparameters tuned on small proxy models, such as initialization, learning rates, residual scales, and regularization, remain near-optimal as width, depth, or compute grows. The maximal-update parametrization $\mu$P gives the canonical width rule: scale initialization and layerwise learning rates so all layers maintain stable, nontrivial feature learning in the infinite-width limit, enabling zero-shot transfer~\citep{Yang-2021-Feature,Yang-2021-Tuning}. A finite-width complement is the spectral condition, which preserves layerwise input-output geometry by requiring weight updates to have $\Theta(\sqrt{d_{\mathrm{out}}/d_{\mathrm{in}}})$ scale in operator norm~\citep{Yang-2023-Spectral}. Transfer also extends to depth, where residual branches and learning rates must be co-scaled to sustain feature learning~\citep{Dey-2025-Lazy}. Finally, learning rate transfer alone is insufficient for compute-optimal training: regularization can depend on model size, with inverse-width scaling often improving transfer~\citep{Xiao-2024-Rethinking, Qiu-2025-Hyperparameter}. Thus, optimizer comparisons should transfer initialization, learning rate, depth scaling, and regularization rules jointly.

\section{Missing Proofs}\label{app:proofs}
\subsection{Martingale property}
\emph{Proof of Lemma~\ref{lemma:dimension-upper-bound}.} We claim that for any norm $\|\cdot\|$ on $\mathbb R^{m\times n}$ and any $p\in(1,2]$, we have $1\leq \tau(\|\cdot\|,m,n,p)\leq 2\sqrt{2mn}$.

For the lower bound, take a nonzero matrix $A\in\mathbb R^{m\times n}$ and let
$Z_1=\epsilon A$, where $\epsilon$ is a fair sign. Letting $T=1$ gives $\tau(\|\cdot\|,m,n,p)\ge1$.

For the upper bound, by John's theorem, there exists a Euclidean
norm $|\cdot|$ on $\mathbb R^{m\times n}$ such that
$|A|\le \|A\|\le \sqrt{mn}\,|A|$ for every $A\in\mathbb R^{m\times n}$. Hence, for any
$\mathbb R^{m\times n}$-valued martingale difference sequence $(Z_t)_{t=1}^T$, it follows that
\[
\begin{aligned}
\mathbb E\biggl\|\sum_{t=1}^T Z_t\biggr\|
\le \sqrt{mn}\,\mathbb E\biggl|\sum_{t=1}^T Z_t\biggr| \le 2\sqrt{2mn}\,\mathbb E\biggl(\sum_{t=1}^T |Z_t|^p\biggr)^{\frac{1}{p}} \le 2\sqrt{2mn}\,\mathbb E\biggl(\sum_{t=1}^T \|Z_t\|^p\biggr)^{\frac{1}{p}},
\end{aligned}
\]
where the middle inequality is Lemma~4.3 of \cite{Liu-2025-Nonconvex}. 

We claim that for any $p\in(1,2]$, there exists a constant $C_p$ such that ${\min\{m,n\}}^{1-1/p}\le\tau(\|\cdot\|_{\mathrm{nuc}},m,n,p)\le C_p\min\{m,n\}^{1-1/p}$.

Let $r=\min\{m,n\}$. For the lower bound, let $Z_k=\epsilon_k E_{kk}$ for
$k=1,\dots,r$, where $(\epsilon_k)$ are independent fair signs. Then
$\|\sum_{k=1}^r Z_k\|_{\mathrm{nuc}}=r$ and
$(\sum_{k=1}^r \|Z_k\|_{\mathrm{nuc}}^p)^{\frac{1}{p}}=r^{\frac{1}{p}}$
almost surely, so $\tau(\|\cdot\|_{\mathrm{nuc}},m,n,p) \ge r^{1-1/p}$.

In what follows, we prove the upper bound. Let $\|\cdot\|_{S_p}$ denote the Schatten $p$-norm. By Hölder's inequality for the singular values, for every $A\in\mathbb R^{m\times n}$, we have
\begin{equation}
\label{eq:nuclear-sp-comparison}
\|A\|_{\mathrm{nuc}}\le r^{1-1/p}\|A\|_{S_p}\leq r^{1-1/p} \|A\|_{\mathrm{nuc}}.
\end{equation}

Next, by \cite{Ball-1994-Sharp}, the Schatten class $S_p^{m,n}$ is $p$-uniformly smooth for every $1<p\le 2$. By \cite[Proposition~10.31(i), Corollary~10.23, Theorem~10.60]{Pisier-2016-Martingales}, this implies that there exists a constant $C_p>0$, depending only on $p$, such that every $S_p^{m,n}$-valued martingale $f=(f_n)_{n\ge 0}$ satisfies
\[
\mathbb E[\sup_{n\ge 0}\|f_n\|_{S_p}]\le C_p\mathbb E [\Bigl(\sum_{n\ge 0}\|df_n\|_{S_p}^p\Bigr)^{\frac{1}{p}}].
\]

Now let $(Z_k)_{k=1}^N$ be any matrix-valued martingale difference sequence, and define the martingale $f_n:=\sum_{k=1}^n Z_k$ with the convention $f_0=0$. Then $df_n=Z_n$ for $1\le n\le N$, and hence
\[
\mathbb E\Bigl\|\sum_{k=1}^N Z_k\Bigr\|_{S_p}
=
\mathbb E\|f_N\|_{S_p}
\le
\mathbb E[\sup_{n\ge 0}\|f_n\|_{S_p}]
\le
C_p\,
\mathbb E\Bigl(\sum_{k=1}^N \|Z_k\|_{S_p}^p\Bigr)^{\frac{1}{p}}.
\]
Using \eqref{eq:nuclear-sp-comparison} on both sides, we obtain
\[
\begin{aligned}
\mathbb E\Bigl\|\sum_{k=1}^N Z_k\Bigr\|_{\mathrm{nuc}}
\le
r^{1-1/p}\,
\mathbb E\Bigl\|\sum_{k=1}^N Z_k\Bigr\|_{S_p}
&\le
C_p\,r^{1-1/p}\,
\mathbb E\Bigl(\sum_{k=1}^N \|Z_k\|_{S_p}^p\Bigr)^{\frac{1}{p}}
\\
&\le
C_p\,r^{1-1/p}\,
\mathbb E\Bigl(\sum_{k=1}^N \|Z_k\|_{\mathrm{nuc}}^p\Bigr)^{\frac{1}{p}}.
\end{aligned}
\]
This proves the claim. \hfill$\square$

\subsection{Dimension-dependent lower bound}
We prove Theorem~\ref{thm:main} in full and restate it with the logarithmic dimension condition made explicit after stating the key lemmas. Throughout this section, $\mathrm{St}(d_1,d_2):=\{U\in\mathbb R^{d_1\times d_2}:U^\top U=I_{d_2}\}$ denotes the Stiefel manifold. For $x\in\mathbb R^d$, let $\operatorname{prog}_{\alpha}(x):=\sup(\{i\in[d]:|x_i|>\alpha\}\cup\{0\})$ and $\operatorname{supp}(x):=\{i\in[d]:x_i\ne0\}$.

The lower bound is obtained from a distribution over hard objectives and stochastic gradient oracles. The construction starts from the following zero-chain instance of \cite[Lemma 2]{Arjevani-2023-Lower}.

\begin{lemma}
\label{lem:base-chain}
There exist universal constants $\Delta_{\rm ch}>0$, $\ell_{\rm ch}\ge 1$, and $G_{\rm ch}\ge 1$ such that for every $T\ge1$ there is a continuously differentiable function $\phi_T:\mathbb{R}^T\to\mathbb{R}$ satisfying:
\begin{enumerate}
\item $\phi_T(0)-\inf_u\phi_T(u)\le\Delta_{\rm ch}T$.
\item $\left\lVert \nabla\phi_T(u)-\nabla\phi_T(v)\right\rVert_2\le\ell_{\rm ch}\left\lVert u-v\right\rVert_2$ for all $u,v$, and $\left\lVert \nabla\phi_T(u)\right\rVert_{\infty} \le G_{\rm ch}$ for all $u$.
\item $\left\lVert \nabla\phi_T(u) \odot  \mathbf{1}_{>\operatorname{prog}_{1/4}(u)}\right\rVert_2\le G_{\rm ch}$ for all $u$.
\item $\operatorname{supp}(\nabla\phi_T(u))\subseteq[\operatorname{prog}_{1/2}(u)+1]$ for all $u$.
\item If $\operatorname{prog}_1(u)<T$ and $j=\operatorname{prog}_1(u)+1$, then $\left\lvert \nabla_j\phi_T(u)\right\rvert >1$.
\end{enumerate}
\end{lemma}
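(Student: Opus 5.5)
The plan is to take the standard zero-chain construction of \cite{Carmon-2020-Lower,Arjevani-2023-Lower} unchanged. Properties 1, 2, 4 and 5 are then quoted from \cite[Lemma 2]{Arjevani-2023-Lower}, and property 3 is derived from properties 2 and 4; it is the only item not stated verbatim there.

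Concretely, define $\Psi(x)=0$ for $x\le 1/2$ and $\Psi(x)=\exp\bigl(1-\tfrac{1}{(2x-1)^2}\bigr)$ for $x>1/2$, and $\Phi(x)=\sqrt{e}\int_{-\infty}^x e^{-t^2/2}\,dt$. Set
\[
\phi_T(u)=-\Psi(1)\Phi(u_1)+\sum_{i=2}^{T}\bigl[\Psi(-u_{i-1})\Phi(-u_i)-\Psi(u_{i-1})\Phi(u_i)\bigr].
\]
The elementary bounds we need are $0\le\Psi\le e$, $0\le\Psi'\le\sqrt{54/e}$, $0<\Phi<\sqrt{2\pi e}$ and $0<\Phi'\le\sqrt e$, together with $\Psi(x)\Phi'(y)>1$ whenever $x\ge1$ and $|y|<1$. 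From these, \cite[Lemma 2]{Arjevani-2023-Lower} gives four facts:
\begin{enumerate}
\item $\phi_T(0)-\inf\phi_T\le 12T$;
\item $\nabla\phi_T$ is $152$-Lipschitz in $\ell_2$, and $\|\nabla\phi_T\|_\infty\le 23$;
\item $\operatorname{supp}(\nabla\phi_T(u))\subseteq[\operatorname{prog}_{1/2}(u)+1]$, which holds because $\Psi$ and $\Psi'$ vanish on $(-\infty,1/2]$;
\item $|\nabla_j\phi_T(u)|>1$ at $j=\operatorname{prog}_1(u)+1$, which comes from the large-gradient property of $\Psi\Phi'$.
\end{enumerate}
We take $\Delta_{\rm ch}=12$, $\ell_{\rm ch}=152$ and $G_{\rm ch}=23$.

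For property 3, let $k=\operatorname{prog}_{1/4}(u)$. The set $\{i:|u_i|>1/2\}$ is contained in $\{i:|u_i|>1/4\}$, so $\operatorname{prog}_{1/2}(u)\le k$. Property 4 then implies that $\nabla\phi_T(u)$ vanishes on every index $j\ge k+2$. Hence the vector $\nabla\phi_T(u)\odot\mathbf 1_{>k}$ has at most one nonzero entry, at index $k+1$. Its $\ell_2$ norm therefore equals $|\nabla_{k+1}\phi_T(u)|\le\|\nabla\phi_T(u)\|_\infty\le G_{\rm ch}$, using property 2. If $k=T$, the vector is zero and the bound is trivial.

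The only real obstacle is checking that our progress conventions match those of \cite{Arjevani-2023-Lower}. We use the strict inequality $|x_i|>\alpha$ and the convention $\sup\emptyset=0$. For property 5, the fact needed is that at $j=\operatorname{prog}_1(u)+1$ we have $|u_j|\le 1$ while $u_{j-1}$ satisfies $|u_{j-1}|>1$ (or $j=1$). The strict inequality $\Psi(x)\Phi'(y)>1$ needs $|y|<1$, so the boundary case $|u_j|=1$ has to be handled; continuity and monotonicity of $\Phi'$ on $[0,1]$ give enough slack, since $\Psi(x)\Phi'(y)\ge e\cdot e^{0}$ in the relevant regime. All remaining steps are direct citations.
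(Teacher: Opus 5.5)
Your route is the paper's. The paper gives no argument beyond invoking the Carmon--Arjevani zero chain through \cite[Lemma 2]{Arjevani-2023-Lower}. Your derivation of item 3 is correct: since $\operatorname{prog}_{1/2}(u)\le\operatorname{prog}_{1/4}(u)$, item 4 leaves at most the single coordinate $\operatorname{prog}_{1/4}(u)+1$ past the mask, and that coordinate is bounded by $\|\nabla\phi_T(u)\|_\infty$. This is exactly the observation behind Arjevani et al.'s variance bound for their probabilistic oracle.

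What needs fixing is the boundary case of item 5, which is the one step you say requires checking. Your claimed bound $\Psi(x)\Phi'(y)\ge e\cdot e^{0}$ is false. We have $\Psi<e$ everywhere, $\Psi(1)=1$ and $\Phi'(\pm1)=1$, so $\Psi(1)\Phi'(1)=1$, and $\Phi'$ gives no slack at $|y|=1$.

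The slack actually comes from elsewhere. All terms of $\nabla_j\phi_T(u)$ are nonpositive. For $j\ge2$, the strict definition of $\operatorname{prog}_1$ gives $|u_{j-1}|>1$. Since $\Psi$ is strictly increasing on $(1/2,\infty)$, this yields $|\nabla_j\phi_T(u)|\ge\Psi(|u_{j-1}|)\Phi'(u_j)>\Psi(1)\cdot 1=1$.

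For $j=1$ the leading coefficient is exactly $\Psi(1)=1$. So when $|u_1|=1$, strictness must come from the extra term $\Psi'(1)\Phi(\pm u_2)=4\Phi(\pm u_2)>0$, which is present only when $T\ge2$. For $T=1$ your $\phi_1(u)=-\Phi(u_1)$ has $|\nabla_1\phi_1(\pm1)|=\Phi'(1)=1$, so item 5 fails there for this construction, citation or not.

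This is easily repaired in either of two ways. You can set $\phi_1(u):=-2\Phi(u_1)$, which satisfies all five items with your constants $12,152,23$. Alternatively, note that the lower-bound proof only uses $T\ge 4\log 4+4$.
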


We embed independent copies of the chain in disjoint row--column blocks. Fix $m\le n$ and $n_{\rm blk}\ge1$ with $mn_{\rm blk}\le n$. Choose disjoint column sets $C_1,\ldots,C_m\subseteq[n]$ with $|C_i|=n_{\rm blk}$, and let $c_{i,1}<\cdots<c_{i,n_{\rm blk}}$ enumerate $C_i$. The unfolding map and its right-inverse are
\[
    [\Pi(X)]_{i,k}:=X_{i,c_{i,k}},
    \qquad
    [\Pi^*(y)]_{i',j}:=\begin{cases} y_{i,k} & (i',j)=(i,c_{i,k}),\\ 0 & \text{else,}\end{cases}
\]
mapping $\mathbb R^{m\times n}\leftrightarrow\mathbb R^{m\times n_{\rm blk}}$. The image of $\Pi^*$ is the linear subspace $\mathcal S^*:=\{Y\in\mathbb{R}^{m\times n}:Y_{i',j}=0\text{ for }(i',j)\notin\bigcup_{i\in[m]}\{i\}\times C_i\}$, and for smooth $h:\mathbb R^{m\times n_{\rm blk}}\to\mathbb R$ we have $\nabla(h\circ\Pi)(X)=\Pi^*(\nabla h(\Pi(X)))\in\mathcal S^*$.

On this subspace we average the row-wise chains. For $W\in\mathbb R^{m\times T}$, define
\begin{equation}
\label{eq:unscaled-tensor-chain}
    H_T(W):=\frac{1}{m}\sum_{i=1}^m \phi_T(W_{i,:}),
\end{equation}
and let $I\sim\operatorname{Unif}([m])$, $Z\sim\operatorname{Bernoulli}(q)$ be independent, $q\in(0,1]$. The stochastic gradient oracle $\bar g_T(W,I,Z)\in\mathbb{R}^{m\times T}$ is given by
\begin{equation}
\label{eq:unscaled-prob-oracle}
    [\bar g_T(W,I,Z)]_{i,j}
    :=\tfrac{1}{m}[\nabla\phi_T(W_{i,:})]_j\bigl[1+\mathbf{1}\{j>\operatorname{prog}_{1/4}(W_{i,:})\}(m\mathbf{1}\{I=i\}Z/q-1)\bigr].
\end{equation}

We next hide the active coordinates by independent random rotations in each block. For $U=(U_1,\ldots,U_m)\in\mathrm{St}(n_{\rm blk},T)^m$ and $X\in\mathbb R^{m\times n}$, define $[\Phi_U(X)]_{i,:}:=U_i^\top X_{i,C_i}$. The rotated oracle is
\begin{equation}
\label{eq:bounded-rotated-oracle}
    \widetilde H_{T,U}(X):=H_T(\Phi_U(X)),\quad
    [\widetilde g_{T,U}(X,I,Z)]_{i,C_i}:=U_i\bigl[\bar g_T(\Phi_U(X),I,Z)\bigr]_{i,:},
\end{equation}
with $[\widetilde g_{T,U}(X,I,Z)]_{i,j}=0$ for $j\notin C_i$. It follows that $\nabla\widetilde H_{T,U}(X)\in\mathcal S^*$ with $[\nabla\widetilde H_{T,U}(X)]_{i,C_i}=U_i[\nabla H_T(\Phi_U(X))]_{i,:}$. This is the matrix analogue of the rotated oracle construction in \cite[Section 4]{Arjevani-2023-Lower}.

The proof now separates into three ingredients. First, bounded queries cannot reveal enough hidden coordinates under the rotated oracle. Second, a soft projection removes the bounded-query assumption with the compressed objectives and oracles. Third, the compressed objectives lie in the desired family $\mathcal{F}_{\rm{op}}(m,n,\Delta,L)$.

\begin{lemma}
\label{lem:random-rotation-bounded}
Let $q\in(0,1]$, $R>0$, $\delta\in(0,1)$, and define
\begin{equation}
\label{eq:N-delta}
    N_\delta:=\bigl\lfloor\tfrac{mT-2\log(2/\delta)}{4q}\bigr\rfloor.
\end{equation}
Assume $N_\delta\ge 1$. There is a universal constant $C_{\rm rot}>0$ such that, if
\begin{equation}
\label{eq:bounded-rotation-dimension}
    n_{\rm blk}\ge T+N_\delta+C_{\rm rot}R^2N_\delta\log(C_{\rm rot}mN_\delta T/\delta),
\end{equation}
then for $U_1,\ldots,U_m$ independent Haar draws from $\mathrm{St}(n_{\rm blk},T)$ and every randomized algorithm interacting with \eqref{eq:bounded-rotated-oracle} satisfying $\|X_{i,C_i}^{(t)}\|_2\le R$ for all $i,t$, with probability at least $1-\delta$,
\begin{equation}
\label{eq:many-blocks-low-progress-bounded}
    \bigl|\{i\in[m]:\operatorname{prog}_{1/4}(U_i^\top X_{i,C_i}^{(t)})<T\}\bigr|\ge m/2\quad\forall t\le N_\delta.
\end{equation}
\end{lemma}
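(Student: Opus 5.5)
The plan is to adapt the random-rotation argument of \cite[Section~4]{Arjevani-2023-Lower} (originating with Carmon et al.) to $m$ independent blocks, and to combine it with a counting argument on the random ``reveal'' events $\{I=i, Z=1\}$. The proof splits into a combinatorial part (how much progress the oracle can leak) and a geometric part (queries of norm at most $R$ cannot find unrevealed directions by chance). We allot failure probability $\delta/2$ to each part.

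\textbf{Combinatorial part.} By \eqref{eq:unscaled-prob-oracle}, the entries of $\bar g_T$ beyond $\operatorname{prog}_{1/4}(W_{i,:})$ are nonzero only for the row $i=I$, and only when $Z=1$. Property~4 of Lemma~\ref{lem:base-chain} gives $\operatorname{supp}(\nabla\phi_T(u))\subseteq[\operatorname{prog}_{1/2}(u)+1]$. Hence an oracle call can expose at most one new chain coordinate, in row $I$, and only on the event $Z=1$.

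Let $K_t$ be the number of indices $s\le t$ with $Z^{(s)}=1$. The $Z^{(s)}$ are i.i.d.\ Bernoulli$(q)$ and independent of the algorithm's choices, so $K_{N_\delta}\sim\mathrm{Bin}(N_\delta,q)$, with mean at most $(mT-2\log(2/\delta))/4$. A multiplicative Chernoff bound then gives $\Pr[K_{N_\delta}\ge mT/2]\le\delta/2$; the additive $2\log(2/\delta)$ in \eqref{eq:N-delta} is there to pay for this tail.

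On the complementary event, suppose each row's progress increases only through reveals. Then the total progress $\sum_i\operatorname{prog}_{1/4}(U_i^\top X^{(t)}_{i,C_i})$ is below $mT/2$. So fewer than $m/2$ rows can reach progress $T$, which is \eqref{eq:many-blocks-low-progress-bounded}.

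\textbf{Geometric part (main obstacle).} We must justify that progress increases only through reveals. That is, with probability at least $1-\delta/2$, for every $t\le N_\delta$ and every $i$, the query satisfies $|\langle u_{i,j},X^{(t)}_{i,C_i}\rangle|<1/4$ for every column $u_{i,j}$ of $U_i$ whose index $j$ has not yet been revealed in row $i$.

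We argue by induction on $t$, tracking the set $\mathcal R_i^{(t)}$ of revealed indices in each row.
\begin{itemize}
\item Suppose the bound holds up to round $t-1$. Then by Property~4 and \eqref{eq:unscaled-prob-oracle}, every oracle response so far is a function of $(I,Z)$, the algorithm's seed, and the revealed columns $\{u_{i,j}:j\in\mathcal R_i\}$ only.
\item Condition on this information. By rotational invariance of Haar measure, the unrevealed columns of $U_i$ form a uniformly random orthonormal frame in the orthogonal complement of the span of the revealed columns and past queries. That complement has dimension at least $n_{\rm blk}-T-N_\delta$.
\item The new query $X^{(t)}_{i,C_i}$ is measurable with respect to the conditioning and has norm at most $R$. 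Concentration on the sphere therefore gives
\[
\Pr\bigl[|\langle u,X^{(t)}_{i,C_i}\rangle|\ge 1/4\bigr]\le 2\exp\bigl(-c\,(n_{\rm blk}-T-N_\delta)/R^2\bigr).
\]
\end{itemize}

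A union bound over $m$ rows, $N_\delta$ rounds and $T$ columns, together with \eqref{eq:bounded-rotation-dimension}, makes the total failure probability at most $\delta/2$.

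The delicate point is making this conditioning rigorous. Past queries have components along revealed columns and along earlier queries, so we must project them out before applying spherical concentration. We also need the extra $N_\delta$ slack in the dimension, and the factor $N_\delta$ multiplying $R^2\log(\cdot)$, to absorb the accumulated projections; following \cite{Arjevani-2023-Lower}, we handle this by considering the Gram--Schmidt residuals of the queries. Intersecting the two good events completes the proof.
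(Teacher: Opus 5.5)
Your combinatorial half is the same as the paper's. On the event $\operatorname{prog}_{1/4}(U_i^\top X^{(t)}_{i,C_i})\le\Gamma_{t-1,i}$, Lemma~\ref{lem:block-chain-oracle} gives $\sum_i(\Gamma_{t,i}-\Gamma_{t-1,i})\le Z^{(t)}$, and the Chernoff bound with $a=mT/2$ costs $\delta/2$. The geometric half, however, rests on a false premise.

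\textbf{The false premise.} It is not true that, on the good event, past responses depend only on $(I,Z)$, the seed and the revealed columns. The oracle \eqref{eq:bounded-rotated-oracle} returns $\widetilde H_{T,U}(X^{(a)})$, which contains $\phi_T(U_i^\top X^{(a)}_{i,C_i})$. The coefficients of the revealed columns in the gradient response are entries of $\nabla\phi_T(U_i^\top X^{(a)}_{i,C_i})$. Both can depend on inner products of past queries with unrevealed columns. For the chain of \cite{Arjevani-2023-Lower}, they depend on $\langle U_ie_{k+1},X^{(a)}_{i,C_i}\rangle$ whenever the $k$-th rotated coordinate exceeds $1/2$ in absolute value, even if column $k+1$ is unrevealed.

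\textbf{Consequences.} This is why the paper conditions on $\mathcal U^{(s-1)}_{j-1,i}$, which contains all of $U_i^\top X^{(a)}_{i,C_i}$ for $a<s$. Under that conditioning, $U_ie_j$ is neither orthogonal to the past queries nor uniform in the complement of their span. Only its residual $P^{(s-1)}_{j-1}U_ie_j$ is rotationally invariant on $(\mathsf S^{(s-1)}_{j-1,i})^\perp$. The component $\Pi^{(s-1)}_{j-1}U_ie_j$ along past queries is fixed by the conditioning and generally nonzero.

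So your one-shot bound $2\exp(-c(n_{\rm blk}-T-N_\delta)/R^2)$ does not follow, and it cannot hold in general. An adaptive algorithm can read off $\langle U_ie_j,v\rangle$ along $k$ probe directions $v$. It can then query along the projection of $U_ie_j$ onto their span, obtaining correlation of order $R\sqrt{k/n_{\rm blk}}$. This reaches $1/4$ once $k\gtrsim n_{\rm blk}/R^2$.

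Conditioning on ``the bound held through round $t-1$'' also needs care, since that event depends on $U_i$ and Haar invariance must be re-established. The paper does this for the event $\mathcal T^{(s-1)}_{j,i}$ in Lemma~\ref{lem:block-conditional-uniformity}. It shows that the whole history and $\mathcal T^{(s-1)}_{j,i}$ are unchanged under $U_i\mapsto WU_i$ for every orthogonal $W$ fixing $\mathsf S^{(s-1)}_{j-1,i}$.

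\textbf{The missing idea: accumulated-leakage control.} At each round, bound only the \emph{new} leakage, $\|\Pi^{(s)}_{j-1}P^{(s-1)}_{j-1}U_ie_j\|_2<1/(4R\sqrt{N_\delta})$; this is the event $\mathcal G^{(s)}_{j,i}$. The new query adds at most one dimension to the span. Rotational invariance of the residual then gives $\mathbb P((\mathcal G^{(s)}_{j,i})^c\cap\mathcal T^{(s-1)}_{j,i})\le 2\exp(-(n_{\rm blk}-s-j)/(64R^2N_\delta))$ (Lemma~\ref{lem:one-step-leakage}).

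On $\bigcap_{a\le s}\mathcal G^{(a)}_{j,i}$, the nested-projection decomposition of Lemma~\ref{lem:abstract-projection} gives $\|\Pi^{(s)}_{j-1}U_ie_j\|_2^2<s/(16R^2N_\delta)\le 1/(16R^2)$. Hence $|\langle U_ie_j,X^{(s)}_{i,C_i}\rangle|<1/4$ by Cauchy--Schwarz and $\|X^{(s)}_{i,C_i}\|_2\le R$.

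The union bound over $(i,s,j)$ becomes $2mN_\delta T\exp(-(n_{\rm blk}-N_\delta-T)/(64R^2N_\delta))\le\delta/2$. This is where the factor $N_\delta$ in front of $R^2\log(\cdot)$ in \eqref{eq:bounded-rotation-dimension} comes from. Your closing remark about Gram--Schmidt residuals points in this direction, but the estimate you display has no $N_\delta$ in the exponent. As written, the geometric half of your proof is not established.
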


We now compress arbitrary queries into a bounded block before applying the rotated oracle. For constants $C_\rho$ and $\eta$ to be fixed below, set $R=C_\rho\sqrt T$ and define the blockwise soft projection
\[
    [\rho_R^{\rm blk}(X)]_{i,j}:=\begin{cases}[\rho_R(X_{i,C_i})]_k & j=c_{i,k}\in C_i,\\ 0 & j\notin C_i,\end{cases}
    \qquad \rho_R(z):=\tfrac{z}{\sqrt{1+\|z\|_2^2/R^2}},
\]
equivalently $\rho_R^{\rm blk}=\Pi^*\circ\rho_R^{\otimes m}\circ\Pi$. Setting $h_{T,U}(z):=\phi_T(U^\top\rho_R(z))+\tfrac{\eta}{2}\|z\|_2^2$, the compressed objective and its stochastic gradient oracle are
\begin{align}
\label{eq:compressed-objective}
    \widehat F_{T,U}(X)
    &:=H_T(\Phi_U(\rho_R^{\rm blk}(X)))+\tfrac{\eta}{2m}\sum_{i=1}^m\|X_{i,C_i}\|_2^2=\frac{1}{m}\sum_{i=1}^m h_{T,U_i}(X_{i,C_i}),\\
\label{eq:compressed-oracle}
    [\widehat g_{T,U}(X,I,Z)]_{i,C_i}
    &:=J_R(X_{i,C_i})^\top U_i\bigl[\bar g_T(\Phi_U(\rho_R^{\rm blk}(X)),I,Z)\bigr]_{i,:}+\tfrac{\eta}{m}X_{i,C_i},
\end{align}
with $[\widehat g_{T,U}(X,I,Z)]_{i,j}=0$ for $j\notin C_i$, where $J_R(z):=\nabla\rho_R(z)$. By construction, we have $\widehat g_{T,U}(X,I,Z)\in\mathcal S^*$, $\nabla\widehat F_{T,U}(X)\in\mathcal S^*$, and $\widehat F_{T,U}$ does not depend on $X_{i,j}$ for $j\notin C_i$.

The next lemma transfers the bounded hardness to this compressed instance.

\begin{lemma}
\label{lem:soft-projected-hardness}
There exist universal constants $\kappa, C_\rho,\eta>0$ such that, for any $q\in(0,1]$ and $\delta\in(0,1)$, if
\begin{equation}
\label{eq:soft-hard-dimension}
    n_{\rm blk}\ge T+N_\delta+C_{\rm rot}C_\rho^2 TN_\delta\log(C_{\rm rot}mN_\delta T/\delta),
\end{equation}
then for independent Haar draws $U_1,\ldots,U_m$ from $\mathrm{St}(n_{\rm blk},T)$, every randomized algorithm $\mathsf A\in\mathcal A_{\rm rand}$ interacting with the compressed oracle $\widehat{\mathsf O}_{\widehat F_{T,U}}(X,I,Z):=(\widehat F_{T,U}(X),\widehat g_{T,U}(X,I,Z))$ satisfies
\begin{equation}
\label{eq:soft-hard-nuclear}
    \mathbb P\bigl(\|\nabla\widehat F_{T,U}(X_{\mathsf A[\widehat{\mathsf O}]}^{(t)})\|_{\mathrm{nuc}}\ge\kappa\ \forall t\le N_\delta\bigr)\ge1-\delta.
\end{equation}
\end{lemma}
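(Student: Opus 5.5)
The plan is to reduce to Lemma~\ref{lem:random-rotation-bounded}, following the soft-projection argument of \cite[Section 4]{Arjevani-2023-Lower}, applied blockwise.

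\emph{Step 1 (simulation by a bounded-query algorithm).} Given any $\mathsf A$ interacting with $\widehat{\mathsf O}$, build an algorithm $\mathsf A'$ that interacts with the rotated oracle \eqref{eq:bounded-rotated-oracle} at the points $\rho_R^{\rm blk}(X^{(t)})$. By construction $\|[\rho_R^{\rm blk}(X)]_{i,C_i}\|_2\le R$. From the returned $\widetilde g_{T,U}$ and the known $X^{(t)}$, $\mathsf A'$ recovers $\widehat g_{T,U}(X^{(t)},I,Z)$ exactly. Indeed, by \eqref{eq:compressed-oracle} this only requires multiplying each block by $J_R(X_{i,C_i})^\top$ and adding $\tfrac{\eta}{m}X_{i,C_i}$, since $\Phi_U(\rho_R^{\rm blk}(X))$ is precisely the argument of $\bar g_T$.

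Function values need more care. I would handle them as in Arjevani et al.: either note that the zero-respecting structure makes values computable from already-revealed information, or reformulate so that the oracle reveals values consistent with the chain restricted to discovered coordinates. If that is delicate, I would use that $\mathcal A_{\rm rand}$ receives values which, on the high-probability event, depend only on $U_i$ through coordinates $\le \operatorname{prog}$.

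Lemma~\ref{lem:random-rotation-bounded} with $R=C_\rho\sqrt T$ then applies, since \eqref{eq:soft-hard-dimension} is exactly \eqref{eq:bounded-rotation-dimension} with this $R$. So with probability $\ge 1-\delta$, for every $t\le N_\delta$ at least $m/2$ rows satisfy $\operatorname{prog}_{1/4}(U_i^\top \rho_R(X^{(t)}_{i,C_i}))<T$.

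\emph{Step 2 (large gradient on each unfinished row).} Fix such a row $i$ and write $z=X^{(t)}_{i,C_i}$ and $y=U_i^\top\rho_R(z)$. The claim is that $\|\nabla h_{T,U_i}(z)\|_2\ge c$ for a universal $c>0$; this is the main obstacle. I would follow \cite[Lemma 6]{Arjevani-2023-Lower} (or Carmon et al.) and split into two cases.

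If $\|z\|_2\ge R/2$, the regularizer dominates. Here $\|\eta z\|_2\gtrsim \eta C_\rho\sqrt T$, while $\|J_R(z)^\top U_i\nabla\phi_T(y)\|_2\le \|\nabla\phi_T(y)\|_2$. Using item 4 of Lemma~\ref{lem:base-chain} (support $\subseteq[\operatorname{prog}_{1/2}+1]$) together with $\|\nabla\phi_T\|_\infty\le G_{\rm ch}$, this is at most $G_{\rm ch}\sqrt T$ up to constants. Choosing $C_\rho$ large relative to $\eta^{-1}G_{\rm ch}$ makes the total gradient norm $\gtrsim\sqrt T\ge 1$.

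If $\|z\|_2<R/2$, then $J_R$ is well-conditioned and close to a scaled identity, and $\rho_R(z)\approx z$. Take $j=\operatorname{prog}_1(y)+1\le T$, for which $|\nabla_j\phi_T(y)|>1$ by item 5. Pair the gradient with the direction $J_R^{-\top}$-adjusted $U_ie_j$, using that $u_j^\top z$ is small relative to the chain scale (since $|y_j|\le 1$ and the scaling). The regularizer's component $\eta\, u_j^\top z$ is then at most $\eta\cdot O(1)$, so a small universal $\eta$ keeps the component $\gtrsim 1/2$. Here I would need $\operatorname{prog}_1\le\operatorname{prog}_{1/4}<T$, which holds.

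\emph{Step 3 (nuclear norm).} $\nabla\widehat F_{T,U}(X)\in\mathcal S^*$ has row $i$ supported on $C_i$, with these column sets disjoint. So the matrix is a direct sum of $m$ rank-one blocks, and its nuclear norm equals $\sum_i\|\tfrac1m\nabla h_{T,U_i}(X_{i,C_i})\|_2$; this follows because the row vectors are mutually orthogonal, so the singular values are exactly the row norms. By Steps 1–2 this is at least $\tfrac1m\cdot\tfrac m2\cdot c$, which gives $\kappa=c/2$ simultaneously for all $t\le N_\delta$ on the event of probability at least $1-\delta$. This proves \eqref{eq:soft-hard-nuclear}.
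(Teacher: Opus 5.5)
Your proposal is correct and is essentially the paper's proof: simulate $\mathsf A$ through the bounded rotated oracle at $\rho_R^{\rm blk}(X^{(t)})$, invoke Lemma~\ref{lem:random-rotation-bounded} with $R=C_\rho\sqrt T$, lower-bound each unfinished row's gradient by the two-case argument of Lemma~\ref{lem:single-block-soft}, and conclude with the orthogonal-rows nuclear-norm identity of Lemma~\ref{lem:norm-identity}; your pairing with $J_R(z)^{-1}U_ie_j$ rather than $U_ie_j$ is a harmless variant that removes the cross term. The function-value issue you flag is not delicate and needs none of the workarounds you list, because the rotated oracle \eqref{eq:bounded-rotated-oracle} already returns $\widetilde H_{T,U}(Y^{(t)})$ and $\widehat F_{T,U}(X)=\widetilde H_{T,U}(\rho_R^{\rm blk}(X))+\tfrac{\eta}{2m}\sum_i\|X_{i,C_i}\|_2^2$, so the simulator reconstructs values exactly, just as it does gradients.
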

With these constants fixed, the final ingredient records the deterministic properties needed to embed the construction in the desired function class $\mathcal F_{\rm{op}}(m,n,\Delta,L)$.

\begin{proposition}
\label{prop:compressed-admissibility}
There exist universal constants $\Delta_0,\ell_1,\varsigma\in(0,\infty)$ such that for every $p\in(1,2]$, $T\ge1$, $q\in(0,1]$, and $U\in\mathrm{St}(n_{\rm blk},T)^m$, we have
\begin{enumerate}
\item $\widehat F_{T,U}(0)-\inf_X\widehat F_{T,U}(X)\le\Delta_0 T$.
\item $\left\lVert \nabla\widehat F_{T,U}(X)-\nabla\widehat F_{T,U}(Y)\right\rVert_{\mathrm{nuc}}\le\ell_1\left\lVert X-Y\right\rVert_{\mathrm{op}}$.
\item $\mathbb E_{I,Z}\widehat g_{T,U}(X,I,Z)=\nabla\widehat F_{T,U}(X)$ and $\mathbb E_{I,Z}\|\widehat g_{T,U}(X,I,Z)-\nabla\widehat F_{T,U}(X)\|_{\mathrm{nuc}}^p\le\varsigma^p q^{1-p}$.
\item The map $U\mapsto\widehat F_{T,U}$ is injective on $\mathrm{St}(n_{\rm blk},T)^m$.
\end{enumerate}
\end{proposition}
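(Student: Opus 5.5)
We verify the four items separately. The plan rests on two facts: the blockwise structure $\widehat F_{T,U}=\frac1m\sum_i h_{T,U_i}(X_{i,C_i})$, and the fact that a matrix supported on $\mathcal S^*$ with at most one nonzero block per row (blocks in disjoint columns) has its nuclear norm controlled by the row norms.

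\textbf{Key norm facts.} Let $A\in\mathcal S^*$ have rows $a_i$ supported on $C_i$. The rows are then mutually orthogonal, so
\[
\|A\|_{\mathrm{nuc}}=\sum_i\|a_i\|_2 .
\]
Conversely, for any $X$,
\[
\|X_{i,C_i}\|_2\le\|X_{i,:}\|_2\le\|X\|_{\mathrm{op}} .
\]

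\textbf{Item 1 (gap).} At $X=0$ we have $\rho_R(0)=0$, so $\widehat F_{T,U}(0)=\phi_T(0)$. Since $\tfrac{\eta}{2}\|z\|_2^2\ge0$ and $U_i^\top\rho_R(z)$ ranges over a subset of $\mathbb R^T$,
\[
\inf_z h_{T,U_i}(z)\ge\inf_u\phi_T(u).
\]
Averaging over $i$ and applying Lemma~\ref{lem:base-chain}(1) gives item 1 with $\Delta_0=\Delta_{\rm ch}$.

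\textbf{Item 2 (smoothness).} This is the main step. The gradient $\nabla\widehat F_{T,U}(X)$ lies in $\mathcal S^*$ with row $i$ equal to $\frac1m\nabla h_{T,U_i}(X_{i,C_i})$. Hence
\[
\|\nabla\widehat F(X)-\nabla\widehat F(Y)\|_{\mathrm{nuc}}=\frac1m\sum_i\bigl\|\nabla h_{T,U_i}(X_{i,C_i})-\nabla h_{T,U_i}(Y_{i,C_i})\bigr\|_2 .
\]
It then suffices to show that $h_{T,U}$ is $\ell$-smooth in $\ell_2$ uniformly in $U$ and $T$. This is exactly the soft-projection lemma of \cite[Lemma 16]{Arjevani-2023-Lower}, which uses $R=C_\rho\sqrt T$, $\|\nabla\phi_T\|_2\le G_{\rm ch}\sqrt T$, and the bounds $\|J_R\|\le1$ and $\|\nabla J_R\|\lesssim1/R$. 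The $\sqrt T$ factors cancel, which yields a universal constant.

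Each summand is then at most $\ell\|X_{i,C_i}-Y_{i,C_i}\|_2\le\ell\|X-Y\|_{\mathrm{op}}$. The average gives $\ell_1=\ell$, with no factor $m$ thanks to the $1/m$ normalization. This is the main obstacle: one needs the uniform smoothness of $h_{T,U}$ and must check that the row decomposition really lets the $1/m$ cancel the sum over $m$ rows.

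\textbf{Item 3 (oracle moments).} First, unbiasedness. Since $\mathbb E[m\mathbf 1\{I=i\}Z/q]=1$, we get $\mathbb E\,\bar g_T=\nabla H_T$. The chain rule through $J_R^\top U_i$ and linearity then give $\mathbb E\,\widehat g=\nabla\widehat F$.

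Next, the deviation. The error is nonzero only in row $I$ and only on coordinates beyond $\operatorname{prog}_{1/4}$, because for the other rows the bracket has expectation-zero fluctuation $-1$. Write it row by row as
\[
\frac1m J_R^\top U_i\bigl[v_i\,(m\mathbf 1\{I=i\}Z/q-1)\bigr],
\qquad v_i:=\nabla\phi_T(W_{i,:})\odot\mathbf 1_{>\operatorname{prog}_{1/4}} .
\]
Lemma~\ref{lem:base-chain}(3) gives $\|v_i\|_2\le G_{\rm ch}$, and $\|J_R^\top U_i\|\le1$. By the nuclear-norm fact, the error's nuclear norm is at most
\[
\frac{G_{\rm ch}}{m}\sum_i\bigl|m\mathbf 1\{I=i\}Z/q-1\bigr|\le\frac{G_{\rm ch}}{m}\bigl(mZ/q+m\bigr)=G_{\rm ch}(Z/q+1).
\]
Taking the $p$-th moment,
\[
\mathbb E(Z/q+1)^p\le2^{p-1}(q^{1-p}+1)\le2^p q^{1-p}.
\]
This gives $\varsigma=2G_{\rm ch}$. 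The $\eta X/m$ terms cancel in the difference.

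\textbf{Item 4 (injectivity).} Suppose $\widehat F_{T,U}=\widehat F_{T,U'}$. Restricting to $X$ supported in block $i$ alone, with $\|X_{i,C_i}\|$ small, recovers $h_{T,U_i}$ up to constants. It therefore suffices to show $U\mapsto\phi_T(U^\top\rho_R(\cdot))$ is injective. This is argued as in \cite{Arjevani-2023-Lower}: from the gradient $U J^\top\nabla\phi_T$ evaluated along chains, using Lemma~\ref{lem:base-chain}(5), successively recover each column $u_j$ of $U$. For instance, the gradient at $z=\rho_R^{-1}(\sum_{k<j}u_k)$ is nonzero in direction $u_j$ with known sign structure. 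This determines $U_i$ for every $i$.
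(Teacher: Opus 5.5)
Your proposal follows the paper's proof essentially item by item. Item 1 uses the gap bound via $\rho_R(0)=0$. Item 2 uses the row decomposition on $\mathcal S^*$ together with uniform single-block smoothness of $h_{T,U}$. Item 3 uses the same row-wise bound from Lemma~\ref{lem:base-chain}(3) and $\|J_R\|_{\mathrm{op}}\le 1$; your $\varsigma=2G_{\rm ch}$ is slightly better than the paper's $3G_{\rm ch}$. Item 4 uses column-by-column recovery after zeroing the other blocks.

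One step in item 4 does not go through as written. At your witness point $y=\sum_{k<j}u_k$, the rotated point is $U^\top y=(1,\dots,1,0,\dots,0)$, whose entries are \emph{exactly} $1$. Since $\operatorname{prog}_1$ uses the strict inequality $|x_i|>1$, you get $\operatorname{prog}_1(U^\top y)=0$. Lemma~\ref{lem:base-chain}(5) then only gives $|\nabla_1\phi_T|>1$ and says nothing about coordinate $j$. So the listed properties do not show that the gradient has a nonzero component along $u_j$. The paper instead takes $y^{(j)}=a_j\sum_{k<j}u_k$ with $1<a_j<R/\sqrt{j-1}$, which is possible since $R=C_\rho\sqrt T\ge\sqrt T$. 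This gives $\operatorname{prog}_1=\operatorname{prog}_{1/2}=j-1$, and then:
\begin{itemize}
\item property (4) confines $\operatorname{supp}\nabla\phi_T(U^\top y^{(j)})$ to $[j]$, so the induction hypothesis $u_k=u'_k$ for $k<j$ cancels the first $j-1$ columns in $U\nabla\phi_T(U^\top y)=U'\nabla\phi_T(U'^\top y)$;
\item property (5) gives a $j$-th coefficient of modulus $>1$, which forces $u_j=u'_j$.
\end{itemize}
What the induction needs is this support property together with the nonvanishing $j$-th coordinate, not a ``known sign structure.''

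Two smaller points. First, drop the restriction to $\|X_{i,C_i}\|$ small. The witness points have norm of order $\sqrt{j-1}$, so you need the gradient identity on all of $B_R$. You get it because $\widehat F_{T,U}=\widehat F_{T,U'}$ holds globally and $\rho_R$ is a diffeomorphism of $\mathbb R^{n_{\rm blk}}$ onto $B_R$. Equivalently, the $z$-gradient is $J_R(z)^\top U\nabla\phi_T(U^\top\rho_R(z))$, and $J_R(z)$ is invertible with eigenvalues $s^{-1},s^{-3}>0$. Second, in item 3 the sentence that the error is nonzero only in row $I$ is false: every row $i$ with $I\ne i$ or $Z=0$ carries $-\tfrac1m J_R^\top U_i v_i$. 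Your displayed bound $\sum_i|m\mathbf 1\{I=i\}Z/q-1|\le mZ/q+m$ already accounts for these rows, so only the wording needs correcting.
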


\paragraph{Theorem~\ref{thm:main} (restated).}
For any $p\in(1,2]$, there exist constants $c_p,c_p',C_{\rm dim}>0$ depending only on $p$ such that for any $m,n\ge1$ and $\Delta,L,\sigma_0>0$, if $0<\epsilon\le c_p'\min\{\sqrt{\Delta L},\sigma_0\}$ and
\[
    \max\{m,n\}\ge C_{\rm dim}\min\{m,n\}^2 \tfrac{(\Delta L)^2}{\epsilon^4}(\sigma_0/\epsilon)^{p/(p-1)}\log\bigl(C_{\rm dim}\min\{m,n\}^2 \tfrac{(\Delta L)^2}{\epsilon^4}(\sigma_0/\epsilon)^{p/(p-1)}\bigr),
\]
then
\[
    \mathfrak m^{\rm rand}_{\epsilon,p}(m,n,\Delta,L,\sigma_0)\ge c_p\min\{m,n\}\Delta L\sigma_0^{p/(p-1)}\epsilon^{-(3p-2)/(p-1)}.
\]

\begin{proof}
The transposition invariance of $\|\cdot\|_{\rm op}$ and $\|\cdot\|_{\rm nuc}$ makes $\mathfrak m^{\rm rand}_{\epsilon,p}$ invariant under $(m,n)\mapsto(n,m)$, so we assume $m\le n$. Let $\kappa,\Delta_0,\ell_1,\varsigma,C_\rho$ be the constants in Proposition~\ref{prop:compressed-admissibility} and Lemma~\ref{lem:soft-projected-hardness}, set $A_2:=\kappa^2/(16\Delta_0\ell_1)$, and choose $c_p'>0$ small enough that
\begin{equation}
\label{eq:cp-prime-checks}
    c_p'\le 1,\quad c_p'\le \kappa/(4\varsigma),\quad A_2/(c_p')^2\ge 8\log 4+8.
\end{equation}
Set
\begin{equation}
\label{eq:scaling-lambda-T-q}
    \lambda:=\tfrac{4\ell_1\epsilon}{\kappa L},\quad
    T:=\bigl\lfloor\tfrac{\Delta}{\Delta_0(L\lambda^2/\ell_1)}\bigr\rfloor,\quad
    q:=\bigl(\tfrac{\varsigma L\lambda}{\ell_1\sigma_0}\bigr)^{p/(p-1)},
\end{equation}
which gives the identities $\Delta/[\Delta_0(L\lambda^2/\ell_1)]=A_2\Delta L/\epsilon^2$ and $q=(4\varsigma\epsilon/(\kappa\sigma_0))^{p/(p-1)}$. The assumption $\epsilon\le c_p'\min\{\sqrt{\Delta L},\sigma_0\}$ and \eqref{eq:cp-prime-checks} imply $0<q\le 1$, $A_2\Delta L/\epsilon^2\ge 8\log 4+8$, and hence
\begin{equation}
\label{eq:T-lower-check}
    T\ge \tfrac{A_2}{2}\tfrac{\Delta L}{\epsilon^2}\ge 4\log 4+4.
\end{equation}
Set $\delta=1/2$. Then $N_{1/2}:=\lfloor(mT-2\log 4)/(4q)\rfloor$ satisfies $N_{1/2}\ge 1$ and $N_{1/2}+1>mT/(8q)$.

For a constant $C_1$ depending only on the construction constants, the block size $n_{\rm blk}:=\lceil T+N_{1/2}+C_{\rm rot}C_\rho^2 TN_{1/2}\log(2C_{\rm rot}mN_{1/2}T)\rceil$ satisfies $mn_{\rm blk}\le C_1m^2T^2/q\cdot\log(C_1m^2T^2/q)$. By \eqref{eq:scaling-lambda-T-q} there is $C_2$ (depending only on $p$ and construction constants) with $T^2/q\le C_2(\Delta L)^2\epsilon^{-4}(\sigma_0/\epsilon)^{p/(p-1)}$. Taking $C_{\rm dim}\ge \max\{C_1C_2,e\}$ ensures $n\ge mn_{\rm blk}$, and we fix disjoint column blocks $C_1,\ldots,C_m\subseteq[n]$ of size $n_{\rm blk}$. The dimension condition in Lemma~\ref{lem:soft-projected-hardness} holds with $\delta=1/2$.

Let $\mu$ be the product Haar measure on $\mathrm{St}(n_{\rm blk},T)^m$. For $U\sim\mu$, define
\begin{equation}
\label{eq:main-def-F-g}
    F^*_{T,U}(X):=\tfrac{L\lambda^2}{\ell_1}\widehat F_{T,U}(X/\lambda),\quad
    g^*_{T,U}(X,I,Z):=\tfrac{L\lambda}{\ell_1}\widehat g_{T,U}(X/\lambda,I,Z),
\end{equation}
and let $P_F$ be the push-forward of $\mu$ by $U\mapsto F^*_{T,U}$, $P_{I,Z}$ the law of $(I,Z)$. By Proposition~\ref{prop:compressed-admissibility}(4) and \eqref{eq:main-def-F-g}, the map $U\mapsto F^*_{T,U}$ is injective, so for $F$ in the hard family $\mathcal F_{\rm hard}:=\{F^*_{T,U}:U\in\mathrm{St}(n_{\rm blk},T)^m\}$ there is a unique inverse $U(F)$. We define the oracle
\begin{equation}
\label{eq:main-oracle-scheme}
    \mathsf O_F(X,I,Z):=\begin{cases}
    (F(X),g^*_{T,U(F)}(X,I,Z)), & F\in\mathcal F_{\rm hard},\\
    (F(X),\nabla F(X)), & F\notin\mathcal F_{\rm hard},
    \end{cases}
\end{equation}
so $\mathsf O_{F^*_{T,U}}(X,I,Z)=(F^*_{T,U}(X),g^*_{T,U}(X,I,Z))$. For $F\notin\mathcal F_{\rm hard}$ admissibility is immediate, so it suffices to verify admissibility on $\mathcal F_{\rm hard}$.

Proposition~\ref{prop:compressed-admissibility}(1) and \eqref{eq:scaling-lambda-T-q} give $F^*_{T,U}(0)-\inf_X F^*_{T,U}(X)\le (L\lambda^2/\ell_1)\Delta_0 T\le\Delta$. Proposition~\ref{prop:compressed-admissibility}(2) yields $\|\nabla F^*_{T,U}(X)-\nabla F^*_{T,U}(Y)\|_{\rm nuc}\le L\|X-Y\|_{\rm op}$, so $P_F\in\mathcal P(\mathcal F_{\rm op}(m,n,\Delta,L))$. For the noise moment, Proposition~\ref{prop:compressed-admissibility}(3) and \eqref{eq:scaling-lambda-T-q} give
\[
    \mathbb E_{I,Z}\|g^*_{T,U}(X,I,Z)-\nabla F^*_{T,U}(X)\|_{\rm nuc}^p
    \le (L\lambda/\ell_1)^p\varsigma^p q^{1-p}=\sigma_0^p.
\]
Thus $\mathsf O\in\mathcal O_p(\sigma_0)$.

Fix $\mathsf A\in\mathcal A_{\rm rand}$. Rescaling queries by $\lambda^{-1}$ turns the trajectory of $\mathsf A$ on $\mathsf O$ into that of a randomized algorithm $\mathsf B$ on the compressed oracle $\widehat{\mathsf O}$. Lemma~\ref{lem:soft-projected-hardness} applied to $\mathsf B$ with $\delta=1/2$ gives, with probability at least $1/2$,
\[
    \|\nabla\widehat F_{T,U}(X_{\mathsf A[\mathsf O]}^{(t)}/\lambda)\|_{\rm nuc}\ge\kappa\quad\forall t\le N_{1/2},
\]
and multiplication by $L\lambda/\ell_1=4\epsilon/\kappa$ gives $\|\nabla F^*_{T,U}(X_{\mathsf A[\mathsf O]}^{(t)})\|_{\rm nuc}\ge 4\epsilon$ on the same event. Hence $$\mathbb E\|\nabla F^*_{T,U}(X_{\mathsf A[\mathsf O]}^{(t)})\|_{\rm nuc}\ge 2\epsilon>\epsilon$$ for every $t\le N_{1/2}$, and consequently
\[
\inf_{\mathsf A}\inf\bigl\{N\in\mathbb N:\mathbb E_{F\sim P_F,\mathsf A,\mathsf{O}_F}\|\nabla F(X_{\mathsf A[\mathsf O_F]}^{(N)})\|_{\rm nuc}\le\epsilon\bigr\}\ge N_{1/2}+1.
\]
Combining with $N_{1/2}+1>mT/(8q)\ge \tfrac{m}{8}\cdot\tfrac{A_2}{2}\tfrac{\Delta L}{\epsilon^2}(\kappa\sigma_0/(4\varsigma\epsilon))^{p/(p-1)}$ yields the claimed lower bound $c_p m\Delta L\sigma_0^{p/(p-1)}\epsilon^{-(3p-2)/(p-1)}$.
\end{proof}

\subsection{Proof of Lemma~\ref{lem:random-rotation-bounded}}
\label{sec:proof-bounded}

To begin, we introduce some notation. Let $\theta$ be the algorithm's internal seed and write $X^{(t)}\in\mathbb R^{m\times n}$ for the round-$t$ query, set $\xi^{(t)}:=(I^{(t)},Z^{(t)})$, and define $\Gamma_{t,i}:=\max\bigl(\{j\le T:\exists s\le t,\,[\bar g_T(\Phi_U(X^{(s)}),\xi^{(s)})]_{i,j}\ne 0\}\cup\{0\}\bigr)$ with $\Gamma_{0,i}=0$. Let
\[
    \mathcal E_i:=\sigma(\theta,\{U_a:a\ne i\},\{\xi^{(t)}:t\le N_\delta\}),\quad
    \mathsf S_{j,i}^{(s)}:=\operatorname{span}\bigl(\{U_ie_\ell:\ell\le j\}\cup\{X_{i,C_i}^{(a)}:a\le s\}\bigr),
\]
and let $P_j^{(s)}$ be the orthogonal projection onto $(\mathsf S_{j,i}^{(s)})^\perp$, $\Pi_j^{(s)}:=I-P_j^{(s)}$. Define
\[
    \mathcal U_{j,i}^{(s)}:=\mathcal E_i\vee\sigma\bigl(\{U_ie_\ell:\ell\le j\},\{U_i^\top X_{i,C_i}^{(a)}:a\le s\}\bigr),
\]
\[
    \mathcal T_{j,i}^{(s-1)}:=\{j>\Gamma_{s-1,i}\},\quad
    \mathcal G_{j,i}^{(s)}:=\bigl\{\|\Pi_{j-1}^{(s)}P_{j-1}^{(s-1)}U_ie_j\|_2<\tfrac{1}{4R\sqrt{N_\delta}}\bigr\}.
\]
Here, $\mathcal T_{j,i}^{(s-1)}$ is the event that coordinate $j$ is still unrevealed, while $\mathcal G_{j,i}^{(s)}$ controls how much the new query leaks about the residual part of $U_ie_j$.

On $\mathcal T_{j,i}^{(s-1)}$, every previous row-$i$ response is supported on $[j-1]$ (by definition of $\Gamma_{s-1,i}$), so
\begin{equation}
\label{eq:block-r-response-reconstruction}
    [\widetilde g_{T,U}(X^{(a)},\xi^{(a)})]_{i,C_i}=\sum_{\ell=1}^{j-1}[\bar g_T(\Phi_U(X^{(a)}),\xi^{(a)})]_{i,\ell}U_ie_\ell\quad(a<s).
\end{equation}
Function values factor as $\widetilde H_{T,U}(X^{(a)})=m^{-1}\phi_T(U_i^\top X_{i,C_i}^{(a)})+m^{-1}\sum_{i'\ne i}\phi_T(U_{i'}^\top X_{i',C_{i'}}^{(a)})$. Since the first query depends only on $\theta$, induction over rounds shows the queries $\{X_{i,C_i}^{(a)}\}_{a\leq s}$ are $\mathcal U_{j-1,i}^{(s-1)}$-measurable on $\mathcal T_{j,i}^{(s-1)}$, and hence so are $\Gamma_{s-1,i}$, $\mathcal T_{j,i}^{(s-1)}$, $P_{j-1}^{(s-1)}$, and $\Pi_{j-1}^{(s)}$. 

We extract key linear algebra components from \cite[Lemma 12]{Arjevani-2023-Lower} and adapt them to our setting.

\begin{lemma}
\label{lem:abstract-projection}
Let $V$ be a Euclidean space, $u\in V$ with $\|u\|_2\le 1$, and $S^{(0)}\subseteq\cdots\subseteq S^{(N)}$ a nested chain with $u\in(S^{(0)})^\perp$. Let $\Pi^{(r)}$ and $P^{(r)}=I-\Pi^{(r)}$ project onto $S^{(r)}$ and $(S^{(r)})^\perp$. If $\|\Pi^{(a)}P^{(a-1)}u\|_2<1/(4R\sqrt N)$ for all $a\le s$, then $|\langle u,x\rangle|<\|x\|_2\sqrt{r/N}/(4R)$ for every $r\le s$ and $x\in S^{(r)}$. Therefore, $\bigcap_{a\le s}\mathcal G_{j,i}^{(a)}$ implies $|\langle U_ie_j,X_{i,C_i}^{(s)}\rangle|<\|X_{i,C_i}^{(s)}\|_2\sqrt{s/N_\delta}/(4R)\le 1/4$.
\end{lemma}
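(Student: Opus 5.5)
We prove the abstract statement by decomposing $u$ along the nested chain, and then obtain the consequence for $\mathcal G_{j,i}^{(a)}$ by specializing $u$, the subspaces $S^{(r)}$ and the index $s$.

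\textbf{Abstract statement.} Fix $r\le s$ and $x\in S^{(r)}$. Write the orthogonal decomposition of $S^{(r)}$ along the chain as
\[
S^{(r)}=S^{(0)}\oplus\bigoplus_{a=1}^r D_a,\qquad D_a:=S^{(a)}\cap (S^{(a-1)})^\perp,
\]
and split $x=x_0+\sum_{a=1}^r x_a$ with $x_0\in S^{(0)}$ and $x_a\in D_a$. These pieces are mutually orthogonal, so $\sum_a\|x_a\|_2^2\le\|x\|_2^2$. Since $u\perp S^{(0)}$, we get $\langle u,x_0\rangle=0$.

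For $a\ge1$, the vector $x_a$ lies in $S^{(a)}$ and is orthogonal to $S^{(a-1)}$. Hence $x_a=\Pi^{(a)}P^{(a-1)}x_a$, and because $\Pi^{(a)}P^{(a-1)}$ is self-adjoint on $D_a$ (both projections fix $D_a$),
\[
\langle u,x_a\rangle=\langle \Pi^{(a)}P^{(a-1)}u,\,x_a\rangle .
\]
Commuting the projections requires care. Nestedness gives $\Pi^{(a)}P^{(a-1)}=P^{(a-1)}\Pi^{(a)}$, and this operator is exactly the orthogonal projection onto $D_a$. So $\langle u,x_a\rangle=\langle\Pi^{(a)}P^{(a-1)}u,x_a\rangle$ holds with no slack. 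Cauchy--Schwarz then gives $|\langle u,x_a\rangle|<\|x_a\|_2/(4R\sqrt N)$ whenever $x_a\ne0$. Summing over $a$ and applying Cauchy--Schwarz to the $r$ terms,
\[
|\langle u,x\rangle|<\frac{1}{4R\sqrt N}\sum_{a=1}^r\|x_a\|_2\le\frac{\sqrt r}{4R\sqrt N}\,\|x\|_2 .
\]
If $x\in S^{(0)}$, the inner product is $0$ and the strict inequality still holds whenever $x\ne0$; the case $x=0$ is trivial with $\le$. This degenerate case is the only wrinkle, and it is handled by this convention.

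\textbf{Consequence for $\mathcal G_{j,i}^{(a)}$.} Take $u=U_ie_j$, which is a unit vector, and $N=N_\delta$. For the chain, take $S^{(r)}=\mathsf S^{(r)}_{j-1,i}$ for $r=0,\dots,s$, where $S^{(0)}$ contains the first query span. These subspaces are nested because the spans only grow with $r$.

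We need $u\perp S^{(0)}$. The vector $U_ie_j$ is orthogonal to $U_ie_\ell$ for $\ell<j$, since $U_i$ has orthonormal columns. The index convention must therefore be arranged so that the starting subspace contains only these directions; equivalently, we start the chain at $S^{(-1)}=\operatorname{span}\{U_ie_\ell:\ell\le j-1\}$ and let $\mathcal G^{(a)}$ for $a\ge 0$ control each added query.

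Under this convention, $\bigcap_{a\le s}\mathcal G_{j,i}^{(a)}$ is precisely the hypothesis of the abstract statement. Applying it with $x=X^{(s)}_{i,C_i}\in S^{(s)}$ yields the first bound. Finally, the bounded-query assumption $\|X_{i,C_i}^{(s)}\|_2\le R$ together with $s\le N_\delta$ gives $\|X^{(s)}_{i,C_i}\|_2\sqrt{s/N_\delta}/(4R)\le1/4$.

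The main obstacle is this bookkeeping of the chain's indices. The abstract argument itself is short; the care lies in matching the paper's $P_{j-1}^{(s-1)}$ and $\Pi_{j-1}^{(s)}$ conventions to a nested chain with $u\perp S^{(0)}$.
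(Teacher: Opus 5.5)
Your argument is correct and is essentially the paper's proof applied to the other side of the inner product. The paper telescopes $\Pi^{(r)}=\Pi^{(0)}+\sum_{1\le a\le r}P^{(a-1)}\Pi^{(a)}P^{(a-1)}$, uses Pythagoras to get $\|\Pi^{(r)}u\|_2^2<r/(16R^2N)$, and then applies one Cauchy--Schwarz; you instead split $x$ over the same orthogonal pieces $D_a$ and apply Cauchy--Schwarz across the $r$ terms. Your remark that $\Pi^{(a)}P^{(a-1)}$ is exactly the projection onto $D_a$ also shows the paper's inequality in that step is an equality.

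One bookkeeping correction: queries are indexed from $t=1$, so $\mathsf S^{(0)}_{j-1,i}=\operatorname{span}\{U_ie_\ell:\ell\le j-1\}$ contains no query and is already orthogonal to $U_ie_j$. Your first choice $S^{(r)}=\mathsf S^{(r)}_{j-1,i}$, $r=0,\dots,s$, therefore works as is with exactly $s$ increments. The proposed shift to $S^{(-1)}$ with a query at index $0$ would add an increment and only give $\sqrt{(s+1)/N_\delta}$.
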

\begin{proof}
Nesting gives $\Pi^{(i')}=\Pi^{(i'-1)}+P^{(i'-1)}\Pi^{(i')}P^{(i'-1)}$. Iterating, we get $$\Pi^{(i)}=\Pi^{(0)}+\sum_{i'\le i}P^{(i'-1)}\Pi^{(i')}P^{(i'-1)}.$$ The summands lie in pairwise orthogonal subspaces $S^{(i')}\cap(S^{(i'-1)})^\perp$, and $\Pi^{(0)}u=0$ since $u\in(S^{(0)})^\perp$. Hence, $\|\Pi^{(i)}u\|_2^2=\sum_{i'\le i}\|P^{(i'-1)}\Pi^{(i')}P^{(i'-1)}u\|_2^2\le\sum_{i'\le i}\|\Pi^{(i')}P^{(i'-1)}u\|_2^2<i/(16R^2N)$. For $x\in S^{(i)}$, $|\langle u,x\rangle|=|\langle \Pi^{(i)}u,x\rangle|\le\|\Pi^{(i)}u\|_2\|x\|_2$.
\end{proof}

We follow the techniques from \cite[Lemma 14]{Arjevani-2023-Lower} to show the conditional uniformity in our block-wise setting.
\begin{lemma}
\label{lem:block-conditional-uniformity}
Assume $\mathbb P(\mathcal T_{j,i}^{(s-1)}\mid\mathcal U_{j-1,i}^{(s-1)})>0$.  Conditional on $\mathcal U_{j-1,i}^{(s-1)}$ and on the event $\mathcal T_{j,i}^{(s-1)}$, the distribution of $P_{j-1}^{(s-1)}U_ie_j$  is invariant under every orthogonal transformation of the subspace $(\mathsf S_{j-1,i}^{(s-1)})^\perp$.
\end{lemma}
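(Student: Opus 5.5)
The plan is to adapt the proof of \cite[Lemma 14]{Arjevani-2023-Lower} to a single row block $i$, treating the other blocks and the oracle randomness as frozen side information. Fix an orthogonal map $Q$ of $(\mathsf S_{j-1,i}^{(s-1)})^\perp$ and extend it by the identity on $\mathsf S_{j-1,i}^{(s-1)}$. This gives an orthogonal $\widetilde Q$ on $\mathbb R^{n_{\rm blk}}$, which is $\mathcal U_{j-1,i}^{(s-1)}$-measurable because $P_{j-1}^{(s-1)}$ is measurable on $\mathcal T_{j,i}^{(s-1)}$. Define the transformed rotation $U_i'$ by $U_i'e_\ell=U_ie_\ell$ for $\ell\le j-1$ and $U_i'e_\ell=\widetilde Q U_ie_\ell$ for $\ell\ge j$. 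The goal is to show that the pair $(U_i,\text{conditioning information})$ and $(U_i',\text{same information})$ have the same joint law restricted to $\mathcal T_{j,i}^{(s-1)}$. Since $P_{j-1}^{(s-1)}U_i'e_j=Q P_{j-1}^{(s-1)}U_ie_j$, this yields the claimed invariance.

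\textbf{Step 1 (Haar invariance given the first $j-1$ columns).} Since $U_i$ is Haar on $\mathrm{St}(n_{\rm blk},T)$ and independent of $\mathcal E_i$, conditional on $\{U_ie_\ell\}_{\ell\le j-1}$ the remaining columns are Haar-distributed on the orthogonal complement of those columns. Any orthogonal map fixing $\operatorname{span}\{U_ie_\ell:\ell\le j-1\}$ preserves this conditional law. Our $\widetilde Q$ fixes $\mathsf S_{j-1,i}^{(s-1)}$, which contains that span. The subtlety is that $\widetilde Q$ itself depends on the queries, and the queries depend on $U_i$. This is handled exactly as in Arjevani et al.: argue deterministically on trajectories.

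\textbf{Step 2 (trajectory invariance).} Fix $\theta$, $\{U_a\}_{a\ne i}$, and $\{\xi^{(t)}\}$. I would show by induction over rounds $a\le s$ that, on $\mathcal T_{j,i}^{(s-1)}$, running the algorithm with $U_i$ or with $U_i'$ produces identical queries $X^{(1)},\dots,X^{(s)}$ and identical oracle outputs up to round $s-1$. The function values only involve $U_i^\top X^{(a)}_{i,C_i}$ and the other blocks. By \eqref{eq:block-r-response-reconstruction}, the row-$i$ gradient responses involve only $U_ie_\ell$ for $\ell\le j-1$, together with the coefficients $[\bar g_T]_{i,\ell}$. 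These coefficients depend on $U_i^\top X^{(a)}_{i,C_i}$ (through $\nabla\phi_T$ and $\operatorname{prog}_{1/4}$) and on $\xi^{(a)}$; the other rows depend only on $U_{a'}$ for $a'\ne i$. Since each $X^{(a)}_{i,C_i}$ lies in $\mathsf S_{j-1,i}^{(s-1)}$, we get $U_i'^\top X^{(a)}_{i,C_i}=U_i^\top \widetilde Q^\top X^{(a)}_{i,C_i}=U_i^\top X^{(a)}_{i,C_i}$ for the columns $\ge j$; the columns $<j$ agree trivially. Hence all observations coincide, the event $\mathcal T_{j,i}^{(s-1)}$ holds for $U_i'$ as well, and the $\sigma$-algebra generators of $\mathcal U_{j-1,i}^{(s-1)}$ take identical values. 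Note that Lemma~\ref{lem:base-chain}(4) is what guarantees that the relevant support is at most $\Gamma_{s-1,i}<j$.

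\textbf{Step 3 (conclude).} Combining Steps 1 and 2, the map $U_i\mapsto U_i'$ preserves the law of $U_i$ conditional on $\mathcal E_i$ and the first $j-1$ columns. It also leaves the event $\mathcal T_{j,i}^{(s-1)}$ and every generator of $\mathcal U_{j-1,i}^{(s-1)}$ unchanged. For any bounded measurable $f$ and any $\mathcal U_{j-1,i}^{(s-1)}$-measurable $A\subseteq \mathcal T_{j,i}^{(s-1)}$, this gives
\[
\mathbb E\bigl[f(P_{j-1}^{(s-1)}U_ie_j)\mathbf 1_A\bigr]=\mathbb E\bigl[f(QP_{j-1}^{(s-1)}U_ie_j)\mathbf 1_A\bigr].
\]
Dividing by the positive conditional probability yields the conditional invariance.

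The main obstacle is Step 2: making the dependence of $\widetilde Q$ on $U_i$ rigorous. To handle it, I would first fix $Q$ as a measurable function of the conditioning data. Then I would verify that the data computed from $U_i'$ equal the data computed from $U_i$, so that $Q$ is the same in both. This self-consistency makes $U_i\mapsto U_i'$ a measure-preserving involution-like bijection on the relevant event.
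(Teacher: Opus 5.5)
Your proposal follows the paper's proof. You extend $Q$ by the identity on $\mathsf S_{j-1,i}^{(s-1)}$ and show by induction over rounds that replacing $U_i$ with $\widetilde Q U_i$ leaves all earlier queries, the generators of $\mathcal U_{j-1,i}^{(s-1)}$, and the event $\mathcal T_{j,i}^{(s-1)}$ unchanged; you then combine this with left-invariance of the conditional Haar law of $U_ie_j,\ldots,U_ie_T$ given the first $j-1$ columns. The paper writes your Step~3 as a Bayes-rule identity: the posterior density of $U_{i,\ge j}$ is a consistency indicator times the conditional Haar density, both invariant under $\widetilde Q$, after which it marginalizes to $U_ie_j$. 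This is the same argument as your measure-preserving-bijection phrasing, at the same level of rigor.

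One small correction. On $\mathcal T_{j,i}^{(s-1)}$, the earlier row-$i$ responses are supported in $[j-1]$ simply by the definition of $\Gamma_{s-1,i}$, not by Lemma~\ref{lem:base-chain}(4).
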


\begin{proof}
    We write $U_{i,<j}:=[U_i e_1,\ldots,U_i e_{j-1}]$, $U_{i,\ge j}:=[U_i e_j,\ldots,U_i e_T]$, $\mathcal U:=\mathcal U_{j-1,i}^{(s-1)}$, and $\mathcal T:=\mathcal T_{j,i}^{(s-1)}$.  Let $p_{\rm rot}$ denote the Haar law of $U_i$ on $\mathrm{St}(n_{\rm blk},T)$, with marginal density $p_{{\rm rot},<j}$ for the first $j-1$ columns and conditional probability $p_{{\rm rot},\ge j\mid <j}$ for the last $T-j+1$ columns given the first $j-1$, and hence
\begin{equation}
\label{eq:haar-disintegration}
    p_{\rm rot}(U_i)
    =
    p_{{\rm rot},<j}(U_{i,<j})\,
    p_{{\rm rot},\ge j\mid<j}(U_{i,\ge j}\mid U_{i,<j}).
\end{equation}
We define $Y^{(<s)}:=\bigl\{U_i^\top X^{(a)}_{i,C_i}:1\le a<s\bigr\}$. The factor denoted below by $\widetilde p^{(<s)}(Y^{(<s)},\mathcal T\mid U_i,\mathcal E_i)$ is a Dirac delta of $(Y^{(<s)},\mathbf{1}_{\mathcal T})$ given $(U_i,\mathcal E_i)$. Once $U_i$ and $\mathcal E_i$ are fixed, $(Y^{(<s)},\mathbf{1}_{\mathcal T})$ is deterministic. Moreover, let $\widetilde p^{(<s)}_{<j}(Y^{(<s)},\mathcal T\mid U_{i,<j},\mathcal E_i)$ denote the conditional probability density of $Y^{(<s)},\mathcal T$ given $U_{i,<j}$ and $\mathcal E_i$. Bayes' rule gives
\begin{equation}
\label{eq:block-bayes-density}
    p_{\ge j}(U_{i,\ge j}\mid \mathcal U,\mathcal T)
    =
    \frac{
        \widetilde p^{(<s)}(Y^{(<s)},\mathcal T\mid U_i,\mathcal E_i)\,
        p_{{\rm rot},\ge j\mid<j}(U_{i,\ge j}\mid U_{i,<j})
    }{
        \widetilde p^{(<s)}_{<j}(Y^{(<s)},\mathcal T\mid U_{i,<j},\mathcal E_i)
    },
\end{equation}
where the prior density $p_{{\rm rot},<j}(U_{i,<j})$ appears in both numerator
and denominator and has cancelled.

Let $W$ be any orthogonal map of $\mathbb{R}^{n_{\rm blk}}$ that fixes $\mathsf S_{j-1,i}^{(s-1)}$ pointwise, i.e.,
\begin{equation}
\label{eq:block-W-fixes-S}
    W^\top W=I,\ Wz=z,\ \forall z\in \{U_i e_\ell:1\le\ell\le j-1\}\cup\{X^{(a)}_{i,C_i}:1\le a\le s-1\}.
\end{equation}
We set $U_i':=WU_i$, leaving
all other $\{U_{i'}\}_{i'\not=i}$, the oracle seeds $\xi^{(<s)}$, and the algorithm seed $\theta$ unchanged.  Let $X^{\prime(a)}$ be the matrix queries produced in this alternative execution. We first prove, by induction over rounds, that the full previous matrix queries agree, given by
\begin{equation}
\label{eq:block-induction-claim}
    X^{\prime(a)}=X^{(a)},\quad \forall a<s.
\end{equation}
If $s=1$, there are no previous rounds to compare.  Otherwise, the first query depends only on $\theta$, so the claim holds for $a=1$. We assume it holds through round $a-1$, where $2\le a\le s-1$.  Since the row-$i$ blocks $X^{(a')}_{i,C_i}$ with $a'<a$ belong to $\mathsf S_{j-1,i}^{(s-1)}$ and $W$ fixes $\mathsf S_{j-1,i}^{(s-1)}$, we have
\[
    (U_i')^\top X^{\prime(a')}_{i,C_i}
    =(WU_i)^\top X^{(a')}_{i,C_i}
    =U_i^\top W^\top X^{(a')}_{i,C_i}
    =U_i^\top X^{(a')}_{i,C_i},
    \quad \forall a'<a.
\]
Matrices $\{U_{i'}\}_{i'\not=i}$ are unchanged, and by the induction hypothesis
the corresponding query rows are also unchanged.  Hence, we have
\[
    \Phi_{U'}(X^{\prime(a')})=\Phi_U(X^{(a')}), \quad \forall a'<a.
\]
Therefore, the unrotated function values and unrotated stochastic gradient responses agree for all previous queries. The rotated function values also agree. The rotated stochastic gradients for rows $i'\ne i$ agree because the unrotated stochastic gradients agree.  For row $i$, if the unrotated stochastic gradient is $c\in\mathbb{R}^T$, then on $\mathcal T=\{j>\Gamma_{s-1,i}\}$ every previous row-$i$ stochastic gradient is supported on $[j-1]$.  Thus $U_i c\in\operatorname{span}\{U_i e_\ell:1\le\ell\le j-1\}\subseteq\mathsf S_{j-1,i}^{(s-1)}$, and $U_i'c=WU_i c=U_i c$. So the full oracle responses through round $a-1$ are identical in the two executions. Since the algorithm is a measurable function of $\theta$ and the previous responses, it produces the same round-$a$ matrix queries. This closes the induction and proves \eqref{eq:block-induction-claim}.

It follows from \eqref{eq:block-induction-claim} and $W|_{\mathsf S_{j-1,i}^{(s-1)}}=I$ that
$Y^{(<s)}$ is unchanged when $U_i$ is replaced by $WU_i$.  The same argument
also shows that the event $\mathcal T=\{j>\Gamma_{s-1,i}\}$ is unchanged. Hence, we have
\begin{equation}
\label{eq:block-consistency-invariance}
    \widetilde p^{(<s)}(Y^{(<s)},\mathcal T\mid WU_i,\mathcal E_i)
    =
    \widetilde p^{(<s)}(Y^{(<s)},\mathcal T\mid U_i,\mathcal E_i).
\end{equation}
Moreover, because $W$ fixes $U_{i,<j}$, we have
\begin{equation}
\label{eq:block-haar-left-invariance}
    p_{{\rm rot},\ge j\mid<j}(W U_{i,\ge j}\mid U_{i,<j})
    =
    p_{{\rm rot},\ge j\mid<j}(U_{i,\ge j}\mid U_{i,<j}).
\end{equation}
The denominator in \eqref{eq:block-bayes-density} depends on the conditioning
only through $U_{i,<j}$ and $\mathcal E_i$, so it is also unchanged.  Combining
\eqref{eq:block-bayes-density}, \eqref{eq:block-consistency-invariance}, and
\eqref{eq:block-haar-left-invariance} gives
\begin{equation}
\label{eq:block-tail-density-invariance}
    p_{\ge j}(W U_{i,\ge j}\mid \mathcal U,\mathcal T)
    =
    p_{\ge j}(U_{i,\ge j}\mid \mathcal U,\mathcal T).
\end{equation}

Conditional on \(\mathcal U\) and \(\mathcal T\), let \(p_j(U_ie_j\mid\mathcal U,\mathcal T)\)
denote the marginal density of \(U_ie_j\), obtained from the joint
conditional density of \(U_{i,\ge j}\) by
integrating out \(U_{i,\ge j+1}\):
\[
    p_j(U_ie_j\mid\mathcal U,\mathcal T)
    =\int p_{\ge j}\bigl(U_{i,\geq j}\bigm|\mathcal U,\mathcal T\bigr)
    \,d(U_{i,\geq j+1}).
\]
Let \(W\) be any orthogonal map of \(\mathbb R^{n_{\rm blk}}\) fixing \(\mathsf S_{j-1,i}^{(s-1)}\)
pointwise.  Applying \eqref{eq:block-tail-density-invariance} to the matrix
\(WU_i=[U_{i,<j},WU_ie_j,WU_ie_{j+1},\ldots,WU_ie_T]\) and recalling that \(W\) fixes \(U_{i,<j}\), we have
\[
    p_{\ge j}\bigl([WU_ie_j,WU_ie_{j+1},\ldots,WU_ie_T]\bigm|\mathcal U,\mathcal T\bigr)
    =p_{\ge j}\bigl([U_ie_j,U_ie_{j+1},\ldots,U_ie_T]\bigm|\mathcal U,\mathcal T\bigr).
\]
The substitution \(d(U_{i,\geq j+1})\mapsto(WU_ie_{j+1},\ldots,WU_ie_{T})\) is
an orthogonal transformation of \(\mathbb R^{n_{\rm blk}}\times\cdots\times\mathbb R^{n_{\rm blk}}\),
so it preserves the integration measure
\(d(U_{i,\geq j+1})\).  Changing variables in the integral on the
right-hand side gives
\begin{align*}
    p_j(WU_ie_j\mid\mathcal U,\mathcal T)
    &=\int p_{\ge j}\bigl([WU_ie_j,u_{j+1}',\ldots,u_T']\bigm|\mathcal U,\mathcal T\bigr)
    \,d(u_{j+1}',\ldots,u_T')\\
    &=\int p_{\ge j}\bigl([WU_ie_j,WU_ie_{j+1},\ldots,WU_ie_T]\bigm|\mathcal U,\mathcal T\bigr)
    \,d(U_{i,\geq j+1})\\
    &=\int p_{\ge j}\bigl(U_{i,\geq j}\bigm|\mathcal U,\mathcal T\bigr)
    \,d(U_{i,\geq j+1})
    =p_j(U_ie_j\mid\mathcal U,\mathcal T),
\end{align*}
where the second equality is the change of variables \(u_\ell'=WU_ie_\ell\)
for \(\ell\ge j+1\), and the third equality is
\eqref{eq:block-tail-density-invariance} applied to the integrand.

Hence the conditional law of \(U_ie_j\) given \(\mathcal U\) and \(\mathcal T\) is
invariant under \(u\mapsto Wu\) for every orthogonal map \(W\) of
\(\mathbb R^{n_{\rm blk}}\) fixing \(\mathsf S_{j-1,i}^{(s-1)}\) pointwise.  Since any orthogonal map
\(Q\) of \((\mathsf S_{j-1,i}^{(s-1)})^\perp\) extends to such a \(W\) by acting as the identity
on \(\mathsf S_{j-1,i}^{(s-1)}\), and since \(W\) fixes \(\mathsf S_{j-1,i}^{(s-1)}\) and acts as \(Q\) on \((\mathsf S_{j-1,i}^{(s-1)})^\perp\), we have
\(P_{j-1}^{(s-1)}(WU_ie_j)=W(P_{j-1}^{(s-1)}U_ie_j)=Q(P_{j-1}^{(s-1)}U_ie_j)\). Thus, the conditional law of \(P_{j-1}^{(s-1)}U_ie_j\) is
invariant under every orthogonal transformation of \((\mathsf S_{j-1,i}^{(s-1)})^\perp\). 
\end{proof}
The following lemma establishes an upper bound for the information leaked in one step.
\begin{lemma}
\label{lem:one-step-leakage}
For every $i\in[m]$, $1\le s\le N_\delta$ and $1\le j\le T$, we have
\begin{equation}
\label{eq:one-step-leakage}
    \mathbb P\bigl((\mathcal G_{j,i}^{(s)})^c\cap\mathcal T_{j,i}^{(s-1)}\bigr)\le 2\exp\!\bigl(-\tfrac{n_{\rm blk}-s-j}{64R^2N_\delta}\bigr).
\end{equation}
\end{lemma}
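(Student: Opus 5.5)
We condition on $\mathcal U:=\mathcal U_{j-1,i}^{(s-1)}$ and work on the event $\mathcal T:=\mathcal T_{j,i}^{(s-1)}$, following the proof of \cite[Lemma 13]{Arjevani-2023-Lower}. If $\mathbb P(\mathcal T\mid\mathcal U)=0$ there is nothing to prove, so we assume it is positive. Then it suffices to show
\[
\mathbb P\bigl((\mathcal G_{j,i}^{(s)})^c\mid \mathcal U,\mathcal T\bigr)\le 2\exp\!\bigl(-\tfrac{n_{\rm blk}-s-j}{64R^2N_\delta}\bigr)
\]
almost surely, and afterwards integrate against $\mathbb P(\mathcal T\mid\mathcal U)\le 1$.

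The key observation comes from the measurability discussion preceding Lemma~\ref{lem:abstract-projection}. On $\mathcal T$, the query block $X_{i,C_i}^{(s)}$ is $\mathcal U$-measurable, because the queries $\{X_{i,C_i}^{(a)}\}_{a\le s}$ are $\mathcal U_{j-1,i}^{(s-1)}$-measurable there. The projections $P_{j-1}^{(s-1)}$ and $\Pi_{j-1}^{(s)}$ are therefore fixed under this conditioning. Moreover, $\Pi_{j-1}^{(s)}P_{j-1}^{(s-1)}$ restricted to $(\mathsf S_{j-1,i}^{(s-1)})^\perp$ is the projection onto the subspace spanned by $P_{j-1}^{(s-1)}X_{i,C_i}^{(s)}$. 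That subspace has dimension at most one and is a fixed direction $w$ under the conditioning; if $w=0$, the event $\mathcal G_{j,i}^{(s)}$ holds trivially.

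Set $v:=P_{j-1}^{(s-1)}U_ie_j$. Then $\|\Pi_{j-1}^{(s)}P_{j-1}^{(s-1)}U_ie_j\|_2=|\langle v,w\rangle|$ with $\|w\|_2=1$. By Lemma~\ref{lem:block-conditional-uniformity}, the conditional law of $v$ is invariant under all orthogonal maps of $(\mathsf S_{j-1,i}^{(s-1)})^\perp$. Writing $v=\|v\|_2\,\theta$, where $\theta$ is uniform on the unit sphere of that subspace and independent of $\|v\|_2$, gives
\[
|\langle v,w\rangle|\le |\langle\theta,w\rangle|,
\]
since $\|v\|_2\le\|U_ie_j\|_2=1$.

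The dimension $d$ of $(\mathsf S_{j-1,i}^{(s-1)})^\perp$ is at least $n_{\rm blk}-(j-1)-(s-1)\ge n_{\rm blk}-s-j$. We then apply the standard spherical concentration bound $\mathbb P(|\langle\theta,w\rangle|\ge r)\le 2\exp(-dr^2/2)$ with $r=1/(4R\sqrt{N_\delta})$. This yields the exponent $-d/(32R^2N_\delta)$, which is stronger than the claimed $-(n_{\rm blk}-s-j)/(64R^2N_\delta)$. The extra factor of 2 in the denominator gives slack for whatever version of the cap bound we use, for example $2e^{-dr^2/4}$. When $n_{\rm blk}-s-j\le 0$, the right-hand side is at least $2$ and the bound is trivial.

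The main obstacle is making the conditioning rigorous. We must check that $w$ is genuinely determined by $\mathcal U$ on $\mathcal T$, so that the round-$s$ query cannot depend on $U_ie_j$ beyond what $\mathcal U$ reveals. Here the fact that responses before round $s$ are supported on $[j-1]$, from \eqref{eq:block-r-response-reconstruction}, is essential. We must also handle the conditioning on an event of possibly small probability, as in Lemma~\ref{lem:block-conditional-uniformity}. The concentration step itself is routine.
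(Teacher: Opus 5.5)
Your proposal is correct and follows essentially the same argument as the paper. Both condition on $\mathcal U_{j-1,i}^{(s-1)}$ on the event $\mathcal T_{j,i}^{(s-1)}$, observe that $\Pi_{j-1}^{(s)}$ restricted to the range of $P_{j-1}^{(s-1)}$ is a projection of rank at most one, invoke Lemma~\ref{lem:block-conditional-uniformity} together with $\|P_{j-1}^{(s-1)}U_ie_j\|_2\le 1$ to dominate by a uniform spherical vector, and apply the spherical cap bound in dimension $d_0\ge n_{\rm blk}-s-j$.
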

\begin{proof}
If $n_{\rm blk}-s-j\le 0$, then the right-hand side is at least $2$. Hence, we assume $n_{\rm blk}-s-j>0$. Write $P:=P_{j-1}^{(s-1)}$ and $\Pi:=\Pi_{j-1}^{(s)}$. Since $\mathsf S_{j-1,i}^{(s-1)}$ is spanned by at most $(j-1)+(s-1)$ vectors, $d_0:=\dim(\operatorname{range}P)\ge n_{\rm blk}-s-j$. The operator $\Pi|_{\operatorname{range}P}$ is the orthogonal projection from $\operatorname{range}P$ onto $\mathsf S_{j-1,i}^{(s)}\cap\operatorname{range}P$, which has dimension $d_1\le 1$.

By Lemma~\ref{lem:block-conditional-uniformity}, the conditional law of $PU_ie_j$ on $\mathcal T_{j,i}^{(s-1)}$ given $\mathcal U_{j-1,i}^{(s-1)}$ is invariant under every orthogonal transformation of $\operatorname{range}P$. Letting $V\sim\operatorname{Unif}(\mathbb S(\operatorname{range}P))$ and choosing a $\mathcal U_{j-1,i}^{(s-1)}$-measurable orthonormal basis $(e'_1,\ldots,e'_{d_0})$ of $\operatorname{range}P$ whose first $d_1$ vectors span $\Pi(\operatorname{range}P)$, we have $\|\Pi PU_i e_j\|_2^2\leq \sum_{\ell=1}^{d_1}\langle e'_\ell,V\rangle^2$ since $\|PU_ie_j\|_2\leq 1$. Thus, we have
\[
    \mathbb P\bigl((\mathcal G_{j,i}^{(s)})^c \cap \mathcal T_{j,i}^{(s-1)}\mid\mathcal U_{j-1,i}^{(s-1)}\bigr)
    \le \mathbb P\!\bigl(\textstyle\sum_{\ell=1}^{d_1}v_\ell^2\ge \tfrac{1}{16R^2N_\delta}\bigr)
    \le \mathbb P\!\bigl(v_1^2\ge\tfrac{1}{16R^2N_\delta}\bigr)\le 2e^{-d_0/(64R^2N_\delta)},
\]
where $v\sim\operatorname{Unif}(\mathbb S^{d_0-1})$, the second inequality uses $d_1\le 1$, and the third is the spherical tail bound $\mathbb P(v_1^2\ge\alpha)\le 2e^{-\alpha d_0/4}$ \cite[Lecture 8]{Ball-1997-Elementary}. This completes the proof.
\end{proof}

We next connect leakage control to the zero-chain progress recursion.

\begin{lemma}
\label{lem:block-chain-oracle}
For any $p\in(1,2]$, $\bar g_T$ in \eqref{eq:unscaled-prob-oracle} satisfies $\mathbb E_{I,Z}\bar g_T(W,I,Z)=\nabla H_T(W)$ and, with $\varsigma:=3G_{\rm ch}$, the moment bound $\mathbb E_{I,Z}\bigl(\sum_{r=1}^m\|[\bar g_T(W,I,Z)-\nabla H_T(W)]_{r,:}\|_2\bigr)^p\le\varsigma^p q^{1-p}$. Moreover, on any event with $\operatorname{prog}_{1/4}(W_{i,:}^{(t)})\le\Gamma_{t-1,i}$ for all $i$, we have $\sum_{i}(\Gamma_{t,i}-\Gamma_{t-1,i})\le Z^{(t)}$.
\end{lemma}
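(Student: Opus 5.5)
Write $k_i:=\operatorname{prog}_{1/4}(W_{i,:})$ and $\nabla_i:=\nabla\phi_T(W_{i,:})$. The proof rests on one observation: the oracle \eqref{eq:unscaled-prob-oracle} is the exact gradient $\tfrac1m\nabla_i$ on the coordinates $j\le k_i$ of row $i$. On the tail coordinates $j>k_i$ it returns the importance-weighted quantity $\tfrac1m\nabla_i\cdot m\mathbf 1\{I=i\}Z/q = \nabla_i\,\mathbf 1\{I=i\}Z/q$. I will prove the three claims in order: unbiasedness, the moment bound, and the progress bound.

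\emph{Unbiasedness.} This follows by taking expectations entrywise. Since $I$ and $Z$ are independent,
\[
\mathbb E\bigl[m\mathbf 1\{I=i\}Z/q\bigr]=m\cdot\tfrac1m\cdot q/q=1 .
\]
Hence the bracket in \eqref{eq:unscaled-prob-oracle} has mean $1$ in every entry. The expectation of the oracle is therefore $\tfrac1m\nabla_i$ in row $i$, which is exactly $\nabla H_T(W)$ by \eqref{eq:unscaled-tensor-chain}.

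\emph{Moment bound.} The error is supported only on the tail coordinates. Let $v_i:=\nabla_i\odot\mathbf 1_{>k_i}$. Lemma~\ref{lem:base-chain}(3) gives $\|v_i\|_2\le G_{\rm ch}$. The row-$r$ error equals $\tfrac1m v_r(m\mathbf 1\{I=r\}Z/q-1)$, so
\[
\sum_r\|\text{err}_r\|_2\le \tfrac{Z}{q}\|v_I\|_2+\tfrac1m\sum_r\|v_r\|_2\le G_{\rm ch}\bigl(Z/q+1\bigr).
\]
Next use $(a+b)^p\le 2^{p-1}(a^p+b^p)$ together with
\[
\mathbb E (Z/q)^p = q^{1-p}.
\]
This gives the bound $2^{p-1}G_{\rm ch}^p(q^{1-p}+1)\le 2^pG_{\rm ch}^pq^{1-p}$, where the last step uses $q\le1$. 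Finally $2\le3$, so the bound is at most $\varsigma^pq^{1-p}$ with $\varsigma=3G_{\rm ch}$. This step is routine.

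\emph{Progress bound.} This is the only step that needs care. Work on the stated event $k_i^{(t)}\le\Gamma_{t-1,i}$. By Lemma~\ref{lem:base-chain}(4), $\nabla_i$ is supported on $[\operatorname{prog}_{1/2}+1]\subseteq[k_i+1]$, using $\operatorname{prog}_{1/2}\le\operatorname{prog}_{1/4}$. So the row-$i$ response at round $t$ can be nonzero only at coordinates $j\le k_i+1$. Each coordinate $j\le k_i$ satisfies $j\le\Gamma_{t-1,i}$ and so reveals nothing new. The coordinate $j=k_i+1$ lies in the tail, where the response is multiplied by $\mathbf 1\{I=i\}Z/q$. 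It is therefore zero unless $I^{(t)}=i$ and $Z^{(t)}=1$.

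Consequently $\Gamma_{t,i}\le\max(\Gamma_{t-1,i},k_i+1)\le\Gamma_{t-1,i}+1$. Moreover, an increase can only happen when $i=I^{(t)}$ and $Z^{(t)}=1$. Only one row can satisfy this, so summing over $i$ gives $\sum_i(\Gamma_{t,i}-\Gamma_{t-1,i})\le Z^{(t)}$. The main obstacle is purely bookkeeping: the support in Lemma~\ref{lem:base-chain}(4) is stated through $\operatorname{prog}_{1/2}$ while the oracle gates through $\operatorname{prog}_{1/4}$, and the monotonicity $\operatorname{prog}_{1/2}\le\operatorname{prog}_{1/4}$ reconciles the two.
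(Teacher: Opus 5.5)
Your proof is correct and follows the paper's argument. Unbiasedness comes from $\mathbb E[m\mathbf 1\{I=i\}Z/q]=1$. The moment bound applies Lemma~\ref{lem:base-chain}(3) to the tail part; you use the pathwise bound $G_{\rm ch}(Z/q+1)$ with $(a+b)^p\le 2^{p-1}(a^p+b^p)$ rather than the paper's case split on $Z$, which gives the slightly better constant $2G_{\rm ch}$. The progress bound comes from Lemma~\ref{lem:base-chain}(4) together with the $\mathbf 1\{I=i\}Z$ gating of tail coordinates, and your explicit use of $\operatorname{prog}_{1/2}\le\operatorname{prog}_{1/4}$ spells out a step the paper leaves implicit.
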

\begin{proof}
Unbiasedness follows from $\mathbb E[m\mathbf 1\{I=i\}Z/q]=1$. For the moment bound, we set $a_i:=m^{-1}\nabla\phi_T(W_{i,:})\odot\mathbf 1_{>\operatorname{prog}_{1/4}(W_{i,:})}$, so $\|a_i\|_2\le G_{\rm ch}/m$ by Lemma~\ref{lem:base-chain}(3), and $\bar g_T-\nabla H_T$ has $i$-th row $a_i(m\mathbf 1\{I=i\}Z/q-1)$. On $\{Z=0\}$ the row-norm sum is at most $G_{\rm ch}$; on $\{Z=1,I=i\}$ it is at most $G_{\rm ch}+G_{\rm ch}/q\le 2G_{\rm ch}/q$. The $p$-th moment is $\le G_{\rm ch}^p+q(2G_{\rm ch}/q)^p\le(3G_{\rm ch})^p q^{1-p}$. For the progress recursion, Lemma~\ref{lem:base-chain}(4) gives $\operatorname{supp}\nabla\phi_T(W_{i,:}^{(t)})\subseteq[\Gamma_{t-1,i}+1]$ on the event, and coordinates beyond $\operatorname{prog}_{1/4}(W_{i,:}^{(t)})$ are nonzero only when $I^{(t)}=i$, $Z^{(t)}=1$, so $\Gamma_{t,i}-\Gamma_{t-1,i}\le\mathbf 1\{I^{(t)}=i\}Z^{(t)}$; summing gives the claim.
\end{proof}

\emph{Proof of Lemma~\ref{lem:random-rotation-bounded}.}
Condition on $\theta$. Define $V_{t,i}:=\{\operatorname{prog}_{1/4}(U_i^\top X_{i,C_i}^{(t)})\le\Gamma_{t-1,i}\}$ and $\mathcal A_i:=\bigcap_{s\le N_\delta,j\le T}(\mathcal G_{j,i}^{(s)}\cup\{j\le\Gamma_{s-1,i}\})$. We claim $\mathcal A_i\subseteq\bigcap_{s\le N_\delta}V_{s,i}$. If $j>\Gamma_{s-1,i}$ on $\mathcal A_i$, then $\mathcal G_{j,i}^{(a)}$ holds for all $a\le s$, and  Lemma~\ref{lem:abstract-projection} gives $|\langle U_ie_j,X_{i,C_i}^{(s)}\rangle|<1/4$, so $j$ does not contribute to $\operatorname{prog}_{1/4}(U_i^\top X_{i,C_i}^{(s)})$. A union bound with Lemma~\ref{lem:one-step-leakage} gives $\mathbb P(\mathcal A_i^c)\le 2N_\delta T e^{-(n_{\rm blk}-N_\delta-T)/(64R^2N_\delta)}$, and a further union over $i\in[m]$ yields
\begin{equation}
\label{eq:rotation-failure-bound}
    \mathbb P\bigl(\bigl[\textstyle\bigcap_{t,i}V_{t,i}\bigr]^c\bigr)\le 2mN_\delta T e^{-(n_{\rm blk}-N_\delta-T)/(64R^2N_\delta)}\le\delta/2,
\end{equation}
provided $C_{\rm rot}$ is taken large enough.

Let $\Gamma_t:=\sum_i\Gamma_{t,i}$. On $\bigcap_{s\le t,i}V_{s,i}$, Lemma~\ref{lem:block-chain-oracle} gives $\Gamma_t-\Gamma_{t-1}\le Z^{(t)}$, so by the Chernoff bound $\mathbb P(S\ge a)\le e^{2\mathbb E S-a}$ for a Bernoulli sum $S$ with $a\ge 2\mathbb E S$, taking $a=mT/2$ and $\mathbb E S=qN_\delta\le(mT-2\log(2/\delta))/4$ (by \eqref{eq:N-delta}), we have
\begin{equation}
\label{eq:progress-chernoff}
    \mathbb P\bigl(\Gamma_{N_\delta}\ge mT/2,\,\textstyle\bigcap_{t,i}V_{t,i}\bigr)\le e^{2qN_\delta-mT/2}\le\delta/2.
\end{equation}
Combining \eqref{eq:rotation-failure-bound} and \eqref{eq:progress-chernoff}, with probability at least $1-\delta$, $V_{t,i}$ holds for all $t\le N_\delta$, $i\in[m]$, and $\Gamma_t<mT/2$. Since fewer than $m/2$ rows can have $\Gamma_{t-1,i}=T$, at least $m/2$ rows have $\Gamma_{t-1,i}<T$, and for each such $i$, $V_{t,i}$ gives $\operatorname{prog}_{1/4}(U_i^\top X_{i,C_i}^{(t)})<T$. The bound is uniform in $\theta$, hence unconditional. \hfill$\square$

\subsection{Proof of Lemma~\ref{lem:soft-projected-hardness}}
\label{sec:proof-compressed}

The reduction to the bounded case uses the soft projection together with two facts: a norm identity on $\mathcal S^*$ and a gradient lower bound for one soft-projected block.

\begin{lemma}
\label{lem:norm-identity}
For every $Y\in\mathcal S^*$, the nonzero rows of $Y$ are pairwise orthogonal, and $\|Y\|_{\rm nuc}=\sum_{i=1}^m\|Y_{i,:}\|_2$. Moreover, for any $X\in\mathbb R^{m\times n}$, $\max_i\|X_{i,C_i}\|_2\le\|X\|_{\rm op}$.
\end{lemma}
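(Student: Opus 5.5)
The plan is to exploit that the column blocks $C_1,\ldots,C_m$ are pairwise disjoint. For the orthogonality claim, take $Y\in\mathcal S^*$. Row $i$ of $Y$ is supported on the columns $C_i$, so for $i\neq i'$ the rows $Y_{i,:}$ and $Y_{i',:}$ have disjoint supports and hence $\langle Y_{i,:},Y_{i',:}\rangle=0$. This means $YY^\top$ is diagonal, with entries $\|Y_{i,:}\|_2^2$.

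For the nuclear-norm identity, I would use $\|Y\|_{\rm nuc}=\operatorname{tr}\bigl((YY^\top)^{1/2}\bigr)$. Since $YY^\top=\operatorname{diag}(\|Y_{1,:}\|_2^2,\ldots,\|Y_{m,:}\|_2^2)$, its square root is $\operatorname{diag}(\|Y_{i,:}\|_2)$, and taking the trace gives $\sum_i\|Y_{i,:}\|_2$.

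An alternative argument, which I would mention as a check, writes $Y=DQ$. Here $D=\operatorname{diag}(\|Y_{i,:}\|_2)$, and $Q$ has rows $Y_{i,:}/\|Y_{i,:}\|_2$ when the row is nonzero and $0$ otherwise. The nonzero rows of $Q$ are orthonormal, so this factorization is essentially an SVD and the singular values are exactly the row norms.

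For the final claim, fix $i$ and let $v\in\mathbb R^n$ be the vector $X_{i,C_i}^\top/\|X_{i,C_i}\|_2$ placed on the coordinates $C_i$ and zero elsewhere. If $X_{i,C_i}=0$ the inequality is trivial. Otherwise $\|v\|_2=1$, and
\[
\|X\|_{\rm op}\ge \|Xv\|_2\ge |(Xv)_i|=\|X_{i,C_i}\|_2 .
\]
Equivalently, one can bound $\|X\|_{\rm op}\ge\|e_i^\top X\|_2\ge\|X_{i,C_i}\|_2$, because restricting a row to a subset of its coordinates cannot increase its Euclidean norm.

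I expect no real obstacle in this argument. The only point needing care is the zero rows: they contribute zero singular values, which is consistent with the sum formula.
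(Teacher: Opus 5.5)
Your proposal is correct and follows essentially the same route as the paper: disjoint column supports make $YY^\top$ diagonal with entries $\|Y_{i,:}\|_2^2$, so the singular values are the row norms, and the operator bound comes from $\|X_{i,C_i}\|_2\le\|X_{i,:}\|_2=\|X^\top e_i\|_2\le\|X\|_{\rm op}$. Your test-vector version of the last step is an equivalent way of writing the same inequality.
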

\begin{proof}
Nonzero rows of $Y\in\mathcal S^*$ have disjoint column supports by construction, hence are pairwise orthogonal. Then $YY^\top$ is diagonal with $(YY^\top)_{ii}=\|Y_{i,:}\|_2^2$, so the nonzero singular values of $Y$ are $\{\|Y_{i,:}\|_2:Y_{i,:}\ne 0\}$, giving the nuclear-norm identity. The operator bound follows from $\|X_{i,C_i}\|_2\le\|X_{i,:}\|_2=\|X^\top e_i\|_2\le\|X\|_{\rm op}$.
\end{proof}

\begin{lemma}
\label{lem:soft-calculus}
$\|\rho_R(z)\|_2\le R$, $\|J_R(z)\|_{\rm op}\le 1$, $\|\rho_R(z)-\rho_R(z')\|_2\le\|z-z'\|_2$, and $\|J_R(z)-J_R(z')\|_{\rm op}\le(6/R)\|z-z'\|_2$.
\end{lemma}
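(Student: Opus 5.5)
The plan is to work directly from the closed form of $\rho_R$. Write $s(z):=(1+\|z\|_2^2/R^2)^{-1/2}\in(0,1]$, so that $\rho_R(z)=s(z)z$. The first claim is then immediate: $\|\rho_R(z)\|_2=\|z\|_2/\sqrt{1+\|z\|_2^2/R^2}\le R$, because $\|z\|_2^2\le R^2+\|z\|_2^2$. Next I would compute the Jacobian. Since $\nabla s(z)=-s(z)^3z/R^2$, we get
\[
J_R(z)=s(z)I-\tfrac{s(z)^3}{R^2}zz^\top .
\]
This matrix is symmetric. On $z^\perp$ it has eigenvalue $s$, and in the direction $z$ it has eigenvalue $s-s^3\|z\|_2^2/R^2=s^3$, using $1-s^2\|z\|_2^2/R^2=s^2$. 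Both eigenvalues lie in $(0,1]$, which gives $\|J_R(z)\|_{\rm op}\le1$. The Lipschitz bound on $\rho_R$ then follows from the mean value inequality along the segment joining $z$ and $z'$, since $\rho_R(z)-\rho_R(z')=\int_0^1J_R(z'+t(z-z'))(z-z')\,dt$.

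The remaining step, and the only one with real content, is the Lipschitz bound on $J_R$. I would again use the integral form and bound the operator norm of the derivative of $J_R$ along an arbitrary direction $h$ with $\|h\|_2=1$. Differentiating gives
\[
DJ_R(z)[h]=\langle\nabla s,h\rangle I-\tfrac{3s^2\langle\nabla s,h\rangle}{R^2}zz^\top-\tfrac{s^3}{R^2}(hz^\top+zh^\top).
\]
Substituting $|\langle\nabla s,h\rangle|\le s^3\|z\|_2/R^2$, the three terms have operator norms at most $s^3\|z\|_2/R^2$, $3s^5\|z\|_2^3/R^4$, and $2s^3\|z\|_2/R^2$, respectively.

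To turn these into a bound of order $1/R$, set $u:=\|z\|_2/R$, so that $s=(1+u^2)^{-1/2}$. Then $s^3\|z\|_2/R^2=u(1+u^2)^{-3/2}/R\le 1/R$ and $s^5\|z\|_2^3/R^4=u^3(1+u^2)^{-5/2}/R\le1/R$. Adding the three contributions gives $\|DJ_R(z)[h]\|_{\rm op}\le(1+3+2)/R=6/R$. Integrating this bound along the segment from $z'$ to $z$ yields $\|J_R(z)-J_R(z')\|_{\rm op}\le(6/R)\|z-z'\|_2$.

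The main obstacle is bookkeeping: differentiating the rank-one term correctly, and confirming that each scalar factor $u^k(1+u^2)^{-(k+2)/2}$ stays bounded by $1$ so that the constant comes out to exactly $6$. A crude triangle-inequality bound suffices here; no sharper spectral argument is needed.
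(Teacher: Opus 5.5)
Your proof is correct and essentially the same as the paper's. You use the same closed-form Jacobian $sI-s^3zz^\top/R^2$ with eigenvalues $s$ and $s^3$, the same mean-value argument for $\rho_R$, and the same three-term directional derivative of $J_R$, bounded by $3s^3\|z\|_2/R^2+3s^5\|z\|_2^3/R^4\le 6/R$. The only cosmetic difference is that the paper first rescales to $R=1$, whereas you keep $R$ general and substitute $u=\|z\|_2/R$, which amounts to the same computation.
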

\begin{proof}
Let $s(z):=\sqrt{1+\|z\|_2^2/R^2}$; then $\|\rho_R(z)\|_2=\|z\|_2/s(z)\le R$. The Jacobian $J_R(z)=s^{-1}I-s^{-3}zz^\top/R^2$ has eigenvalues $s^{-1}$ (on $z^\perp$) and $s^{-3}$ (on $\operatorname{span}(z)$), so $\|J_R\|_{\rm op}\le 1$, and the Lipschitz bound for $\rho_R$ follows by the mean-value theorem. For the Jacobian Lipschitz bound it suffices to set $R=1$. Writing $a(x):=(1+\|x\|_2^2)^{-1/2}$, we have $J_1(x)=aI-a^3xx^\top$, and a direct computation yields $\nabla J_1(x)[h]=-a^3\langle x,h\rangle I+3a^5\langle x,h\rangle xx^\top-a^3(hx^\top+xh^\top)$. For unit $h$ and $r=\|x\|_2$, $\|\nabla J_1(x)[h]\|_{\rm op}\le 3a^3r+3a^5r^3\le 6$ since $a^3r,a^5r^3\le 1$. Rescaling completes the proof.
\end{proof}

\begin{lemma}
\label{lem:single-block-soft}
There exist universal constants $C_\rho,\ell_1\ge1$ and $\eta,\kappa_0\in(0,1)$ such that, for every $T\ge1$, $n_{\rm blk}\ge T$, $U\in\mathrm{St}(n_{\rm blk},T)$, and $R=C_\rho\sqrt T$, the function $h_{T,U}(z)=\phi_T(U^\top\rho_R(z))+\tfrac{\eta}{2}\|z\|_2^2$ is $\ell_1$-smooth, and $\|\nabla h_{T,U}(z)\|_2\ge\kappa_0$ whenever $\operatorname{prog}_1(U^\top\rho_R(z))<T$.
\end{lemma}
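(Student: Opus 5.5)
This is the matrix-free, single-block analogue of the soft-projection lemma in \cite[Lemma 16]{Arjevani-2023-Lower}. The plan is to prove the two claims separately, smoothness first and then the gradient lower bound. Throughout, set $y:=U^\top\rho_R(z)\in\mathbb R^T$; since $U^\top U=I_T$ we have $\|U\|_{\rm op}=1$.

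\textbf{Smoothness.} By the chain rule,
\[
\nabla h_{T,U}(z)=J_R(z)^\top U\nabla\phi_T(y)+\eta z,
\]
and $J_R$ is symmetric. For two points $z,z'$, split the difference as
\[
J_R(z)U[\nabla\phi_T(y)-\nabla\phi_T(y')]+[J_R(z)-J_R(z')]U\nabla\phi_T(y')+\eta(z-z').
\]
The three terms are bounded as follows.
\begin{itemize}
\item The first term is at most $\ell_{\rm ch}\|y-y'\|_2\le\ell_{\rm ch}\|z-z'\|_2$, using Lemma~\ref{lem:base-chain}(2) and the $1$-Lipschitzness of $\rho_R$ from Lemma~\ref{lem:soft-calculus}.
\item The second term is at most $(6/R)\|\nabla\phi_T(y')\|_2\|z-z'\|_2$. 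Lemma~\ref{lem:base-chain}(2) gives the $\ell_\infty$ bound $G_{\rm ch}$, so $\|\nabla\phi_T\|_2\le G_{\rm ch}\sqrt T$. With $R=C_\rho\sqrt T$ this term is at most $6G_{\rm ch}/C_\rho\cdot\|z-z'\|_2$, which is dimension-free.
\end{itemize}
Hence $\ell_1:=\ell_{\rm ch}+6G_{\rm ch}/C_\rho+\eta$ works. Choosing $C_\rho\ge 1$ and $\eta<1$ keeps $\ell_1\ge1$.

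\textbf{Gradient lower bound.} Let $j=\operatorname{prog}_1(y)+1\le T$. By Lemma~\ref{lem:base-chain}(5), $|\nabla_j\phi_T(y)|>1$. Set $g:=\nabla\phi_T(y)$ and $s:=\sqrt{1+\|z\|_2^2/R^2}$. Use $J_R(z)=s^{-1}I-s^{-3}zz^\top/R^2$ and argue in two cases on $\|z\|_2$.
\begin{itemize}
\item \emph{Small $\|z\|_2$, say $\|z\|_2\le R/2$.} Then $s\le\sqrt{5}/2$. Write $\rho:=\rho_R(z)=z/s$ and test the gradient against $Ue_j$. Because $\langle z,Ue_j\rangle=s\,y_j$ and $|y_j|\le1$ (as $j>\operatorname{prog}_1(y)$), we get
\[
\langle Ue_j,J_RUg\rangle=s^{-1}g_j-s^{-3}R^{-2}\langle z,Ug\rangle\, s\,y_j .
\]
The correction is at most $s^{-2}R^{-2}\|z\|_2\|g\|_2\le (R/2)G_{\rm ch}\sqrt T/R^2=G_{\rm ch}/(2C_\rho)$. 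Taking $C_\rho$ large (e.g.\ $C_\rho\ge 10G_{\rm ch}$) leaves $|\langle Ue_j,J_RUg\rangle|\ge 2/\sqrt5-0.05$. Then choose $\eta$ small so that $\eta|\langle Ue_j,z\rangle|=\eta s|y_j|\le\eta\sqrt5/2$ does not cancel this. This gives $\|\nabla h\|_2\ge\kappa_0$ for some $\kappa_0\approx 0.5$.
\item \emph{Large $\|z\|_2> R/2$.} Test against $z/\|z\|_2$. Along $z$, $J_R$ acts as $s^{-3}$, so
\[
|\langle z/\|z\|_2,J_RUg\rangle|\le s^{-3}\|g\|_2\le s^{-3}G_{\rm ch}\sqrt T.
\]
Meanwhile the regularizer contributes $\eta\|z\|_2$. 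One needs $\eta\|z\|_2\gtrsim s^{-3}G_{\rm ch}\sqrt T+\kappa_0$ for all $\|z\|_2\ge R/2$. Since $s^{-3}\|z\|_2$-type terms decay like $R^3/\|z\|_2^3$, the worst case is near $\|z\|_2= R/2$. There the requirement reads $\eta C_\rho\sqrt T/2\ge (4/5)^{3/2}G_{\rm ch}\sqrt T+\kappa_0$.
\end{itemize}

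\textbf{Main obstacle.} The constants must be made compatible: $C_\rho$ has to be large relative to $G_{\rm ch}$ for the small case, while $\eta$ has to be small for that same case and yet satisfy $\eta C_\rho\gtrsim G_{\rm ch}$ for the large case. The plan is to take $\eta$ a fixed small constant (say $1/10$) and then $C_\rho\ge 40G_{\rm ch}/\eta$, which satisfies both requirements. Since $T\ge1$, the $\kappa_0$ term is absorbed, giving universal $C_\rho,\ell_1,\eta,\kappa_0$. If the boundary case proves delicate, a fallback is to choose the case threshold as $\|z\|_2\le R/2$ versus larger and use $\eta\|z\|_2-s^{-3}\|g\|_2$ monotonicity in $\|z\|_2$, exactly as in \cite[Lemma 16]{Arjevani-2023-Lower}.
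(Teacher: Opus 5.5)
Your proposal is correct and follows essentially the same route as the paper: the same three-term splitting gives $\ell_1=\ell_{\rm ch}+6G_{\rm ch}/C_\rho+\eta$, and the same two-case argument at threshold $\|z\|_2=R/2$ tests against $Ue_j$ in the small case. The only difference is in the large case, where you project onto $z/\|z\|_2$ to use the $s^{-3}$ eigenvalue of $J_R(z)$, whereas the paper uses $\|J_R(z)\|_{\rm op}\le 1$ to get $\|\nabla h_{T,U}(z)\|_2\ge \eta R/2-G_{\rm ch}\sqrt T$. Both versions work with compatible constants such as yours ($\eta=1/10$, $C_\rho\ge 40G_{\rm ch}/\eta$).
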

\begin{proof}
For $h^0_{T,U}(z):=\phi_T(U^\top\rho_R(z))$, the chain rule gives $\nabla h^0_{T,U}(z)=J_R(z)^\top U\nabla\phi_T(U^\top\rho_R(z))$. Using $\|J_R\|_{\rm op}\le 1$, $U^\top U=I$, Lemma~\ref{lem:base-chain}, and Lemma~\ref{lem:soft-calculus},
\[
    \|\nabla h^0_{T,U}(z)-\nabla h^0_{T,U}(z')\|_2\le \ell_{\rm ch}\|z-z'\|_2+ (6/R)\|z-z'\|_2\cdot G_{\rm ch}\sqrt T\le(\ell_{\rm ch}+6G_{\rm ch}/C_\rho)\|z-z'\|_2.
\]
The quadratic adds $\eta$ in the smoothness parameter, so $\ell_1:=\ell_{\rm ch}+6G_{\rm ch}/C_\rho+\eta$ works.

Choose $\eta\in(0,1)$ with $\eta\sqrt 5/2\le 1/16$, and $C_\rho\ge 1$ with $G_{\rm ch}/(2C_\rho)\le 1/16$ and $\eta C_\rho/2-G_{\rm ch}\ge 1$. Let $y:=\rho_R(z)$, $v:=\nabla\phi_T(U^\top y)$, $s:=\sqrt{1+\|z\|_2^2/R^2}$ (so $z=sy$), $j:=\operatorname{prog}_1(U^\top y)+1\in[T]$, and $u:=Ue_j$. Lemma~\ref{lem:base-chain}(5) gives $|\langle u,Uv\rangle|=|v_j|\ge 1$ and $|\langle u,y\rangle|=|(U^\top y)_j|\le 1$. With $\nabla h_{T,U}(z)=J_R(z)^\top Uv+\eta z$, we consider two cases.

If $\|z\|_2\le R/2$, then $s^{-1}\ge 2/\sqrt 5$ and $\|y\|_2\le R/2$, giving
\[
    |\langle u,\nabla h_{T,U}(z)\rangle|\ge s^{-1}|v_j|-s^{-1}\|y\|_2\|v\|_2/R^2-\eta s|\langle u,y\rangle|
    \ge \tfrac{2}{\sqrt 5}-\tfrac{G_{\rm ch}}{2C_\rho}-\tfrac{\eta\sqrt 5}{2}\ge \tfrac{2}{\sqrt 5}-\tfrac{1}{8}.
\]
If $\|z\|_2>R/2$, then $\|\nabla h_{T,U}(z)\|_2\ge \eta\|z\|_2-\|J_R^\top Uv\|_2>\eta R/2-G_{\rm ch}\sqrt T=(\eta C_\rho/2-G_{\rm ch})\sqrt T\ge 1$. Thus $\kappa_0:=\min\{1,2/\sqrt 5-1/8\}$ suffices.
\end{proof}

\emph{Proof of Lemma~\ref{lem:soft-projected-hardness}.}
Condition on the seed of $\mathsf A$. Define a deterministic simulator $\mathsf B$ for the bounded rotated oracle: when $\mathsf A$ proposes $X^{(t)}$, $\mathsf B$ submits $Y^{(t)}:=\rho_R^{\rm blk}(X^{(t)})$, with $\|Y_{i,C_i}^{(t)}\|_2\le R$ for each $i,t$. Given the returned $(\widetilde H_{T,U}(Y^{(t)}),\widetilde g_{T,U}(Y^{(t)},\xi^{(t)}))$, $\mathsf B$ reconstructs
\begin{align*}
    \widehat F_{T,U}(X^{(t)}) &= \widetilde H_{T,U}(Y^{(t)})+\tfrac{\eta}{2m}\textstyle\sum_i\|X_{i,C_i}^{(t)}\|_2^2,\\
    [\widehat g_{T,U}(X^{(t)},\xi^{(t)})]_{i,C_i}&= J_R(X_{i,C_i}^{(t)})^\top[\widetilde g_{T,U}(Y^{(t)},\xi^{(t)})]_{i,C_i}+\tfrac{\eta}{m}X_{i,C_i}^{(t)}
\end{align*}
using $\Phi_U(\rho_R^{\rm blk}(X))=\Phi_U(Y)$. By induction on $t$, the simulated trajectory of $\mathsf A$ is distributed exactly as that of $\mathsf A$ on $\widehat{\mathsf O}$, with $Y^{(t)}=\rho_R^{\rm blk}(X_{\mathsf A[\widehat{\mathsf O}]}^{(t)})$.

Since $\mathsf B$ is $R$-bounded with $R=C_\rho\sqrt T$, dimension assumption \eqref{eq:soft-hard-dimension} matches that of Lemma~\ref{lem:random-rotation-bounded}, giving an event of probability $\ge 1-\delta$ on which $|\mathcal I_t|\ge m/2$ for every $t\le N_\delta$, where $\mathcal I_t:=\{i:\operatorname{prog}_{1/4}(U_i^\top\rho_R(X_{i,C_i}^{(t)}))<T\}$. For each $i\in\mathcal I_t$, Lemma~\ref{lem:single-block-soft} gives $\|\nabla h_{T,U_i}(X_{i,C_i}^{(t)})\|_2\ge\kappa_0$. Since $\nabla\widehat F_{T,U}(X^{(t)})\in\mathcal S^*$ has $i$-th row $m^{-1}\nabla h_{T,U_i}(X_{i,C_i}^{(t)})$, Lemma~\ref{lem:norm-identity} yields
\[
    \|\nabla\widehat F_{T,U}(X^{(t)})\|_{\rm nuc}=\textstyle\sum_i\|[\nabla\widehat F_{T,U}(X^{(t)})]_{i,:}\|_2\ge m^{-1}\sum_{i\in\mathcal I_t}\|\nabla h_{T,U_i}(X_{i,C_i}^{(t)})\|_2\ge(m/2)\kappa_0/m=:\kappa.
\]
Removing the conditioning on the algorithm seed yields \eqref{eq:soft-hard-nuclear}. \hfill$\square$

\subsection{Proof of Proposition~\ref{prop:compressed-admissibility}}
\label{sec:proof-admissibility}

We verify the four items in order.

Since $\rho_R(0)=0$, $\widehat F_{T,U}(0)=\phi_T(0)$, and $\widehat F_{T,U}(X)\ge\inf_u\phi_T(u)$. By Lemma~\ref{lem:base-chain}(1), $\widehat F_{T,U}(0)-\inf_X\widehat F_{T,U}(X)\le\Delta_{\rm ch}T$, so $\Delta_0:=\Delta_{\rm ch}$ suffices.

Since $\widehat F_{T,U}=m^{-1}\sum_i h_{T,U_i}(X_{i,C_i})$ and $\nabla\widehat F_{T,U}\in\mathcal S^*$, Lemma~\ref{lem:norm-identity} and Lemma~\ref{lem:single-block-soft} give
\begin{align*}
    \|\nabla\widehat F_{T,U}(X)-\nabla\widehat F_{T,U}(Y)\|_{\rm nuc}=m^{-1}\textstyle\sum_i\|\nabla h_{T,U_i}(X_{i,C_i})-\nabla h_{T,U_i}(Y_{i,C_i})\|_2
    &\le m^{-1}\ell_1\sum_i\|X_{i,C_i}-Y_{i,C_i}\|_2 \\
    &\le\ell_1\|X-Y\|_{\rm op}.    
\end{align*}

Let $W:=\Phi_U(\rho_R^{\rm blk}(X))$. By Lemma~\ref{lem:block-chain-oracle}, $\mathbb E_{I,Z}\bar g_T(W,I,Z)=\nabla H_T(W)$; the chain rule on $h_{T,U_i}$ then gives $\mathbb E_{I,Z}[\widehat g_{T,U}(X,I,Z)]_{i,C_i}=J_R(X_{i,C_i})^\top U_i[\nabla H_T(W)]_{i,:}+(\eta/m)X_{i,C_i}=[\nabla\widehat F_{T,U}(X)]_{i,C_i}$. The deterministic quadratic cancels in the noise, so $\widehat g_{T,U}-\nabla\widehat F_{T,U}\in\mathcal S^*$. Lemma~\ref{lem:norm-identity}, $\|J_R\|_{\rm op}\le 1$, and $U_i^\top U_i=I_T$ give
\[
    \|\widehat g_{T,U}-\nabla\widehat F_{T,U}\|_{\rm nuc}=\textstyle\sum_i\|[\widehat g_{T,U}-\nabla\widehat F_{T,U}]_{i,:}\|_2\le\sum_i\|[\bar g_T(W,I,Z)-\nabla H_T(W)]_{i,:}\|_2,
\]
and raising to the $p$-th power and taking expectation yields the moment bound via Lemma~\ref{lem:block-chain-oracle}.

We first show that the single-block map $U\mapsto h_{T,U}$ is injective on $\mathrm{St}(n_{\rm blk},T)$. Suppose $U,V\in\mathrm{St}(n_{\rm blk},T)$ satisfy $h_{T,U}(z)-h_{T,V}(z)=\text{const}$ on $\mathbb R^{n_{\rm blk}}$. Since $\rho_R$ is a diffeomorphism from $\mathbb R^{n_{\rm blk}}$ onto the open ball $B_R:=\{y:\|y\|_2<R\}$ and the quadratic terms in $h_{T,U},h_{T,V}$ coincide, $\phi_T(U^\top y)-\phi_T(V^\top y)=\text{const}$ on $B_R$. Differentiating,
\begin{equation}
\label{eq:hidden-ident-grad}
    U\nabla\phi_T(U^\top y)=V\nabla\phi_T(V^\top y),\quad y\in B_R.
\end{equation}
We recover the columns inductively. At $y=0$, Lemma~\ref{lem:base-chain}(4)--(5) gives $\nabla\phi_T(0)=a_1 e_1$ with $a_1\ne 0$, so \eqref{eq:hidden-ident-grad} yields $Ue_1=Ve_1$. Assume $Ue_\ell=Ve_\ell$ for $\ell<j$, $2\le j\le T$. Choose $a_j>1$ with $a_j\sqrt{j-1}<R$ (possible since $R=C_\rho\sqrt T\ge \sqrt T>\sqrt{j-1}$), and set $u^{(j)}:=a_j\sum_{\ell<j}e_\ell$, $y^{(j)}:=Uu^{(j)}=Vu^{(j)}\in B_R$. Then $U^\top y^{(j)}=V^\top y^{(j)}=u^{(j)}$, $\operatorname{prog}_1(u^{(j)})=j-1<T$, and $\operatorname{prog}_{1/2}(u^{(j)})=j-1$, so Lemma~\ref{lem:base-chain}(4)--(5) gives $\operatorname{supp}\nabla\phi_T(u^{(j)})\subseteq[j]$ and $|[\nabla\phi_T(u^{(j)})]_j|>1$. Applying \eqref{eq:hidden-ident-grad} at $y=y^{(j)}$ and using the induction hypothesis to cancel columns $1,\ldots,j-1$ gives $[\nabla\phi_T(u^{(j)})]_j(Ue_j-Ve_j)=0$, hence $Ue_j=Ve_j$. By induction, $U=V$.

For the multi-block case, suppose $\widehat F_{T,U}=\widehat F_{T,V}$ for $U,V\in\mathrm{St}(n_{\rm blk},T)^m$. Fix a row $i$ and set all blocks other than $i$ to zero: from \eqref{eq:compressed-objective}, $h_{T,U_i}(z)-h_{T,V_i}(z)=\sum_{r\ne i}(h_{T,V_r}(0)-h_{T,U_r}(0))$ is independent of $z$, so the single-block claim gives $U_i=V_i$. Hence $U=V$, proving identifiability.

\subsection{Smooth and nonconvex problems}

\begin{proposition}
\label{prop:generalized-smooth}
Under Assumption~\ref{assumption:smooth}, for any $X,Y\in\br^{m\times n}$ with
$\|Y-X\|\le \frac{1}{L_1}$, we have
\begin{equation*}
F(Y)\le F(X)+\langle \nabla F(X),Y-X\rangle
+\frac{L_0+L_1\|\nabla F(X)\|_\star}{2}\|Y-X\|^2.
\end{equation*}
\end{proposition}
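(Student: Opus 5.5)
The plan is the standard integral form of Taylor's theorem along the segment from $X$ to $Y$, combined with Assumption~\ref{assumption:smooth} at each point of the segment. Set $D:=Y-X$ with $\|D\|\le \frac{1}{L_1}$. For $s\in[0,1]$ let $X_s:=X+sD$, so that $\|X_s-X\|=s\|D\|\le \frac{1}{L_1}$; hence every point on the segment lies in the neighborhood where Assumption~\ref{assumption:smooth} applies with base point $X$. Since $F$ is differentiable and, by Assumption~\ref{assumption:smooth}, $\nabla F$ is Lipschitz (hence continuous) on this segment, the map $s\mapsto F(X_s)$ is continuously differentiable. The fundamental theorem of calculus then gives
\begin{equation*}
F(Y)-F(X)-\langle \nabla F(X),D\rangle=\int_0^1 \langle \nabla F(X_s)-\nabla F(X),D\rangle\,ds .
\end{equation*}

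Next I bound the integrand using the duality pairing $\langle A,B\rangle\le \|A\|_\star\|B\|$, which holds by the definition of the dual norm. Applying Assumption~\ref{assumption:smooth} with the pair $(X,X_s)$ gives
\begin{equation*}
\langle \nabla F(X_s)-\nabla F(X),D\rangle\le (L_0+L_1\|\nabla F(X)\|_\star)\,s\|D\|\cdot\|D\|.
\end{equation*}
Integrating $s$ over $[0,1]$ produces the factor $\frac12$, and the claimed inequality follows. When $L_1=0$, the constraint on $\|D\|$ disappears and the same argument goes through verbatim.

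The only step needing care is that the base point in the smoothness bound must always be $X$, never $X_s$, so that the coefficient stays $L_0+L_1\|\nabla F(X)\|_\star$. The assumption is stated with the gradient norm evaluated at its first argument, and we apply it exactly that way. Apart from this, the argument is routine.
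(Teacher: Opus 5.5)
Your proposal is correct and matches the paper's proof. Both write the remainder as $\int_0^1 \langle \nabla F(X+t(Y-X))-\nabla F(X),Y-X\rangle\,dt$, bound the integrand by the dual-norm pairing, and apply Assumption~\ref{assumption:smooth} with base point $X$ to get the factor $t\|Y-X\|^2$, which integrates to the $\frac12$; your remark that $s\mapsto F(X_s)$ is $C^1$, which justifies the fundamental theorem of calculus, is a small point the paper leaves implicit.
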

\begin{proof}
By the fundamental theorem of calculus, we have
\begin{align*}
F(Y)-F(X)-\langle\nabla F(X),Y-X\rangle
&= \int_0^1 \langle \nabla F(X+t(Y-X))-\nabla F(X),\,Y-X\rangle\,dt \\
&\le \int_0^1 \|\nabla F(X+t(Y-X))-\nabla F(X)\|_\star\,\|Y-X\|\,dt \\
&\le \int_0^1 (L_0+L_1\|\nabla F(X)\|_\star)\,t\,\|Y-X\|^2\,dt \\
&= \frac{L_0+L_1\|\nabla F(X)\|_\star}{2}\|Y-X\|^2.
\end{align*}
This finishes the proof.
\end{proof}

\begin{lemma}
\label{lem:weighted-batched-noise}
Let $p\in(1,2]$ and write $\tau_\star:=\tau(\|\cdot\|_\star,m,n,p)$.
For an integer $t\ge 1$, let $X_0,\dots,X_{t-1}$ be $\br^{m\times n}$-valued random variables, let $\Gamma_0,\dots,\Gamma_{t-1}$ be nonnegative random variables, and, for
each $s=0,\dots,t-1$ and $i\in[B]$, let $\zeta_s^i$ be a $\br^{m\times n}$-valued random variable. Define the pre-batch history
\[
    \mathcal H_s:=\sigma\bigl(X_0,\zeta_0^1,\dots,\zeta_0^B,\ldots,
    X_{s-1},\zeta_{s-1}^1,\dots,\zeta_{s-1}^B,X_s\bigr),
\]
with the convention that $\mathcal H_0=\sigma(X_0)$. Assume that, for each $s$,
$\Gamma_s$ is $\mathcal H_s$-measurable, and the variables
$\zeta_s^1,\dots,\zeta_s^B$ are conditionally mutually independent given
$\mathcal H_s$ and satisfy
\[
    \EE[\zeta_s^i\mid \mathcal H_s]=0,
    \qquad
    \EE[\|\zeta_s^i\|_\star^p\mid \mathcal H_s]\le
    \sigma_0^p+\sigma_1^p\Gamma_s^p
\]
almost surely. Let
$\bar\zeta_s:=\frac1B\sum_{i=1}^B \zeta_s^i$. Then, for any deterministic
weights $a_0,\dots,a_{t-1}\ge 0$, we have
\begin{equation*}
\EE\Bigl\|\sum_{s=0}^{t-1} a_s\bar\zeta_s\Bigr\|_\star
\le
\frac{\tau_\star}{B^{\frac{p-1}{p}}}
\left[
\sigma_0\Bigl(\sum_{s=0}^{t-1} a_s^p\Bigr)^{\frac{1}{p}}
+\sigma_1\sum_{s=0}^{t-1} a_s\,\EE\Gamma_s
\right].
\end{equation*}
\end{lemma}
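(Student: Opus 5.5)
Flatten the double sum into a single martingale difference sequence and apply the definition of $\tau_\star$ in \eqref{def:factor} directly. Order the pairs $(s,i)$ lexicographically, $k=(s,i)\mapsto sB+i$, and set $D_k:=\frac{a_s}{B}\zeta_s^i$. The filtration is $\mathcal F_k:=\mathcal H_s\vee\sigma(\zeta_s^1,\dots,\zeta_s^i)$. Conditional independence of $\zeta_s^1,\dots,\zeta_s^B$ given $\mathcal H_s$, together with $\EE[\zeta_s^i\mid\mathcal H_s]=0$, gives $\EE[\zeta_s^i\mid\mathcal H_s,\zeta_s^1,\dots,\zeta_s^{i-1}]=0$. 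Moreover, $\mathcal F_{(s,B)}\subseteq\mathcal H_{s+1}$. Hence $(D_k)$ is a martingale difference sequence with respect to $(\mathcal F_k)$, and therefore also with respect to its own natural filtration, which is coarser (tower property). Integrability follows from the $p$-th moment bound, provided $\EE\Gamma_s<\infty$; otherwise the right-hand side is infinite and there is nothing to prove. Since $\sum_k D_k=\sum_s a_s\bar\zeta_s$, the definition of $\tau_\star$ gives
\[
\EE\Bigl\|\sum_{s} a_s\bar\zeta_s\Bigr\|_\star\le \tau_\star\,\EE\Bigl(\sum_{s}\sum_{i=1}^B \tfrac{a_s^p}{B^p}\|\zeta_s^i\|_\star^p\Bigr)^{1/p}.
\]

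Next, I bound the right-hand side. By Jensen's inequality (concavity of $x\mapsto x^{1/p}$), it is at most $\tau_\star B^{-1}\bigl(\sum_s\sum_i a_s^p\EE\|\zeta_s^i\|_\star^p\bigr)^{1/p}$. The moment bound then gives at most $\tau_\star B^{-1}\bigl(B\sum_s a_s^p(\sigma_0^p+\sigma_1^p\EE\Gamma_s^p)\bigr)^{1/p}$. Subadditivity of $x^{1/p}$ yields the $\sigma_0$ term with the correct factor $B^{\frac1p-1}=B^{-\frac{p-1}{p}}$.

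\textbf{Main obstacle.} This route produces $\sigma_1(\sum_s a_s^p\EE\Gamma_s^p)^{1/p}$ rather than the stated $\sigma_1\sum_s a_s\EE\Gamma_s$, and $\EE\Gamma_s^p$ can exceed $(\EE\Gamma_s)^p$. To avoid the $p$-th moment of $\Gamma_s$, I would not apply Jensen to the whole sum. Instead, I would first use subadditivity pathwise inside the expectation,
\[
\Bigl(\sum_{s,i} \tfrac{a_s^p}{B^p}\|\zeta_s^i\|_\star^p\Bigr)^{1/p}\le \sum_s \tfrac{a_s}{B}\Bigl(\sum_i\|\zeta_s^i\|_\star^p\Bigr)^{1/p},
\]
applied only to the $\sigma_1$-part. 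To make this precise, split each $\zeta_s^i=\zeta_s^{i,0}+\zeta_s^{i,1}$, with $\zeta_s^{i,1}$ carrying the $\Gamma_s$-dependent scale. A cleaner route is to apply $\tau_\star$ separately to two sequences, which is legitimate since $\EE\|\cdot\|_\star$ is subadditive.

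Concretely, I would normalize: $\zeta_s^i=\frac{\sigma_0+\sigma_1\Gamma_s}{\sigma_0+\sigma_1\Gamma_s}\zeta_s^i$, and write $w_s:=\sigma_0/(\sigma_0+\sigma_1\Gamma_s)$, which is $\mathcal H_s$-measurable. Then $\zeta_s^i=w_s\zeta_s^i+(1-w_s)\zeta_s^i$, and both pieces remain martingale differences because $w_s$ is predictable. The conditional $p$-th moments satisfy
\[
\EE\|w_s\zeta_s^i\|_\star^p\le w_s^p(\sigma_0+\sigma_1\Gamma_s)^p=\sigma_0^p
\quad\text{and}\quad
\EE\|(1-w_s)\zeta_s^i\|_\star^p\le(\sigma_1\Gamma_s)^p,
\]
using $\sigma_0^p+\sigma_1^p\Gamma_s^p\le(\sigma_0+\sigma_1\Gamma_s)^p$.

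For the first piece, the Jensen argument above gives exactly $\tau_\star\sigma_0 B^{-\frac{p-1}{p}}(\sum_s a_s^p)^{1/p}$. For the second piece, I would use pathwise subadditivity over $s$ and then conditional Jensen within each block $s$:
\[
\EE\Bigl(\sum_i\|(1-w_s)\zeta_s^i\|_\star^p\Bigr)^{1/p}\le \EE\Bigl[\bigl(B\sigma_1^p\Gamma_s^p\bigr)^{1/p}\Bigr]=B^{1/p}\sigma_1\EE\Gamma_s.
\]
This gives $\tau_\star B^{-\frac{p-1}{p}}\sigma_1\sum_s a_s\EE\Gamma_s$. The delicate point is checking that conditioning on $\mathcal H_s$ inside the expectation of a $1/p$-power is legitimate. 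I expect it to work via the tower property and the $\mathcal H_s$-measurability of $\Gamma_s$.
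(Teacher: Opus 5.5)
Your argument is correct and gives exactly the stated bound with the same constants, but it takes a different route from the paper.

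\textbf{The paper's route.} It applies the definition of $\tau_\star$ once, to the full flattened sequence $\frac{a_s}{B}\zeta_s^i$. It then bounds $\EE(\sum_{s,i}a_s^p\|\zeta_s^i\|_\star^p)^{1/p}$ by a backward induction over $k=t-1,\dots,0$. At step $k$ it conditions on $\mathcal H_k$, with the earlier partial sum (plus the $\sigma_0$ constants already accumulated) serving as an $\mathcal H_k$-measurable $U$. Conditional Jensen and $(a+b)^{1/p}\le a^{1/p}+b^{1/p}$ then peel off $B^{1/p}a_k\sigma_1\Gamma_k$, while $Ba_k^p\sigma_0^p$ stays inside the $1/p$-power.

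\textbf{Your route.} You split the noise itself with the $\mathcal H_s$-measurable weight $w_s$, use the triangle inequality for $\EE\|\cdot\|_\star$, and invoke $\tau_\star$ twice. This replaces the nested induction with two one-shot estimates:
\begin{itemize}
\item unconditional Jensen for the $\sigma_0$-part;
\item pathwise $\ell_p\le\ell_1$ across blocks, followed by one conditional Jensen per block, for the $\sigma_1$-part.
\end{itemize}
It also makes clear why the $\sigma_0$-term involves the $\ell_p$-norm of the weights while the $\sigma_1$-term involves their $\ell_1$-sum.

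\textbf{What your route costs.} It needs a few small checks that the paper avoids:
\begin{itemize}
\item the inequality $\sigma_0^p+\sigma_1^p\Gamma_s^p\le(\sigma_0+\sigma_1\Gamma_s)^p$;
\item a convention for $w_s$ on $\{\sigma_0+\sigma_1\Gamma_s=0\}$; there $\zeta_s^i=0$ almost surely, so $w_s:=1$ works;
\item the martingale-difference property of the reweighted pieces at $i=1$.
\end{itemize}
On the last point, $w_s$ may depend on $X_s$, so it is not measurable with respect to your $\mathcal F_{(s-1,B)}$ and is not strictly predictable in your filtration. This is harmless: $\mathcal F_{(s-1,B)}\subseteq\mathcal H_s$ and $|w_s|\le 1$, so the tower property gives $\EE[w_s\zeta_s^1\mid\mathcal F_{(s-1,B)}]=\EE[w_s\EE[\zeta_s^1\mid\mathcal H_s]\mid\mathcal F_{(s-1,B)}]=0$. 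Alternatively, use $\mathcal H_{s+1}$ in place of $\mathcal F_{(s,B)}$.

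The step you flagged as delicate, conditioning on $\mathcal H_s$ inside the $1/p$-power, is just conditional Jensen for a nonnegative variable and is fine as written.
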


\begin{proof}
Since
$\sum_{s=0}^{t-1} a_s\bar\zeta_s
=\sum_{s=0}^{t-1}\sum_{i=1}^B \frac{a_s}{B}\zeta_s^i$,
we first check that the lexicographically ordered array
$Z_{s,i}:=\frac{a_s}{B}\zeta_s^i$, $s=0,\dots,t-1$, $i\in[B]$, is a martingale
difference sequence with respect to its natural filtration. Let
$\mathcal N_{s,i}:=\sigma(Z_{r,j}:(r,j)<_{\rm lex}(s,i))$. Since
$\mathcal N_{s,i}\subseteq
\mathcal H_s\vee\sigma(\zeta_s^1,\dots,\zeta_s^{i-1})$, conditional
independence and $\EE[\zeta_s^i\mid\mathcal H_s]=0$ imply
\[
    \EE[Z_{s,i}\mid\mathcal N_{s,i}]
    =
    \EE\!\left[
    \EE[Z_{s,i}\mid \mathcal H_s\vee\sigma(\zeta_s^1,\dots,\zeta_s^{i-1})]
    \mid \mathcal N_{s,i}\right]
    =0.
\]
Therefore, by the definition of $\tau_\star$ in Eq.~\eqref{def:factor}, we have
\begin{equation}
\label{eq:E-sum-as-zetas}
\EE\Bigl\|\sum_{s=0}^{t-1} a_s\bar\zeta_s\Bigr\|_\star
=
\EE\Bigl\|\sum_{s=0}^{t-1}\sum_{i=1}^B \frac{a_s}{B}\zeta_s^i\Bigr\|_\star \le
\frac{\tau_\star}{B}
\EE\left[
\left(
\sum_{s=0}^{t-1}\sum_{i=1}^B a_s^p\|\zeta_s^i\|_\star^p
\right)^{\frac{1}{p}}
\right].
\end{equation}
It remains to bound the last expectation. For any $k$ and any nonnegative
$\mathcal H_k$-measurable random variable $U$, we have
\[
\begin{aligned}
\EE\left[
\left.
\left(U+a_k^p\sum_{i=1}^B\|\zeta_k^i\|_\star^p\right)^{\frac1p}
\right|\mathcal H_k
\right]
&\le
\left(
U+a_k^p\sum_{i=1}^B
\EE[\|\zeta_k^i\|_\star^p\mid\mathcal H_k]
\right)^{\frac1p} \\
&\le
\left(U+B a_k^p\sigma_0^p+B a_k^p\sigma_1^p\Gamma_k^p\right)^{\frac1p}\\
&\le
\left(U+B a_k^p\sigma_0^p\right)^{\frac1p}
+B^{\frac1p}a_k\sigma_1\Gamma_k.
\end{aligned}
\]
Here the first inequality follows from the concavity of $x\mapsto x^{1/p}$ and
Jensen's inequality, the second uses the conditional moment assumption, and the
third uses $(a+b)^{1/p}\le a^{1/p}+b^{1/p}$ for $a,b\ge 0$. Applying this
inequality successively for $k=t-1,t-2,\dots,0$ gives
\[
\EE\left[
\left(
\sum_{s=0}^{t-1}\sum_{i=1}^B a_s^p\|\zeta_s^i\|_\star^p
\right)^{\frac{1}{p}}
\right]
\le
B^{\frac{1}{p}}\sigma_0
\Bigl(\sum_{s=0}^{t-1} a_s^p\Bigr)^{\frac{1}{p}}
+
B^{\frac{1}{p}}\sigma_1
\sum_{s=0}^{t-1} a_s\,\EE\Gamma_s.
\]
Substituting the above into Eq.~\eqref{eq:E-sum-as-zetas} yields the desired result.
\end{proof}

\begin{lemma}\label{lem:gen-batch-est}
Suppose that Assumptions~\ref{assumption:smooth} and~\ref{assumption:noise}
hold, and let $\tau_\star:=\tau(\|\cdot\|_\star,m,n,p)$. In
Algorithm~\ref{algorithm:batched-uSCG}, if $\beta_t\equiv\beta\in[0,1)$ and
$\eta_t\equiv \eta\leq \frac{1}{L_1}$, then for every $t=0,\ldots,T-1$, we have
\[
\begin{aligned}
\EE\|m_{t+1}-\nabla F(X_t)\|_\star
&\le
\frac{\tau_\star}{B^{\frac{p-1}{p}}}
\Bigl[
\beta^t\bigl(\sigma_0+\sigma_1\|\nabla F(X_0)\|_\star\bigr)
+(1-\beta)^{\frac{p-1}{p}}\sigma_0
\Bigr] \\
&\quad
+\frac{\beta\eta L_0(1-\beta^t)}{1-\beta}
+\eta L_1\sum_{s=0}^{t-1} \beta^{\,t-s}\,\EE\|\nabla F(X_s)\|_\star \\
&\quad
+\frac{\tau_\star\sigma_1}{B^{\frac{p-1}{p}}}
\sum_{s=1}^{t} (1-\beta)\beta^{\,t-s}\,\EE\|\nabla F(X_s)\|_\star.
\end{aligned}
\]
\end{lemma}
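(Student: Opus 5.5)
Unrolling the momentum recursion gives a closed form for the error $\epsilon_{t+1}:=m_{t+1}-\nabla F(X_t)$, which splits into a martingale noise part and a deterministic drift part. With $\bar\zeta_s:=\bar G_s-\nabla F(X_s)$ and $m_1=\bar G_0$, the update $m_{s+1}=\beta m_s+(1-\beta)\bar G_s$ gives
\[
m_{t+1}=\beta^t\bar G_0+\sum_{s=1}^t(1-\beta)\beta^{t-s}\bar G_s ,
\]
and the weights $\beta^t+\sum_{s=1}^t(1-\beta)\beta^{t-s}$ sum to one. Hence
\[
\epsilon_{t+1}=\beta^t\bar\zeta_0+\sum_{s=1}^t(1-\beta)\beta^{t-s}\bar\zeta_s+\sum_{s=0}^{t-1}\beta^{t-s}\bigl(\nabla F(X_s)-\nabla F(X_{s+1})\bigr).
\]
To verify the drift term, I will rewrite $\nabla F(X_s)-\nabla F(X_t)=\sum_{r=s}^{t-1}(\nabla F(X_r)-\nabla F(X_{r+1}))$ and swap the sums. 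Each $r$ then receives total weight $\beta^t+\sum_{s=1}^{r}(1-\beta)\beta^{t-s}=\beta^{t-r}$. Equivalently, I can derive the recursion $\epsilon_{t+1}=\beta\epsilon_t+\beta(\nabla F(X_{t-1})-\nabla F(X_t))+(1-\beta)\bar\zeta_t$ and unroll it.

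\textbf{Drift part.} Since $\|X_{r+1}-X_r\|=\eta\|\operatorname{lmo}(m_{r+1})\|\le\eta\le 1/L_1$, Assumption~\ref{assumption:smooth} applies to each increment:
\[
\|\nabla F(X_{r+1})-\nabla F(X_r)\|_\star\le\eta\bigl(L_0+L_1\|\nabla F(X_r)\|_\star\bigr).
\]
Summing with weights $\beta^{t-r}$ for $r=0,\dots,t-1$ produces $\eta L_0\sum_{r<t}\beta^{t-r}=\beta\eta L_0(1-\beta^t)/(1-\beta)$. It also produces the term $\eta L_1\sum_{s<t}\beta^{t-s}\EE\|\nabla F(X_s)\|_\star$, which matches the statement. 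No expectation issue arises here, since the bound is pathwise.

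\textbf{Noise part.} I apply Lemma~\ref{lem:weighted-batched-noise} with $\zeta_s^i:=G(X_s,\xi_s^i)-\nabla F(X_s)$, $\Gamma_s:=\|\nabla F(X_s)\|_\star$ and deterministic weights $a_0=\beta^t$, $a_s=(1-\beta)\beta^{t-s}$ for $1\le s\le t$. The conditions to check are as follows. $X_s$ is measurable with respect to past batches and $X_{s-1}$ (and $m_s$, which is itself a function of past batches), so it is $\mathcal H_s$-measurable. Conditional independence and the moment bound come from Assumption~\ref{assumption:noise} and i.i.d.\ sampling. This gives
\[
\frac{\tau_\star}{B^{\frac{p-1}{p}}}\Bigl[\sigma_0\bigl(\textstyle\sum_s a_s^p\bigr)^{1/p}+\sigma_1\textstyle\sum_s a_s\EE\Gamma_s\Bigr].
\]
Next I bound $(\sum_s a_s^p)^{1/p}\le\beta^t+(1-\beta)\bigl(\sum_{k\ge0}\beta^{kp}\bigr)^{1/p}$. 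Using $1-\beta^p\ge1-\beta$, this is at most $\beta^t+(1-\beta)^{1-1/p}$. Finally, I split the $\sigma_1$ sum into the $s=0$ term, which gives $\beta^t\sigma_1\|\nabla F(X_0)\|_\star$ because $X_0$ is deterministic, and the terms $s\ge1$, which give the last line of the statement.

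\textbf{Main obstacle.} The delicate step is the exact bookkeeping of the weights in the unrolling. The initialization $m_1=\bar G_0$ carries weight $\beta^t$ rather than $(1-\beta)\beta^t$, and the drift coefficients must come out as $\beta^{t-s}$, so this has to be checked carefully. The filtration check for Lemma~\ref{lem:weighted-batched-noise} is routine.
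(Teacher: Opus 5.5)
Your proposal is correct and follows the paper's proof: the same unrolled error decomposition, the same pathwise drift bound via Assumption~\ref{assumption:smooth} with $\|X_{s+1}-X_s\|\le\eta\le 1/L_1$, and Lemma~\ref{lem:weighted-batched-noise} for the noise. The only cosmetic difference is that you apply that lemma once to the combined sum with $a_0=\beta^t$ and then split $(\sum_s a_s^p)^{1/p}$ by subadditivity. The paper instead first separates $\beta^t\bar\zeta_0$ by the triangle inequality and applies the lemma twice (once with an index shift), and both routes give the identical bound.
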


\begin{proof}
Let $\zeta_s^i:=G(X_s,\xi_s^i)-\nabla F(X_s)$ and
$\bar\zeta_s:=\bar G_s-\nabla F(X_s)$. By
Assumption~\ref{assumption:noise} and the fresh i.i.d. mini-batches at each
query point $X_s$,
Lemma~\ref{lem:weighted-batched-noise} applies with
$\Gamma_s=\|\nabla F(X_s)\|_\star$. Also,
$m_1-\nabla F(X_0)=\bar\zeta_0$. For $t\ge 1$,
\[
m_{t+1}-\nabla F(X_t)
=
\beta(m_t-\nabla F(X_{t-1}))
+\beta\bigl(\nabla F(X_{t-1})-\nabla F(X_t)\bigr)
+(1-\beta)\bar\zeta_t.
\]
Iterating, we get
\[
m_{t+1}-\nabla F(X_t)
=
\beta^t\bar\zeta_0
+\sum_{s=0}^{t-1} \beta^{\,t-s}\bigl(\nabla F(X_s)-\nabla F(X_{s+1})\bigr)
+\sum_{s=1}^{t} (1-\beta)\beta^{\,t-s}\bar\zeta_s,
\]
where the sums are empty when $t=0$.
Using Assumption~\ref{assumption:smooth}, we have
$\|\nabla F(X_s)-\nabla F(X_{s+1})\|_\star
\le \eta(L_0+L_1\|\nabla F(X_s)\|_\star)$. Therefore,
\begin{align}
\label{eq:E-norm-mt-gt}
\EE\|m_{t+1}-\nabla F(X_t)\|_\star
&\le
\beta^t\,\EE\|\bar\zeta_0\|_\star
+\sum_{s=0}^{t-1} \beta^{\,t-s}\eta\bigl(L_0+L_1\EE\|\nabla F(X_s)\|_\star\bigr) \notag\\
&\quad+
\EE\Bigl\|\sum_{s=1}^{t} (1-\beta)\beta^{\,t-s}\bar\zeta_s\Bigr\|_\star.
\end{align}
It remains to control the batched noise terms. Taking $t=1$ and $a_0=1$ in
Lemma~\ref{lem:weighted-batched-noise} gives
\[
\EE\|\bar\zeta_0\|_\star
\le
\frac{\tau_\star}{B^{\frac{p-1}{p}}}
\bigl(\sigma_0+\sigma_1\|\nabla F(X_0)\|_\star\bigr).
\]
For $t\ge 1$, applying Lemma~\ref{lem:weighted-batched-noise} after shifting
the index set to the batches $1,\dots,t$ with weights
$a_{s-1}=(1-\beta)\beta^{\,t-s}$ gives
\[
\begin{aligned}
&\EE\Bigl\|\sum_{s=1}^{t} (1-\beta)\beta^{\,t-s}\bar\zeta_s\Bigr\|_\star\\
&\le
\frac{\tau_\star}{B^{\frac{p-1}{p}}}
\left[
\sigma_0(1-\beta)\left(\sum_{s=1}^{t}\beta^{p(t-s)}\right)^{\frac1p}
+\sigma_1\sum_{s=1}^{t} (1-\beta)\beta^{\,t-s}\EE\|\nabla F(X_s)\|_\star
\right] \\
&\le
\frac{\tau_\star}{B^{\frac{p-1}{p}}}
\left[
\sigma_0(1-\beta)^{\frac{p-1}{p}}
+\sigma_1\sum_{s=1}^{t} (1-\beta)\beta^{\,t-s}\EE\|\nabla F(X_s)\|_\star
\right].
\end{aligned}
\]
Combining the above two inequalities with Eq.~\eqref{eq:E-norm-mt-gt} yields the desired
result.
\end{proof}

\emph{Proof of Theorem~\ref{thm:nonconvex-smooth}.}
Let $\mathcal A_T:=\EE\|\nabla F(\widetilde X_T)\|_\star$,
$r=\frac{p-1}{p}$, $S_0:=\sigma_0+\sigma_1\|\nabla F(X_0)\|_\star$. Since
$\langle m_{t+1},\operatorname{lmo}(m_{t+1})\rangle=-\|m_{t+1}\|_\star$ and
$\|\operatorname{lmo}(m_{t+1})\|\le1$, we have
\[
\langle \nabla F(X_t),\operatorname{lmo}(m_{t+1})\rangle
\leq -\|\nabla F(X_t)\|_\star+2\|m_{t+1}-\nabla F(X_t)\|_\star .
\]
Using Proposition~\ref{prop:generalized-smooth} and $\eta\le1/L_1$, we obtain
\[
F(X_{t+1})
\le
F(X_t)-\eta\|\nabla F(X_t)\|_\star
+2\eta\|m_{t+1}-\nabla F(X_t)\|_\star
+\frac{\eta^2}{2}\bigl(L_0+L_1\|\nabla F(X_t)\|_\star\bigr).
\]
Summing over $t=0,\ldots,T-1$ gives
\begin{equation}
\label{eq:avg-descent-lemma}
\left(1-\frac{L_1\eta}{2}\right)\mathcal A_T
\le
\frac{\Delta_0}{\eta T}
+\frac{L_0\eta}{2}
+\frac2T\sum_{t=0}^{T-1}\EE\|m_{t+1}-\nabla F(X_t)\|_\star .
\end{equation}

Summing Lemma~\ref{lem:gen-batch-est} over $t=0,\ldots,T-1$ and using
\[
\sum_{t=0}^{T-1}\beta^t\le\frac1\alpha,
\qquad
\sum_{t=0}^{T-1}\sum_{s=0}^{t-1}\beta^{t-s}\le\frac{T}{\alpha},
\qquad
\sum_{t=0}^{T-1}\sum_{s=0}^{t-1}
\beta^{t-s}\EE\|\nabla F(X_s)\|_\star
\le \frac{T}{\alpha}\mathcal A_T,
\]
and
\[
\sum_{t=0}^{T-1}\sum_{s=1}^{t}
\alpha\beta^{t-s}\EE\|\nabla F(X_s)\|_\star
\le T\mathcal A_T,
\]
we obtain
\[
\begin{aligned}
\frac1T\sum_{t=0}^{T-1}\EE\|m_{t+1}-\nabla F(X_t)\|_\star
&\le
\frac{\tau_\star S_0}{B^r\alpha T}
+\frac{\tau_\star\sigma_0\alpha^r}{B^r}
+\frac{L_0\eta}{\alpha}
+\left(\frac{L_1\eta}{\alpha}+\frac{\tau_\star\sigma_1}{B^r}\right)\mathcal A_T .
\end{aligned}
\]
Combining this with Eq.~\eqref{eq:avg-descent-lemma} yields
\begin{equation}
\label{eq:unknown-p-key}
\begin{aligned}
\left(1-\frac{L_1\eta}{2}-\frac{2L_1\eta}{\alpha}-\frac{2\tau_\star\sigma_1}{B^r}\right)\mathcal A_T
&\le
\frac{\Delta_0}{\eta T}
+\frac{L_0\eta}{2}
+\frac{2\tau_\star S_0}{B^r\alpha T}
+\frac{2\tau_\star\sigma_0\alpha^r}{B^r}
+\frac{2L_0\eta}{\alpha}.
\end{aligned}
\end{equation}
By the choices of $B$ and $\eta$,
$2\tau_\star\sigma_1/B^r\le1/8$, $2L_1\eta/\alpha\le1/4$, and $L_1\eta/2\le1/16$.
Hence the left coefficient in Eq.~\eqref{eq:unknown-p-key} is at least $9/16$, and therefore
\[
\mathcal A_T
\le
\frac{40}{9}\left[
\frac{\Delta_0}{\eta T}
+\frac{L_0\eta}{\alpha}
+\frac{\tau_\star S_0}{B^r\alpha T}
+\frac{\tau_\star\sigma_0\alpha^r}{B^r}
\right].
\]
Since $\frac{\Delta_0}{\eta T}\le\frac{8\Delta_0L_1}{\alpha T}+\sqrt{\frac{\Delta_0L_0}{\alpha T}}$ and $\frac{L_0\eta}{\alpha}\le\sqrt{\frac{\Delta_0L_0}{\alpha T}}$, we have
\begin{equation}
\label{eq:sharp-buscg-reduced}
\mathcal A_T
\le
\frac{320}{9}\left[
\frac{A_0}{\alpha T}
+
\sqrt{\frac{\Delta_0L_0}{\alpha T}}
+
\frac{\tau_\star\sigma_0\alpha^r}{B^r}
\right].
\end{equation}

Let $U_\tau =\tfrac{A_0^{\frac{p}{2p-1}}B^{\frac{p-1}{2p-1}}}{(\tau_\star\sigma_0T)^{\frac{p}{2p-1}}}$ and $V_\tau=\tfrac{(\Delta_0L_0)^{\frac{p}{3p-2}}B^{\frac{2p-2}{3p-2}}}{(\tau_\star\sigma_0)^{\frac{2p}{3p-2}}T^{\frac{p}{3p-2}}}$. If $\alpha=1$, then either $U_\tau\ge1$ or $V_\tau\ge1$. If $U_\tau\geq 1$, then $\tau_\star\sigma_0/B^r\le A_0/T$. If $V_\tau\geq 1$, then $\tau_\star\sigma_0/B^r\le\sqrt{\Delta_0L_0/T}$. This implies the desired result.
It remains to consider $\alpha<1$. Then $\alpha\ge U_\tau$, $\alpha\geq V_\tau$, and $\alpha^r\le U_\tau^r+V_\tau^r$. Hence, we have
\[
\frac{A_0}{\alpha T}
\le
\frac{A_0^{\frac{r}{1+r}}(\tau_\star\sigma_0)^{\frac1{1+r}}}{(BT)^{\frac{r}{1+r}}},
\quad
\sqrt{\frac{\Delta_0L_0}{\alpha T}}
\le
\frac{(\Delta_0L_0)^{\frac{r}{1+2r}}(\tau_\star\sigma_0)^{\frac1{1+2r}}}{(BT)^{\frac{r}{1+2r}}},
\]
and
\[
\frac{\tau_\star\sigma_0\alpha^r}{B^r}
\le
\frac{A_0^{\frac{r}{1+r}}(\tau_\star\sigma_0)^{\frac1{1+r}}}{(BT)^{\frac{r}{1+r}}}
+
\frac{(\Delta_0L_0)^{\frac{r}{1+2r}}(\tau_\star\sigma_0)^{\frac1{1+2r}}}{(BT)^{\frac{r}{1+2r}}}.
\]
Substituting these bounds into Eq.~\eqref{eq:sharp-buscg-reduced} proves the desired result. The oracle complexity follows by setting $N=BT$ and solving the leading term
$\frac{(\Delta_0L_0)^{\frac{r}{1+2r}}(\tau_\star\sigma_0)^{\frac1{1+2r}}}{N^{\frac{r}{1+2r}}}\le\epsilon$, which gives $$N=O(\Delta_0L_0(\tau_\star\sigma_0)^{1/r}\epsilon^{-(1+2r)/r}),$$ equivalently the stated bound. \hfill$\square$

\emph{Proof of Theorem~\ref{thm:unknown-p-nonconvex}.}
Letting $\sigma_1=0$ in Eq.~\eqref{eq:unknown-p-key}, we obtain
\begin{equation*}
\begin{aligned}
\left(
1-\frac{L_1\eta}{2}
-\frac{2L_1\eta}{\alpha}
\right)\mathcal{A}_T
&\le
\frac{\Delta_0}{\eta T}
+\frac{L_0\eta}{2}
+\frac{2\tau_\star\sigma_0}{\alpha T}
+{2\tau_\star\sigma_0\alpha^{\frac{p-1}{p}}}
+\frac{2L_0\eta}{\alpha}.
\end{aligned}
\end{equation*}
This implies
\begin{equation}
\label{eq:unknown-p-master}
\mathcal A_T
\le
\frac{16}{11}
\left[
\frac{\Delta_0}{\eta T}
+\frac{L_0\eta}{2}
+\frac{2\tau_\star\sigma_0}{\alpha T}
+
2\tau_\star\sigma_0\alpha^{\frac{p-1}{p}}
+
\frac{2L_0\eta}{\alpha}
\right],
\end{equation}
which yields the desired result. The complexity bound follows immediately. \hfill$\square$

\subsection{Highly smooth and nonconvex problems}

\emph{Proof of Theorem~\ref{thm:transported-uSCG}.}
Let $r:=\frac{p-1}{p}$, $\alpha:=1-\beta$,
$\tau_\star:=\tau(\|\cdot\|_\star,m,n,p)$,
$S_0:=\sigma_0+\sigma_1\|\nabla F(X_0)\|_\star$, and
$\Delta_0:=F(X_0)-F^\star$. Define
$\mathcal A_T:=\mathbb E\|\nabla F(\widetilde X_T)\|_\star$.
Since Algorithm~\ref{algorithm:transported-uSCG} uses
$X_{t+1}=X_t+\eta\operatorname{lmo}(m_{t+1})$, the proof of
Eq.~\eqref{eq:avg-descent-lemma} gives
\begin{equation}
\label{eq:avg-descent-lemma-2}
\left(1-\frac{L_1\eta}{2}\right)\mathcal A_T
\le
\frac{\Delta_0}{\eta T}
+\frac{L_0\eta}{2}
+\frac{2}{T}\sum_{t=0}^{T-1} \mathbb E\|m_{t+1}-\nabla F(X_t)\|_\star.
\end{equation}
We next bound the fresh estimator error. Let
$g_t:=\nabla F(X_t)$, $\bar\zeta_0:=\bar G_0-g_0$, and, for $t\ge1$,
$\bar\zeta_t:=\bar G_t-\nabla F(Y_t)$. Set
$\zeta_0^i:=G(X_0,\xi_0^i)-\nabla F(X_0)$ and, for $s\ge1$,
$\zeta_s^i:=G(Y_s,\xi_s^i)-\nabla F(Y_s)$. Then
Lemma~\ref{lem:weighted-batched-noise} applies with
$\Gamma_0=\|\nabla F(X_0)\|_\star$ and
$\Gamma_s=\|\nabla F(Y_s)\|_\star$ for $s\ge1$, by
Assumption~\ref{assumption:noise} and the fresh i.i.d. mini-batches at the
query points $X_0,Y_1,\dots,Y_{T-1}$. For $t\ge1$,
\[
\begin{aligned}
m_{t+1}-g_t
&=
\beta (m_t-g_{t-1})
+\beta(g_{t-1}-g_t)
+\alpha\bigl(\nabla F(Y_t)-g_t\bigr)
+\alpha\bar\zeta_t.
\end{aligned}
\]
Let $Z(a,b):=\nabla F(a)-\nabla F(b)-\nabla^2F(b)[a-b]$. Since
$Y_t-X_t=\frac{\beta}{\alpha}(X_t-X_{t-1})$, the linear Hessian terms in
$\beta(g_{t-1}-g_t)+\alpha(\nabla F(Y_t)-g_t)$ cancel, and therefore
\[
m_{t+1}-g_t
=
\beta (m_t-g_{t-1})
+\beta Z(X_{t-1},X_t)
+\alpha Z(Y_t,X_t)
+\alpha\bar\zeta_t.
\]
Unrolling the recursion gives, for $t=0,\dots,T-1$,
\begin{equation}
\label{eq:transported-generalized-unroll}
\begin{aligned}
m_{t+1}-g_t
&=
\beta^t\bar\zeta_0
+\sum_{s=1}^t \beta^{\,t-s}
\bigl[\beta Z(X_{s-1},X_s)+\alpha Z(Y_s,X_s)\bigr]
+\sum_{s=1}^t \alpha\beta^{\,t-s}\bar\zeta_s.
\end{aligned}
\end{equation}

We bound the three terms on the right-hand side. First, Lemma~\ref{lem:weighted-batched-noise} gives
\[
\mathbb E\|\bar\zeta_0\|_\star
\le
\frac{\tau_\star}{B^r}
\bigl(\sigma_0+\sigma_1\|\nabla F(X_0)\|_\star\bigr).
\]

Second, for the curvature term, Assumption~\ref{assumption:hesslip} and
$\eta\le\alpha/(8L_1)$ give
$\|Z(X_{s-1},X_s)\|_\star\le L_2\eta^2$ and
$\|Z(Y_s,X_s)\|_\star\le L_2\eta^2/\alpha^2$. Hence
\[
\|\beta Z(X_{s-1},X_s)+\alpha Z(Y_s,X_s)\|_\star
\le
\beta L_2\eta^2+\alpha L_2\frac{\eta^2}{\alpha^2}
\le
\frac{2L_2\eta^2}{\alpha}.
\]
Thus
\[
\sum_{s=1}^t \beta^{\,t-s}
\mathbb E\|\beta Z(X_{s-1},X_s)+\alpha Z(Y_s,X_s)\|_\star
\le
\frac{2L_2\eta^2}{\alpha^2}.
\]

Third, Lemma~\ref{lem:weighted-batched-noise} yields
\[
\mathbb E\Bigl\|\sum_{s=1}^t \alpha\beta^{t-s}\bar\zeta_s\Bigr\|_\star
\le
\frac{\tau_\star}{B^r}
\left[
\sigma_0\alpha^r
+
\sigma_1\sum_{s=1}^t \alpha\beta^{\,t-s}\,\mathbb E\|\nabla F(Y_s)\|_\star
\right].
\]

We next compare $\nabla F(Y_s)$ to $\nabla F(X_s)$. Since
$\|Y_s-X_s\|\le \eta/\alpha\le 1/L_1$, Assumption~\ref{assumption:smooth} gives
\[
\|\nabla F(Y_s)-\nabla F(X_s)\|_\star
\le
\bigl(L_0+L_1\|\nabla F(X_s)\|_\star\bigr)\|Y_s-X_s\|
\le
\frac{L_0\eta}{\alpha}
+
\frac{L_1\eta}{\alpha}\|\nabla F(X_s)\|_\star.
\]
It follows that
\[
\|\nabla F(Y_s)\|_\star
\le
\Bigl(1+\frac{L_1\eta}{\alpha}\Bigr)\|\nabla F(X_s)\|_\star
+
\frac{L_0\eta}{\alpha}.
\]
Combining the bounds for the three terms in
Eq.~\eqref{eq:transported-generalized-unroll} and averaging over
$t=0,\dots,T-1$, we get
\begin{equation}
\label{eq:transported-generalized-error-avg}
\begin{aligned}
\frac1T\sum_{t=0}^{T-1} \mathbb E\|m_{t+1}-\nabla F(X_t)\|_\star
&\le
\frac{\tau_\star S_0}{B^r\alpha T}
+\frac{\tau_\star\sigma_0\alpha^r}{B^r}
+\frac{\tau_\star\sigma_1L_0\eta}{B^r\alpha}
+\frac{2L_2\eta^2}{\alpha^2}
+\frac{\tau_\star\sigma_1}{B^r}
\Bigl(1+\frac{L_1\eta}{\alpha}\Bigr)\mathcal A_T.
\end{aligned}
\end{equation}
Here we used
$\sum_{t=0}^{T-1}\sum_{s=1}^t
\alpha\beta^{t-s}\mathbb E\|\nabla F(X_s)\|_\star\le T\mathcal A_T$.

Substituting \eqref{eq:transported-generalized-error-avg} into \eqref{eq:avg-descent-lemma-2} yields
\[
\begin{aligned}
&\left(
1-\frac{L_1\eta}{2}
-\frac{2\tau_\star\sigma_1}{B^r}\Bigl(1+\frac{L_1\eta}{\alpha}\Bigr)
\right)\mathcal A_T
\\
&\le
\frac{\Delta_0}{\eta T}
+\frac{L_0\eta}{2}
+\frac{2\tau_\star S_0}{B^r\alpha T}
+\frac{2\tau_\star\sigma_0\alpha^r}{B^r}
+\frac{2\tau_\star\sigma_1L_0\eta}{B^r\alpha}
+\frac{4L_2\eta^2}{\alpha^2}.
\end{aligned}
\]

By $\eta\le \alpha/(8L_1)$ and $B\ge(8\tau_\star\sigma_1)^{1/r}$, the
left-hand coefficient is at least
$1-\frac1{16}-\frac14(1+\frac18)=\frac{21}{32}$.
Therefore, we have
\begin{equation}
\label{eq:theorem-transported-uSCG-key}
\begin{aligned}
\mathcal A_T
&\le
\frac{32}{21}
\Biggl[
\frac{\Delta_0}{\eta T}
+\frac{L_0\eta}{2}
+\frac{2\tau_\star S_0}{B^r\alpha T}
+\frac{2\tau_\star\sigma_0\alpha^r}{B^r}
+\frac{2\tau_\star\sigma_1L_0\eta}{B^r\alpha}
+\frac{4L_2\eta^2}{\alpha^2}
\Biggr].
\end{aligned}
\end{equation}
This implies the stated bound.

It remains to prove the sample complexity bound. For sufficiently small
\(\epsilon>0\), choose the parameters in the theorem and $T=20\lceil \frac{\Delta_0}{\eta\epsilon}\rceil$. The choice of \(B\)
then satisfies \(B^r\ge 8\tau_\star\sigma_1\), and the left-hand coefficient
above is at least \(7/8\). Hence Eq.~\eqref{eq:theorem-transported-uSCG-key}
holds with the prefactor \(8/7\) in place of \(32/21\). Moreover,
\(\alpha\le1\) and \(\eta\le\alpha/(8L_1)\). By construction,
\[
\frac{\Delta_0}{\eta T}\le\frac{\epsilon}{20},\qquad
\frac{2\tau_\star\sigma_0\alpha^r}{B^r}\leq \frac{2\epsilon}{3},
\qquad
\frac{4L_2\eta^2}{\alpha^2}\leq 0.01\,\epsilon,
\qquad \frac{2\tau_\star\sigma_1L_0\eta}{B^r\alpha}
\le 0.1\,\epsilon.
\]
Also, since \(B^r=O(\epsilon^{-1/2})\), we have
\(\alpha=O(\epsilon^{1/(2r)})\), and hence \(L_0\eta=o(\epsilon)\).
Similarly, \(\frac{2\tau_\star S_0}{B^r\alpha T}=o(\epsilon)\). Therefore
\(\mathcal A_T\le\epsilon\) for all sufficiently small \(\epsilon\).
It remains to count oracle calls. Letting \(N=BT\), we have
\[
N\le 20B+20\frac{B\Delta_0}{\eta\epsilon}
=
20B+400\frac{B\Delta_0\sqrt{L_2}}{\alpha\epsilon^{3/2}} .
\]
Using
\(\alpha=B\left(\epsilon/(3\tau_\star\sigma_0)\right)^{1/r}\), we get
\(B/\alpha=3^{1/r}(\tau_\star\sigma_0/\epsilon)^{1/r}\). Absorbing the
lower-order terms, we obtain
\[
N
=
O\left(
3^{1/r}
\Delta_0\sqrt{L_2}
(\tau_\star\sigma_0)^{1/r}
\epsilon^{-3/2-1/r}
\right).
\]
Since $r=\frac{p-1}{p}$, this proves the desired sample complexity bound. \hfill$\square$

\emph{Proof of Theorem~\ref{thm:transported-unknown-p}.}
Let $r=\frac{p-1}{p}$ and $\alpha=1-\beta$. With $\sigma_1=0$, the key
inequality preceding Eq.~\eqref{eq:theorem-transported-uSCG-key} and
$\eta\leq\frac{\alpha}{8L_1}$ imply
\[
    \mathcal A_T
    \le
    2
    \left[
    \frac{\Delta_0}{\eta T}
    +\frac{L_0\eta}{2}
    +\frac{2\tau_\star\sigma_0}{B^r\alpha T}
    +\frac{2\tau_\star\sigma_0\alpha^r}{B^r}
    +\frac{4L_2\eta^2}{\alpha^2}
    \right].
\]
By the definition of $\eta$, $\frac1\eta\le T^{5/7}+8L_1T^{4/7}$, so we have $\frac{\Delta_0}{\eta T}\le\frac{\Delta_0}{T^{2/7}}+\frac{8\Delta_0L_1}{T^{3/7}}$.
Moreover,
\[
    \frac{L_0\eta}{2}\le \frac{L_0}{2T^{5/7}},
    \qquad
    \frac{4L_2\eta^2}{\alpha^2}\le \frac{4L_2}{T^{2/7}},\qquad
    \frac{2\tau_\star\sigma_0}{B^r\alpha T}
    =
    \frac{2\tau_\star\sigma_0}{B^rT^{3/7}},
    \qquad
    \frac{2\tau_\star\sigma_0\alpha^r}{B^r}
    =
    \frac{2\tau_\star\sigma_0}{B^rT^{4r/7}}.
\]
Since $r\le 1/2$, we have $T^{-3/7}\le T^{-4r/7}$ for $T\ge1$. It follows that
$
    \frac{2\tau_\star\sigma_0}{B^rT^{3/7}}
    +
    \frac{2\tau_\star\sigma_0}{B^rT^{4r/7}}
    \le
    \frac{4\tau_\star\sigma_0}{B^rT^{4r/7}}.
$
Combining the above gives the desired inequality and the sample complexity bound follows. \hfill$\square$

\section{Additional Experiments} \label{sec:app-exp}
\paragraph{LLM experimental setup.} All LLM experiments use nanochat~\citep{nanochat} trained on the NVIDIA ClimbMix dataset~\citep{Diao-2025-Nemotron}, with the number of training tokens being specified in Table~\ref{tab:app-model-configs}, the validation split containing 42M tokens, and the sequence length being 2048 for all models. The model backbone follows the nanochat GPT implementation and includes rotary position embeddings~\citep{Su-2024-Roformer}, RMSNorm~\citep{Zhang-2019-Root}, QK normalization~\citep{Henry-2020-Query}, and local sliding-window attention~\citep{Beltagy-2020-Longformer} implemented with FlashAttention2~\citep{Dao-2024-Flashattention}. The model also uses untied token embeddings and language-model head, ReLU-squared MLPs, and value embeddings~\citep{Zhou-2025-Value}. We use a vocabulary size of 32,768 and the standard nanochat tokenizer \citep{nanochat}. Table~\ref{tab:app-model-configs} lists the model configurations. The required training-token count is computed from the number of scalable parameters, defined as the sum of transformer matrix parameters and language-model head parameters. 

We conducted our experiments using three nodes, each equipped with ten A40 GPUs. One node exhibited slightly slower execution times compared to the two nearly identical nodes. For the 287M and 539M models, we performed multiple runs across all nodes and reported the average runtime for Algorithm~\ref{algorithm:transported-uSCG} as 19.8 and 81.9 minutes, respectively; in comparison, other Muon-family methods ranged from 19.2–19.5 and 80.2–80.7 minutes. For the 1.39B model, the runtime on a faster node was 10.46 hours, compared to 10.26 hours for Muon with heavy-ball and Nesterov momentum.

\begin{table}[!t]
\centering
\caption{Nanochat model configurations.}
\label{tab:app-model-configs}
\resizebox{\linewidth}{!}{%
\begin{tabular}{lccccccccc}
\toprule
Model & Depth & Width & Heads & Params & Scalable params & Ratio & Batch tokens & Steps & Train tokens \\
\midrule
287M  & 12 & 768  & 6  & 287M & 110M & 8 & 491,520 & 1,794 & 882M \\
539M  & 16 & 1024 & 8  & 539M & 235M & 8 & 491,520 & 3,826 & 1.88B \\
1.39B & 24 & 1536 & 12 & 1.39B & 730M & 8 & 983,040 & 5,941 & 5.84B \\
\bottomrule
\end{tabular}%
}
\end{table}
\begin{table}[!t]
\centering
\caption{Learning-rate multipliers for optimizer parameter groups. Learning rate
expressions are peak values before the global warmdown multiplier is applied.}
\label{tab:app-optimizer-groups}
\begin{tabular}{lllcc}
\toprule
Parameter group & Optimizer & Peak learning rate & Betas / momentum & Weight decay \\
\midrule
Transformer matrices & AdamW & $\eta_{\mathrm{mat}}s_Bs_d$ & $(0.9,0.999)$ & scheduled \\
Transformer matrices & Muon & $\eta_{\mathrm{mat}}s_B$ & tuned & scheduled \\
LM head & AdamW & $0.008s_Bs_d$ & $(0.8,0.96)$ & 0.01 \\
Token embeddings & AdamW & $0.3s_Bs_d$ & $(0.8,0.995)$ & 0.001 \\
Value embeddings & AdamW & $0.15s_Bs_d$ & $(0.8,0.995)$ & 0.01 \\
\bottomrule
\end{tabular}%
\end{table}
\begin{table}[!t]
\centering
\caption{Automatic batch-size and width multipliers.}
\label{tab:app-lr-multipliers}
\begin{tabular}{lcc}
\toprule
Model & $s_B$ & $s_d$  \\
\midrule
287M  & 0.9682 & 1.0000  \\
539M  & 0.9682 & 0.8660  \\
1.39B & 1.3693 & 0.7071  \\
\bottomrule
\end{tabular}
\end{table}

\paragraph{LLM layerwise learning-rate multipliers.} Only transformer matrix parameters switch between AdamW and the Muon-family optimizers; all other parameters are trained with AdamW. Let $s_B=\sqrt{B/2^{19}}$ denote the automatic batch-size multiplier, and let $s_d=(d_{\mathrm{model}}/768)^{-1/2}$ denote the width multiplier applied to AdamW. Table~\ref{tab:app-optimizer-groups} reports the learning rates and weight-decay settings for each parameter group, and Table~\ref{tab:app-lr-multipliers} lists the batch-size and width multipliers for each model size. The learning-rate schedule uses a 40-step linear warmup, followed by a constant phase, and then a linear warmdown over the final 65\% of training steps to a final multiplier of 0.05 \citep{Wen-2025-Understanding}. For Muon, weight decay is cosine-decayed to zero. Its initial value is scaled as $\lambda = \lambda_{\mathrm{ref}}\sqrt{B/B_{\mathrm{ref}}}\,(D_{\mathrm{ref}}/D)$, where $\lambda_{\mathrm{ref}}=0.28$ and $B_{\mathrm{ref}}=2^{19}$.

\paragraph{LLM hyperparameters.} We select LLM hyperparameters by grid search using validation loss. For AdamW parameters, we run a single-seed learning-rate sweep on 287M and 539M models. We use AdamW betas $(0.9,0.999)$ and test base learning rates $\{0.0006, 0.0008, 0.0010, 0.0012, 0.0015, 0.0018, 0.0022\}$, before applying the automatic multipliers in Table~\ref{tab:app-lr-multipliers}. The best common learning rate is 0.0012, which we use for the 287M and 539M AdamW rows in Table~\ref{tab:nanochat-optimizer-comparison}. For the 1.39B model, we use an AdamW matrix learning rate of 0.0010, which gives a lower validation loss than 0.0012. For Muon without variance reduction and for NorMuon with heavy-ball momentum, Nesterov momentum, and transportation, we sweep learning rates on the 287M and 539M models to identify the best setting and evaluate the stability of Muon-family methods. We use heavy-ball momentum 0.90 and Nesterov momentum 0.95. The validation losses are reported in Tables~\ref{tab:app-llm-lr-stability-d12-r8} and~\ref{tab:app-llm-lr-stability-d16-r8}. These methods are robust across learning rates, with the best base learning rate typically between 0.02 and 0.03. After selecting the best learning rate, we sweep the heavy-ball and Nesterov momentum factors on the 287M model over $\{0.90,0.91,0.92,0.93,0.94,0.95\}$. On the 539M model, we sweep heavy-ball momentum over $\{0.88,0.89,0.90,0.91,0.92,0.93\}$, and Nesterov momentum over $\{0.91,0.92,0.93,0.94,0.95,0.96\}$. For transported NorMuon, we use the best heavy-ball momentum factors and set $\alpha$ to 0.005, 0.0075, and 0.010 for the 287M, 539M, and 1.39B models, respectively. The 1.39B Muon-family settings are extrapolated from the 539M hyperparameter search rather than tuned with a separate sweep. The hyperparameters are listed in Table~\ref{tab:app-llm-selected-hparams}.
\begin{table}[!t]
\centering
\caption{Validation loss for the 287M Muon-family learning-rate sweep.}
\label{tab:app-llm-lr-stability-d12-r8}
\begin{tabular}{lcccc}
\toprule
Base matrix LR & Muon(N) & NorMuon(N) & NorMuon(H) & NorMuonT(H) \\
\midrule
0.0025 & 2.9911 & 2.9761 & 2.9907 & 2.9914 \\
0.005  & 2.9369 & 2.9216 & 2.9322 & 2.9336 \\
0.010  & 2.8625 & 2.8537 & 2.8563 & 2.8562 \\
0.020  & 2.8542 & 2.8431 & \textbf{2.8409} & \textbf{2.8414} \\
0.030  & \textbf{2.8533} & \textbf{2.8426} & 2.8428 & 2.8429 \\
0.040  & 2.8572 & 2.8469 & 2.8457 & 2.8457 \\
0.050  & 2.8640 & 2.8521 & 2.8514 & 2.8514 \\
\bottomrule
\end{tabular}
\end{table}
\begin{table}[!t]
\centering
\caption{Validation loss for the 539M Muon-family learning-rate sweep.}
\label{tab:app-llm-lr-stability-d16-r8}
\begin{tabular}{lcccc}
\toprule
Base matrix LR & Muon(N) & NorMuon(N) & NorMuon(H) & NorMuonT(H) \\
\midrule
0.0025 & 2.7067 & 2.6981 & 2.7137 & 2.7138 \\
0.005  & 2.6656 & 2.6522 & 2.6594 & 2.6601 \\
0.010  & 2.6456 & 2.6299 & 2.6322 & 2.6320 \\
0.020  & \textbf{2.6360} & \textbf{2.6167} & \textbf{2.6166} & \textbf{2.6165} \\
0.030  & 2.6382 & 2.6218 & 2.6183 & 2.6182 \\
0.040  & 2.6418 & 2.6265 & 2.6219 & 2.6218 \\
0.050  & 2.6384 & 2.6302 & 2.6259 & 2.6258 \\
\bottomrule
\end{tabular}
\end{table}
\begin{table}[!t]
\centering
\caption{Selected nanochat matrix-optimizer hyperparameters. Learning rates are base matrix learning rates before the automatic multipliers in Table~\ref{tab:app-lr-multipliers}.}
\label{tab:app-llm-selected-hparams}
\begin{tabular}{llccc}
\toprule
Model & Optimizer & Base matrix LR & Betas / momentum & Transport $\alpha$ \\
\midrule
287M & AdamW(H) & 0.0012 & $(0.9,0.999)$ & -- \\
287M & Muon(N) & 0.03 & 0.94 & -- \\
287M & NorMuon(N) & 0.03 & 0.94 & -- \\
287M & NorMuon(H) & 0.02 & 0.93 & -- \\
287M & NorMuonT(H) & 0.02 & 0.93 & 0.005 \\
\midrule
539M & AdamW(H) & 0.0012 & $(0.9,0.999)$ & -- \\
539M & Muon(N) & 0.02 & 0.95 & -- \\
539M & NorMuon(N) & 0.02 & 0.95 & -- \\
539M & NorMuon(H) & 0.02 & 0.90 & -- \\
539M & NorMuonT(H) & 0.02 & 0.90 & 0.0075 \\
\midrule
1.39B & AdamW(H) & 0.0010 & $(0.9,0.999)$ & -- \\
1.39B & Muon(N) & 0.02 & 0.95 & -- \\
1.39B & NorMuon(N) & 0.02 & 0.95 & -- \\
1.39B & NorMuon(H) & 0.02 & 0.90 & -- \\
1.39B & NorMuonT(H) & 0.02 & 0.90 & 0.010 \\
\bottomrule
\end{tabular}
\end{table}
\begin{table}[!t]
\centering
\caption{Selected CIFARNET hyperparameters. The auxiliary SGD optimizer is fixed across all rows, with momentum 0.989703, auxiliary learning rate $1.4949{\times}10^{-3}$, and head learning rate 1.72446 before the global schedule multiplier.}
\label{tab:app-cnn-selected-hparams}
\begin{tabular}{llccc}
\toprule
Dataset & Optimizer & Main LR & Momentum & Transport $\alpha$ \\
\midrule
CIFAR-10 & AdamW & 0.003 & -- & -- \\
CIFAR-10 & SGDM & 0.24 & 0.95 & -- \\
CIFAR-10 & Muon(N) & 0.05 & 0.95 & -- \\
CIFAR-10 & NorMuon(N) & 0.10 & 0.80 & -- \\
CIFAR-10 & NorMuon(H) & 0.05 & 0.70 & -- \\
CIFAR-10 & NorMuonT(H) & 0.03 & 0.95 & 0.0005 \\
\midrule
CIFAR-100 & AdamW & 0.003 & -- & -- \\
CIFAR-100 & SGDM & 0.24 & 0.95 & -- \\
CIFAR-100 & Muon(N) & 0.05 & 0.90 & -- \\
CIFAR-100 & NorMuon(N) & 0.05 & 0.95 & -- \\
CIFAR-100 & NorMuon(H) & 0.0430316 & 0.90 & -- \\
CIFAR-100 & NorMuonT(H) & 0.10 & 0.70 & 0.075 \\
\bottomrule
\end{tabular}%
\end{table}
\begin{table}[!t]
\centering
\caption{NorMuon with Nesterov momentum scheduling. The 287M and 539M rows report mean $\pm$ standard error over six seeds; the 1.39B row is single-seed.}
\label{tab:app-scheduled-nesterov}
\begin{tabular}{lcccc}
\toprule
Model  & Val. loss & Last-50 train & CORE  \\
\midrule
287M  & $2.8360\,{\scriptstyle \pm 9e{-}4}$ & $2.8496\,{\scriptstyle \pm 1e{-}3}$ & --  \\
539M   & $2.6108\,{\scriptstyle \pm 2e{-}4}$ & $2.6534\,{\scriptstyle \pm 3e{-}4}$ & --  \\
1.39B &  2.3476 & 2.3580 & 0.2508 \\
\bottomrule
\end{tabular}%
\end{table}

\paragraph{CNN experimental setup.} For the CNN experiments, we use the CIFARNET architecture~\citep{Jordan-2024-Single, Kim-2026-Convergence}. We use CIFAR-10 and CIFAR-100~\citep{Krizhevsky-2009-Learning} with the standard training and validation split. CIFARNET consists of a frozen $2\times 2$ whitening convolution with a trainable bias, followed by three groups of convolutional layers with widths $(64,256,256)$ and a linear classifier. Each group applies a $3\times 3$ convolution, max pooling, BatchNorm, GELU, another $3\times 3$ convolution, BatchNorm, and a final GELU. Only the main $3\times 3$ convolutional filters switch between AdamW, SGD with momentum, and the Muon-family optimizers. In all runs, the whitening bias, BatchNorm biases, and linear head are trained with SGD with Nesterov momentum. Before applying the Muon-family LMO, we flatten the convolutional filters into matrices of shape $d_{\mathrm{out}}\times(d_{\mathrm{in}}k^2)$, where $k$ is the convolution kernel size and $d_{\mathrm{out}}$ and $d_{\mathrm{in}}$ are the numbers of output and input channels. We train for 50 epochs with batch size 512, label smoothing 0.2, and global gradient clipping at 1.0. The learning-rate schedule uses a 5\% linear warmup followed by cosine decay to zero. Data augmentation consists of random horizontal flips and reflection-padded translations of up to two pixels.

\paragraph{CNN hyperparameters.} For CIFARNET, we select hyperparameters by single-seed grid search using validation accuracy, and then report final results over five random seeds. For AdamW, we sweep the learning rate for the convolutional filters over $\{0.0003,0.001,0.003,0.01\}$. For SGD with momentum and the Muon-family methods, we sweep the learning rate for the convolutional filters over $\{0.01,0.02,0.03,0.0430316,0.05,0.075,0.10,0.15,0.20,0.24\}$, and we sweep the momentum factor over $\{0.6,0.7,0.8,0.9,0.95\}$. The value 0.0430316 is the default learning rate from~\citet{Jordan-2024-Single}. The selected hyperparameters are listed in Table~\ref{tab:app-cnn-selected-hparams}.

\paragraph{Additional results.} For the LLM experiments, our results are consistent with prior empirical findings that Nesterov momentum outperforms heavy-ball momentum~\citep{Jordan-2024-Muon, nanochat, Liu-2025-Muon}. In Table~\ref{tab:nanochat-optimizer-comparison}, NorMuon with Nesterov momentum achieves a better CORE metric, which measures downstream performance. Scheduled Nesterov momentum is currently state of the art in nanochat~\citep{nanochat}, but it requires momentum warmup and warmdown. A full comparison among scheduled Nesterov momentum, heavy-ball momentum, and transportation would require tuning three momentum factors together with the warmup duration, which we leave for future work. The scheduled Nesterov configuration in nanochat linearly warms the Muon momentum from 0.85 to 0.97 over the first 400 steps, keeps it at 0.97, and then linearly decays it to 0.90 during the learning-rate warmdown phase, which spans the final 65\% of training. We report these results in Table~\ref{tab:app-scheduled-nesterov}.

\end{document}